\documentclass[11pt,reqno]{amsart}

\usepackage[letterpaper,margin=1in]{geometry}
\usepackage[T1]{fontenc}
\usepackage{newtxtext,newtxmath}
\usepackage{microtype}
\usepackage{amsmath,mathtools}
\usepackage[authoryear,round]{natbib}
\usepackage{enumitem}
\usepackage{needspace}
\usepackage{etoolbox}
\usepackage{xurl}
\usepackage{tikz}
\usetikzlibrary{arrows.meta}
\usepackage[hidelinks]{hyperref}
\usepackage{bookmark}
\usepackage{fancyhdr}

\numberwithin{equation}{section}
\setlist[itemize]{leftmargin=2.05em,itemsep=0.12em,topsep=0.32em}
\setlist[enumerate]{leftmargin=2.15em,itemsep=0.16em,topsep=0.36em}
\allowdisplaybreaks[2]
\providecommand{\tightlist}{\setlength{\itemsep}{0pt}\setlength{\parskip}{0pt}}

\newcommand{\Prob}[1]{\mathbb{P}\!\left(#1\right)}
\newcommand{\Exp}[1]{\mathbb{E}\!\left[#1\right]}

\newcommand{\displayheading}[1]{\par\addvspace{0.68\baselineskip}\noindent{\scshape #1.}\par\nobreak\smallskip\noindent}

\theoremstyle{plain}
\newtheorem*{maintheorem}{Main theorem}
\newtheorem{theorem}{Theorem}[section]
\newtheorem{proposition}[theorem]{Proposition}
\newtheorem{lemma}[theorem]{Lemma}

\theoremstyle{remark}
\newtheorem{remark}[theorem]{Remark}

\BeforeBeginEnvironment{maintheorem}{\Needspace{8\baselineskip}}
\BeforeBeginEnvironment{theorem}{\Needspace{6\baselineskip}}
\BeforeBeginEnvironment{lemma}{\Needspace{6\baselineskip}}
\BeforeBeginEnvironment{proposition}{\Needspace{6\baselineskip}}
\BeforeBeginEnvironment{corollary}{\Needspace{5\baselineskip}}
\BeforeBeginEnvironment{remark}{\Needspace{4\baselineskip}}
\BeforeBeginEnvironment{proof}{\Needspace{4\baselineskip}}

\hypersetup{
  pdftitle={A Full-Sequence Quantitative Gap Between the Chromatic and Cochromatic Numbers of a Random Graph},
  pdfauthor={Samuil Petkov},
  pdfsubject={Random graphs; chromatic and cochromatic numbers},
  pdfkeywords={random graph, chromatic number, cochromatic number, second moment method, configuration model},
  pdfcreator={LaTeX}}

\title[Chromatic and cochromatic numbers]{A Full-Sequence Quantitative Gap
Between the Chromatic and Cochromatic Numbers of a Random Graph}
\author[Samuil Petkov]{Samuil Petkov}
\address{\'Ecole normale sup\'erieure, Universit\'e PSL, Paris, France;
\texttt{samuil.petkov@ens.psl.eu}}
\date{30 August 2026}
\subjclass[2020]{Primary 05C80; Secondary 05C15, 60C05}
\keywords{random graph, chromatic number, cochromatic number, second moment method, configuration model}

\begin{document}

\begin{abstract}
Let $\zeta(G)$ denote the minimum number of parts in a partition of $V(G)$ in
which every part induces either a clique or an independent set. Erd\H{o}s and
Gimbel asked whether, for $G_n\sim G(n,1/2)$, the difference
$\chi(G_n)-\zeta(G_n)$ tends to infinity with high probability. We resolve this
problem along the full sequence $n\to\infty$ and prove that
\[
  \mathbb P\!\left(
    \chi(G_n)-\zeta(G_n)\ge
    \frac{(\log 2)^2}{4}\log\!\left(\frac{200}{153}\right)
    \frac{n}{(\log n)^3}
  \right)\longrightarrow1.
\]
Thus we obtain a lower bound at the conjectured scale $n/(\log n)^3$. We also
prove a phase-resolved refinement: if $\delta_n$ is the fractional part of the
standard independence-number center, then the coefficient above may be
replaced by
\[
  \frac{(\log 2)^2}{4}A_4(\delta_n)-o(1),
\]
where $A_4$ is explicit, continuous, nonconstant, and satisfies
\[
  A_4(\delta)>\log\!\left(\frac{200}{153}\right)
  \qquad\text{for every }\delta\in[0,1].
\]
The proof uses signed cocoloring profiles supported on four consecutive class
sizes and remains uniform across jumps of the natural class-size cutoff. An
exact signed-overlap identity separates local cell rewards from a binary
cycle-space factor. We then decompose every overlap into canonical high cells
and a capped residual matching; an endpoint-table comparison and an injective
restriction of residual even edge sets yield the required second-moment
estimate. Finally, a bounded-differences argument amplifies the resulting rare
signed witness to a high-probability cocoloring.
\end{abstract}

\maketitle
\thispagestyle{fancy}

\section*{Introduction}
\label{sec:introduction-v3}

A \emph{cocoloring} of a graph $G$ is a partition of $V(G)$ into nonempty
classes, each inducing either an edgeless graph or a complete graph. The least
number of classes in such a partition is the \emph{cochromatic number}
$\zeta(G)$. Since an ordinary proper coloring is a cocoloring using only
independent-set classes, one always has $\zeta(G)\le\chi(G)$.

The first-order scales of $\chi(G_n)$ and $\zeta(G_n)$ coincide.  Erd\H{o}s
Problem~625 asks whether the lower-order discrepancy nevertheless grows.  At
the next order, the lower bound proved here depends on a rounding phase that
changes whenever the preferred class size crosses an integer.

Write $[n]:=\{1,\ldots,n\}$, and let $G_n\sim G(n,1/2)$ be the labeled random
graph on $[n]$. All logarithms are natural unless a base is displayed. Erd\H{o}s and
Gimbel asked whether
\[
  \chi(G_n)-\zeta(G_n)\longrightarrow\infty
\]
with high probability \citep[p.~263]{erdos-gimbel-1993}. The question was
restated by Gimbel \citep[Section~7.4]{gimbel-2016} and is cataloged as
Erd\H{o}s Problem~625 \citep{bloom-erdos625}. We obtain the following
quantitative full-sequence statement.

The expected number of independent sets of size $k$ is
\[
  \mu_k=\binom nk2^{-\binom k2}.
\]
The usual asymptotic solution of $\mu_k=1$ is the real-valued center
$\alpha_0(n)$ defined below.  Its fractional part records the position of this
center relative to the neighboring integer class sizes and will be the phase
$\delta_n$.  We include both quantities in the theorem statement.

\begin{maintheorem}[Phase-resolved and uniform forms]
For a finite graph $G$, let $\chi(G)$ be the least number of parts in a
partition of $V(G)$ into independent sets, and let $\zeta(G)$ be the least
number of parts in a partition of $V(G)$ into cliques or independent sets. For each integer
$n\ge2$, put $[n]:=\{1,\ldots,n\}$ and let $G_n$ be the random simple graph on
$[n]$ in which each unordered pair of distinct vertices is present as an edge,
independently, with probability $1/2$.  For each $n$, $\mathbb P$ below denotes
probability under this law.  All logarithms without a displayed base are natural, and
all limits are as $n\to\infty$ through the integers.  Set
\[
  \alpha_0(n):=
  2\log_2 n-2\log_2\log_2 n+2\log_2(\mathrm e/2)+1,
  \qquad
  \delta_n:=\{\alpha_0(n)\}
  =\alpha_0(n)-\lfloor\alpha_0(n)\rfloor.
\]
Thus $\delta_n\in[0,1)$ is deterministic.  For $0\le\delta\le1$, define
\[
  S_+:=\{i\in\mathbb Z:i\ge-1\},\qquad
  S_4:=\{2,3,4,5\},\qquad
  T_\delta:=1+\frac{2}{\log 2}-\delta.
\]
Then $T_\delta\in(2,5)$.  For $S\in\{S_+,S_4\}$ and $T\in(2,5)$, put
\[
  \mathcal F_S(T):=
  \inf_{\lambda\in\mathbb R}
  \left\{
    \log\!\sum_{i\in S}
      \exp\!\left(\lambda i-\frac{\log 2}{2}i^2\right)
    -\lambda T
  \right\},
  \qquad
  A_4(\delta):=
  \log 2-\mathcal F_{S_+}(T_\delta)+\mathcal F_{S_4}(T_\delta).
\]
The series defining $\mathcal F_{S_+}(T)$ converges for every
$\lambda\in\mathbb R$; for $T\in(2,5)$, each displayed infimum is a finite real
and is uniquely attained.
There is a deterministic sequence of nonnegative reals
$(\varepsilon_n)_{n\ge2}$ with $\varepsilon_n\to0$ such that
\[
  \mathbb P\!\left(
    \chi(G_n)-\zeta(G_n)
    \ge
    \left[
      \frac{(\log 2)^2}{4}A_4(\delta_n)-\varepsilon_n
    \right]
    \frac{n}{(\log n)^3}
  \right)
  \longrightarrow1.
\]
Moreover, $A_4$ is continuous and nonconstant on $[0,1]$, and for every
$\delta\in[0,1]$,
\[
  A_4(\delta)>\log\!\left(\frac{200}{153}\right).
\]
Continuity and compactness therefore give a positive uniform margin above
this bound.
Consequently,
\[
  \mathbb P\!\left(
    \chi(G_n)-\zeta(G_n)
    \ge
    \frac{(\log 2)^2}{4}
    \log\!\left(\frac{200}{153}\right)
    \frac{n}{(\log n)^3}
  \right)
  \longrightarrow 1.
\]
\end{maintheorem}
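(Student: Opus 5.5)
The plan is to compare a lower bound on $\chi(G_n)$ with an upper bound on $\zeta(G_n)$, both sharp to second order. For $\chi$, I will use the known sharp concentration of the chromatic number of $G(n,1/2)$ at the scale $n/(\log n)^3$ (Heckel's refinement of Bollob\'as). Equivalently, I will use a first-moment bound on colorings whose class-size profiles are supported on $S_+$ shifted around $\lfloor\alpha_0\rfloor$. Writing class sizes as $k_0+i$, the expected number of colorings with a given profile is governed by $\sum_i x_i(\log\mu_{k_0+i})$. Minimizing this under the constraint $\sum_i x_i(k_0+i)=n$ yields the large-deviation functional $\mathcal F_{S_+}(T_\delta)$. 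Here $T_\delta$ encodes the average class size relative to $k_0$, and the Gaussian weights $\exp(-\tfrac{\log 2}{2}i^2)$ come from the expansion of $\log\mu_k$ near the center. This gives, with high probability,
\[
\chi(G_n)\ge \frac{n}{\alpha_0}+\Bigl[\frac{(\log2)^2}{4}\bigl(c-\mathcal F_{S_+}(T_\delta)\bigr)-o(1)\Bigr]\frac{n}{(\log n)^3}
\]
for an explicit normalising constant $c$.

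For $\zeta$, I will construct cocolorings using classes of both types: independent sets and cliques, each with sizes in $k_0+S_4$. The factor $2$ in the count of admissible class types contributes the $\log 2$ term in $A_4$. The restriction to four consecutive sizes contributes $\mathcal F_{S_4}(T_\delta)$. I will define a \emph{signed} counting variable $X$ that sums over partitions with a prescribed profile, each class being either independent or a clique. Its expectation should match $\exp\{(\log2-\ldots+\mathcal F_{S_4})\,n/(\log n)^3\cdot\text{const}\}$, up to the claimed $A_4$ gain. The core step is a second-moment estimate $\mathbb E[X^2]\le \exp(o(n/(\log n)^3))\,(\mathbb E X)^2$. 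I will expand $\mathbb E X^2$ over overlap patterns of two partitions. A signed-overlap identity should factor the contribution into local cell rewards times a binary cycle-space factor. This factor arises because the edges shared by two classes of opposite types must be simultaneously present and absent, so the signs cancel along cycles. I will then split the overlap matrix into a few large "high" cells and a residual part of small cells. The residual part behaves like a matching and is controlled by a configuration-model style count. Even edge sets in the residual graph are bounded via an injection into restricted sets.

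The second-moment bound only yields $\mathbb P(X>0)\ge \exp(-o(n/(\log n)^3))$, so a cocoloring with the target number of classes exists with non-negligible but possibly tiny probability. I will amplify this with a vertex-exposure martingale. The quantity $\zeta$ restricted to covering all but few vertices is $1$-Lipschitz under changing one vertex's edges. Bounded differences (Azuma) then shows that with high probability one can cover all but $o(n/(\log n)^3)$ vertices in this way. The leftover vertices are finished with singletons or a greedy coloring of negligible cost. Continuity and nonconstancy of $A_4$ follow from the strict convexity and analyticity of the log-partition functions in $\lambda$, via the implicit function theorem. The strict bound $A_4>\log(200/153)$ is a one-variable numerical verification on $[0,1]$ with interval arithmetic and a Lipschitz bound.

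\textbf{Main obstacle.} I expect the second-moment control of the overlap sum to be the hardest step, specifically keeping it uniform across jumps of $\lfloor\alpha_0\rfloor$. Near $\delta\in\{0,1\}$ the profile switches between adjacent windows, and the high-cell/residual decomposition must not lose a constant factor per class.
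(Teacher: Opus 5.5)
Your architecture is the paper's: a first-moment lower bound for $\chi$, a signed four-size witness for $\zeta$, an exact sign count giving local rewards times $2^{\beta(H)}$, and then seed amplification. However, the precision you set for the second moment is one logarithm too weak, and the amplification step fails with it. From $\mathbb E X^2\le e^{\Lambda_n}(\mathbb E X)^2$ you only get $\mathbb P(\zeta(G_n)\le k)\ge e^{-\Lambda_n}$. Vertex-exposure bounded differences applied to $S_k:=\max\{|W|:\zeta(G_n[W])\le k\}$ then leave $n-S_k=O(\sqrt{n\Lambda_n}+\sqrt{nr})$ uncovered vertices, outside an event of probability $e^{-r}$. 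With your $\Lambda_n=o(n/(\log n)^3)$ this is $o(n/(\log n)^{3/2})$ vertices, not $o(n/(\log n)^3)$ as you claim. Finishing them with singletons is therefore hopeless. Even coloring them by independent sets of size $\Theta(\log n)$ costs $O(\sqrt{n\Lambda_n}/\log n)$ extra classes. Under your bound this is only known to be $o(n/(\log n)^{5/2})$, which can exceed the entire gap $\Theta(n/(\log n)^3)$ you want to exhibit. You need $\sqrt{n\Lambda_n}/\log n=o(n/(\log n)^3)$, i.e.\ $\Lambda_n=o(n/(\log n)^4)$, which is exactly what Proposition~\ref{prop:explicit-normalized-second-moment-ledger-v3} proves. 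You also need a bound $\chi(G_n[S])\le C|S|/\log n+n^{1/3}$ valid simultaneously for every $S\subseteq[n]$ (Lemma~\ref{lemma-10.1-simultaneous-leftover-coloring}), because the leftover set depends on the graph.

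Reaching $\Lambda_n=o(n/(\log n)^4)$ is where most of the work lies, and your sketch of the overlap sum does not get there. It is the \emph{high} cells (multiplicity above $\lfloor U/2\rfloor$, with $U$ the largest class size) that automatically form a matching, and there can be as many as $k_{\mathrm{co}}$ of them, not ``a few''. The injection is the restriction of even edge sets to the complement of that matching; the residual is a capped configuration controlled by cell activities of order $\theta_{ab}^2$. Summing over all high skeletons forces you to control every common whole-class subprofile, up to total coincidence, where the weight is $1/\mathbb E X$. The paper does this by proving $\sum_\ell D(\ell)=1+o(1)$ uniformly in the phase (Lemma~\ref{lemma-7.1-all-common-subprofiles}). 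That proof uses a forward recurrence, a negative rate function, and a reverse recurrence that needs $\log\mathbb E X\ge c(\log n)^2$, hence $k_{\mathrm{co}}$ a fixed distance above the signed root. It then compares each endpoint table with these one-sided weights.

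Three smaller points. First, for $\chi$ you need only the first-moment lower bound along the full sequence. Your display centered at $n/\alpha_0$ is not a valid location at this precision, since each root carries corrections of order $n/(\log n)^2$ and $n\log\log n/(\log n)^3$ that cancel only in the difference. So compare the roots directly: evaluate the signed four-size root function at the ordinary root, where it equals $r_+$ times $\log 2$ minus the support loss, and divide by the common slope $(2/\log 2)(\log n)^2$. Second, analyticity alone does not give nonconstancy of $A_4$. You must exhibit a $\delta$ with $A_4'(\delta)=\lambda_{S_4}(T_\delta)-\lambda_{S_+}(T_\delta)\neq0$.
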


The uniform coefficient displayed above is
\[
  \frac{(\log 2)^2}{4}
  \log\!\left(\frac{200}{153}\right)
  =0.032175871697936\ldots.
\]
The first assertion retains the actual phase $A_4(\delta_n)$; the second is its
uniform phase-independent consequence.
Since $n/(\log n)^3\to\infty$, either form resolves the question of
Erd\H{o}s and Gimbel along the full sequence of integers.

The full-sequence difficulty comes from the integer cutoff. Whenever
$\alpha_0(n)$ crosses an integer, the admissible class sizes change, and so do
the optimizing profile and its first-moment root. An estimate proved for most
integers does not automatically extend to the complementary phase window. The
estimates below are therefore uniform over the complete phase interval,
including both endpoints.

\subsection*{Relation to previous work}

The first-order asymptotic for the chromatic number of a dense random graph was
proved by Bollob\'as \citep{bollobas-1988}, following the early work of
Grimmett and McDiarmid \citep{grimmett-mcdiarmid-1975}; later refinements
include \citet{mcdiarmid-1990}, \citet{panagiotou-steger-2009}, and
\citet{heckel-2018}. The cochromatic number belongs to the theory of generalized
chromatic numbers associated with hereditary graph properties
\citep{scheinerman-1992,bollobas-thomason-1995}.

For the difference $\chi(G_n)-\zeta(G_n)$, Heckel and, independently, Steiner
related divergence to the nonconcentration of the chromatic number
\citep{heckel-2024-question,steiner-2024}. Heckel subsequently proved a
near-linear lower bound for a phase-dependent set containing approximately
$95\%$ of the integers and conjectured that the natural full-sequence scale is
$n/(\log n)^3$ \citep{heckel-2025-difference}.  We prove this lower bound along
the full sequence $n\to\infty$, with an explicit phase-resolved coefficient.
The signed first-moment gain and rare-seed
amplification come from Heckel's work.  The new full-sequence ingredients are
the uniform four-size optimization, the exact sign count, the control of every
common subprofile, and the canonical high-cell/residual decomposition.

\subsection*{Contributions and main ideas}

The proof uses six ingredients. Three do not depend on Problem~625: the
finite signed-overlap identity, the fixed-alphabet matching comparison, and
the quantitative seed-amplification lemma.

\begin{enumerate}
\item \textbf{Phase-resolved root separation.} Let $r_+$ and
$r_4^{\mathrm{co}}$ denote the ordinary and signed four-size first-moment
roots, respectively; they are defined in Sections~3 and~5. We prove, uniformly
over the complete phase,
\[
  r_+-r_4^{\mathrm{co}}
  =\left[\frac{(\log 2)^2}{4}A_4(\delta_n)+o(1)\right]
    \frac{n}{(\log n)^3}.
\]
The explicit entropy certificate $A_4(\delta)>\log(200/153)$ converts this
phase-dependent comparison into a full-sequence estimate.

\item \textbf{Exact signed-overlap structure.} For two signed witnesses, the
sum over compatible sign assignments is evaluated exactly as a product of
local cell rewards and the size of a binary cycle space.  Every labeled
overlap then has a unique canonical decomposition into high cells and a capped
residual matching with a no-return condition. The decomposition fixes the
high-cell data before the residual contribution is summed, so each labeled
overlap is represented once.

\item \textbf{Uniform control of all common subprofiles.} For a common
whole-class subprofile $\ell=(\ell_i)$ of the selected profile
$\mathbf k=(k_i)$, Section~7 defines an exposed diagonal reference weight
$D(\ell)$. We prove
\[
  \sum_{0\le \ell_i\le k_i}D(\ell)=1+o(1)
\]
uniformly throughout the phase.  The proof treats the empty, central, and full
ranges separately by a forward recurrence, a negative rate function, and a
reverse recurrence. These estimates remain uniform in the exceptional phase
where the earlier tame-profile hypotheses fail.

\item \textbf{Canonical summation of large overlap cells.} Cells larger than
half the class-size cap form a matching.  We sum all their labeled
realizations, compare them with full containment using one aggregate
falling-factorial bound, and regroup the resulting weights by endpoint table.
The finite comparison applies more generally to any fixed endpoint alphabet
whose selected cells form a matching.

\item \textbf{Residual cycle-space restriction.} After the high cells are
exposed, restriction outside their matching is injective on
even residual edge sets.  This gives a product bound for the remaining local
rewards and cycle-space factor. Writing $Z$ for the resulting signed-witness
count, we obtain
\[
  \frac{\mathbb E Z^2}{(\mathbb E Z)^2}
  \le
  \exp\!\left\{o\!\left(\frac{n}{(\log n)^4}\right)\right\}.
\]

\vspace{0.25\baselineskip}
\item \textbf{Quantitative amplification from a rare seed.} Paley--Zygmund
supplies a possibly rare signed witness. The following quantitative form of
Heckel's amplification method \citep{heckel-2025-difference} applies whenever
the displayed seed bound holds: if
$\mathbb P(\zeta(G_n)\le k_n)\ge e^{-\Lambda_n}$, then for every deterministic
$r=r(n)>0$ the amplification lemma adds at most
\[
  C\!\left(
    \frac{\sqrt{n\Lambda_n}+\sqrt{nr}}{\log n}+n^{1/3}+1
  \right)
\]
classes and has failure probability at most $e^{-r}+o(1)$, with one absolute
constant $C$.
\end{enumerate}

\Needspace{18\baselineskip}
\subsection*{Proof strategy}

We construct deterministic integers $k_\chi^-$ and $k_{\mathrm{co}}$, and a
deterministic $a_n=o(n/(\log n)^3)$, for which
\begin{enumerate}[label=(\Alph*)]
\item
\[
  k_\chi^- - k_{\mathrm{co}}-a_n
  =\left[\frac{(\log 2)^2}{4}A_4(\delta_n)+o(1)\right]
    \frac{n}{(\log n)^3};
\]
\item
\[
  \mathbb P\bigl(\chi(G_n)>k_\chi^-\bigr)\longrightarrow1;
\]
\item
\[
  \mathbb P\bigl(\zeta(G_n)\le k_{\mathrm{co}}+a_n\bigr)
  \longrightarrow1.
\]
\end{enumerate}
The intersection of the events in (B) and (C), together with (A), gives the
phase-resolved assertion of the main theorem; no independence between the two
events is required.

Sections~1--5 establish (B) and the root-scale identity in (A), including the
uniform four-size construction. Sections~6--9 turn the selected count $Z$ into
a rare seed: the exact overlap law is followed by disjoint estimates for
common whole classes, canonical high cells, and the capped residual part.
Section~10 amplifies that seed, constructs $a_n=o(n/(\log n)^3)$, and proves
(C). The final section combines (A)--(C) and extracts the explicit constants.

\section*{Notation and proof objects}
\label{sec:conventions-v3}

The conventions used in several sections are collected here; local notation
is introduced when needed.

\paragraph{Asymptotic conventions.}
An event holds \emph{with high probability} if its
probability tends to one as $n\to\infty$. Whenever a phase parameter is
present, each occurrence of $o(1)$ denotes a deterministic error sequence that
is uniform over the full phase interval, including sequences approaching
either endpoint.
For integers $x,r\ge0$, we use the falling factorial
\[
  (x)_r=x(x-1)\cdots(x-r+1),
  \qquad
  (x)_0=1,
\]
and set $(x)_r=0$ for $r>x$. A quotient involving falling factorials is used
only after the denominator has been shown to be nonzero.

\paragraph{Signed witnesses and profiles.}
A \emph{signed cocoloring witness} is a partition together with an $I$- or
$K$-mark on each class. An $I$-marked class must be independent and a
$K$-marked class must be complete. The marks are counting data; they do not
define a new graph invariant. In the four-size profile every class has size at
least two for sufficiently large $n$, so a realized class cannot satisfy both
requirements. Forgetting the marks therefore recovers its cocoloring without
multiplicity.

A \emph{profile} is a finitely supported sequence $(k_s)_{s\ge1}$ of
nonnegative integers, where $k_s$ is the number of classes of size $s$. It is
feasible on $n$ vertices with $k$ parts when
\[
  \sum_s k_s=k,
  \qquad
  \sum_s s k_s=n.
\]
The ordinary first moment counts profile partitions whose classes are all
required to be independent. The signed first moment counts profile partitions
together with one $I/K$ mark per class, subject to the corresponding
independence or completeness requirement.

\paragraph{Overlap table and support graph.}
Let $(A_a)_a$ and $(B_b)_b$ be two ordered profile partitions. Their overlap
table is
\[
  r_{ab}=|A_a\cap B_b|.
\]
Its row and column sums are the class sizes of the two partitions. It may also
be viewed as the cell-count table of a bipartite configuration model: every
underlying vertex pairs one labeled row stub with one labeled column stub.

The \emph{support graph} $H(r)$ is the simple bipartite graph whose vertices
are the row and column classes incident to an edge and whose edge set is
\[
  E(H(r))=\{(a,b):r_{ab}\ge2\}.
\]
A subset of its edges is \emph{even} if every vertex has even degree. Writing
$c(H)$ for the number of connected components, with $c(\varnothing)=0$, we put
\[
  \beta(H)=|E(H)|-|V(H)|+c(H).
\]
Then $\beta(H)$ is the dimension of the binary cycle space, so the number of
even edge sets is $2^{\beta(H)}$. The threshold two is exact: a cell of size
zero or one contains no edge internal to both partition classes, whereas a
cell of size at least two forces the marks on its row and column classes to
agree.

The high-cell threshold, deficit notation, endpoint tables, and the precise
separation between high-skeleton and residual factors are introduced where
they are first used in Sections~8--9.

\section{Phase notation and elementary estimates}\label{notation-and-elementary-facts}

For integers \(v,s\ge0\), with \(\binom vs=0\) when \(s>v\), let

\[
 \mu_s(v)=\binom vs2^{-\binom s2},\qquad \mu_s=\mu_s(n). \tag{1.1}
\]

We use the following standard inequalities in their stated forms.

\begin{enumerate}
\def\labelenumi{\arabic{enumi}.}
\tightlist
\item
  For integers \(m\ge 1\), \[
   \log(m!)=m\log m-m+\frac12\log(2\pi m)+O(1/m).          \tag{1.2}
  \] Consequently, with \(0\log 0=0\), one has
  \(\log(m!)=m\log m-m+O(\log(m+1))\) uniformly for
  \(m\in\mathbb N_0\), with an absolute implied constant.
\item
  If \(r\ge1\), \(t\ge0\), and a random variable \(Y\) is a function of \(r\)
  independent blocks and changing one block changes \(Y\) by at
  most one, then \[
   \Prob{\lvert Y-\Exp{Y}\rvert\ge t}
   \le2\exp(-2t^2/r).                                    \tag{1.3}
  \] We will use the corresponding one-sided bounds as well.
\item
  If \(Z\ge 0\), \(0<\Exp{Z}<\infty\), and \(\Exp{Z^2}<\infty\), then \[
   \Prob{Z>0}\ge\frac{\Exp{Z}^2}{\Exp{Z^2}}.             \tag{1.4}
  \]
\item
  If \(X\sim\operatorname{Bin}(m,1/2)\), then \[
   \Prob{X\le m/4}\le e^{-m/16}.                         \tag{1.5}
  \] Indeed, exponential Markov with \(t=\log 3\) bounds the
  probability by
  \([3^{1/4}(2/3)]^m\le e^{-m/16}\).
\item
  For every nonnegative random variable \(X\) and \(a>0\),
  \(\Prob{X\ge a}\le \Exp{X}/a\).
\end{enumerate}

The first is Stirling's estimate, the second is McDiarmid's
bounded-differences inequality, and the third is the zero-threshold case of
Paley--Zygmund. The fourth is the only binomial-tail estimate used
below; the fifth is Markov's inequality. The bounded-differences
formulation is the one recorded by
\citet[Theorem~3.1]{mcdiarmid-1989}.

\section{The complete independence-number phase}
\label{the-complete-independence-number-phase}

Put
\[
  q:=\log 2,
  \qquad L:=\log n,
  \qquad \ell:=\log\log n,
  \qquad C:=1+\log q-q.
\]
We shall use the elementary bounds
\begin{equation}
 \frac23<q<\frac7{10}.
 \tag{2.0}
\end{equation}
The location parameter $\alpha_0$ defined in the introduction has the
following equivalent form in natural logarithms:
\begin{equation*}
  \alpha_0
  =2\log_2 n-2\log_2\log_2 n+2\log_2(\mathrm e/2)+1
  =\frac{2(L-\ell+C)}q+1.
  \tag{2.1}
\end{equation*}
Let
\[
  \alpha:=\lfloor\alpha_0\rfloor,
  \qquad
  \delta:=\alpha_0-\alpha,
  \qquad
  b:=1-\delta.
\]
Thus $0\le\delta<1$, $0<b\le1$, and
\[
  \alpha=\frac{2(L-\ell+C)}q+b.
\]
When emphasizing dependence on $n$, we write $\delta_n:=\delta$.
All estimates below are uniform for the closed parameter range
$0\le\delta\le1$; this includes integer sequences approaching either endpoint
of the actual half-open phase interval.

\begin{lemma}[Uniform phase expansion and adjacent-size control]
There exist a bounded continuous function $K\colon[0,1]\to\mathbb R$,
absolute constants $C_{\mathrm{ph}},C_2,c>0$, and a real sequence
$(E_n^{\mathrm{ph}})_{n\ge2}$ such that, with
\[
  \varepsilon_n^{\mathrm{ph}}
  :=C_{\mathrm{ph}}\frac{1+\ell^2}{L},
\]
one has $\varepsilon_n^{\mathrm{ph}}\to0$ and, for all sufficiently large
$n$, the following expansion holds at the induced phase
$\delta_n=\alpha_0-\lfloor\alpha_0\rfloor$:
\begin{equation*}
  \log\mu_\alpha
  =
  \delta_n L
  +\left(\frac2q-\frac12-\delta_n\right)\ell
  +K(\delta_n)
  +E_n^{\mathrm{ph}},
  \qquad
  |E_n^{\mathrm{ph}}|\le\varepsilon_n^{\mathrm{ph}}.
  \tag{2.2}
\end{equation*}
The error bound is uniform over all induced phase values, and therefore also
along sequences approaching either endpoint of $[0,1]$. Moreover, uniformly
over those phase values,
\begin{equation*}
\begin{split}
  \log\mu_{\alpha+2}
  &=(\delta_n-2)L
    +\left(\frac2q+\frac32-\delta_n\right)\ell+O(1),\\
  \mu_{\alpha+2}
  &\le \exp\{-L+C_2\ell\}=o(1)
\end{split}
  \tag{2.3}
\end{equation*}
while
\begin{equation*}
  \mu_{\alpha-2}
  \ge c\,n^2(\log n)^{2/q-5/2}
  \tag{2.4}
\end{equation*}
for all sufficiently large $n$.
\end{lemma}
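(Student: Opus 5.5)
The plan is a direct Stirling computation of $\log\mu_s$ for $s=\alpha+j$ with $j\in\{-2,0,2\}$. Since $s=O(L)$ and $s^2/n\to0$, we have $\binom ns=\frac{n^s}{s!}\exp\{-s^2/(2n)+O(s^3/n^2)\}$, so by (1.2)
\[
  \log\mu_s = sL - s\log s + s - \tfrac12\log(2\pi s) - \tfrac q2 s(s-1) + O(L^2/n).
\]
Write $s=\frac{2(L-\ell+C)}{q}+b+j$ with $b=1-\delta_n\in(0,1]$, and set $u:=L-\ell+C$, so that $s=2u/q+b+j$.

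For the linear and quadratic parts,
\[
  sL-\tfrac q2 s(s-1)=s\Bigl(L-u-\tfrac q2(b+j-1)\Bigr)
  =s\Bigl(\ell-C-\tfrac q2(b+j-1)\Bigr).
\]
For the logarithmic part,
\[
  \log s=\log(2L/q)+\log\Bigl(1+\frac{-\ell+C+q(b+j)/2}{L}\Bigr)
  =\ell+\log 2-\log q+O\Bigl(\frac{1+\ell}{L}\Bigr).
\]
Multiplying by $s=2L/q+O(\ell)$ gives
\[
  s\log s=s(\ell+\log 2-\log q)+\frac{2}{q}\bigl(-\ell+C+\tfrac q2(b+j)\bigr)+O\Bigl(\frac{1+\ell^2}{L}\Bigr).
\]

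Collecting terms, the bracket multiplying $s$ is
\[
  \ell-C-\tfrac q2(b+j-1)-\ell-\log 2+\log q+1 .
\]
Using $C=1+\log q-q$, this simplifies to $q-\log 2-\tfrac q2(b+j-1)=-\tfrac q2(b+j-1)$, because $q=\log 2$. Hence
\[
  \log\mu_s=-\tfrac q2(b+j-1)s+\frac{2\ell}{q}-\frac{2C}{q}-(b+j)-\tfrac12\log(2\pi s)+O\Bigl(\frac{1+\ell^2}{L}\Bigr).
\]
Substituting $s=\frac{2(L-\ell+C)}{q}+b+j$, the first term becomes
\[
  -(b+j-1)(L-\ell+C)-\tfrac q2(b+j-1)(b+j).
\]
For the Stirling correction, $\tfrac12\log s=\tfrac12\ell+\tfrac12\log(2/q)+O(\ell/L)$.

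At $j=0$ we have $b-1=-\delta_n$, so the coefficient of $L$ is $\delta_n$. The coefficient of $\ell$ is
\[
  -\delta_n+\frac2q-\frac12 .
\]
This is $\frac2q-\frac12-\delta_n$, matching (2.2). The remaining constant is
\[
  K(\delta)=\delta C-\tfrac q2(-\delta)(1-\delta)-\frac{2C}{q}-(1-\delta)-\tfrac12\log(4\pi/q),
\]
a polynomial in $\delta$ and hence bounded and continuous on $[0,1]$. The error is $O((1+\ell^2)/L)$ uniformly, since every $O(\cdot)$ above depends only on $b\in[0,1]$ and $j\in\{-2,0,2\}$ through bounded quantities. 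This gives $\varepsilon^{\mathrm{ph}}_n$.

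At $j=2$ the $L$-coefficient is $-(b+1)=\delta_n-2$, and the $\ell$-coefficient is
\[
  (b+1)+\frac2q-\frac12=\frac2q+\frac32-\delta_n .
\]
Since $\delta_n-2\le-1$ and the $\ell$-coefficient is at most $2/q+3/2$, we obtain $\mu_{\alpha+2}\le\exp\{-L+C_2\ell\}$ for suitable $C_2$. This tends to $0$ because $\ell=o(L)$, which gives (2.3).

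At $j=-2$ the $L$-coefficient is $1-b+2=\delta_n+2\ge2$, and the $\ell$-coefficient is
\[
  -(\delta_n+2)+\frac2q-\frac12\ge\frac2q-\frac72 .
\]
The bound (2.4) requires care here. With the $\ell$-coefficient $\frac2q-\frac52-\delta_n$ the estimate reads $n^{2+\delta_n}(\log n)^{2/q-5/2-\delta_n}$. Since $n^{\delta}(\log n)^{-\delta}\ge1$ for large $n$, this is at least $n^2(\log n)^{2/q-5/2}$ times a constant. I would recheck the $\ell$-bookkeeping for $j=-2$, since the sign of $j$ enters both the $L$ and $\ell$ terms.

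The main obstacle is this exact constant bookkeeping and keeping all errors uniform in $\delta$, including near the endpoints. Uniformity is automatic because every expansion is polynomial in the bounded parameter $b$.
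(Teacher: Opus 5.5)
Your proposal is correct, and the $j=-2$ bookkeeping you flagged needs no repair. The $\ell$-coefficient $(b-3)+\frac2q-\frac12=\frac2q-\frac52-\delta_n$ is exactly what the paper obtains in the form $2L+(\frac2q-\frac52)\ell+\delta_n(L-\ell)$, and $(n/\log n)^{\delta_n}\ge1$ then gives (2.4).

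For (2.2) you follow essentially the paper's route: Stirling at $\alpha$, a Taylor expansion of $\log\alpha$ about $\log(2L/q)$, and collection of the constant. Your $-\frac12\log(4\pi/q)$ is the paper's $-\frac12\log(2\pi)-\frac q2+\frac12\log q$.

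The real difference is in (2.3)--(2.4). You redo Stirling at $s=\alpha+j$ with one formula parametrized by $j$. The paper never applies Stirling at $\alpha\pm2$. Instead it combines the exact adjacent ratios $\mu_{s+1}/\mu_s=\frac{n-s}{s+1}2^{-s}$ with the exact phase identity $2^\alpha=e^{2C+qb}n^2/L^2$, obtaining $\mu_{\alpha+2}=\mu_\alpha\Theta(L^2/n^2)$ and $\mu_{\alpha-2}=\mu_\alpha\Theta(n^2/L^2)$, and reads both estimates off (2.2).

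Your route gives more than is needed: explicit constant terms and an $O((1+\ell^2)/L)$ error at $\alpha\pm2$, rather than $O(1)$. The paper's route is shorter and produces the ratio estimates $\mu_{s+1}/\mu_s=\Theta(L/n)$ and $\mu_{s-1}/\mu_s=\Theta(n/L)$ for all $|s-\alpha|\le M$. It reuses these later, for instance for $\Xi_{\mathrm{empty}}$ in Section~7 and for the bound $\mu_{u_i}(n)\le n^{6+o(1)}$.

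One small presentational point: your displayed expansion of $s\log s$ with error $O((1+\ell^2)/L)$ does not follow from the coarse bound $\log s=\ell+\log2-\log q+O((1+\ell)/L)$ alone. It needs $\log(1+y)=y+O(y^2)$ together with $s=\frac{2L}{q}(1+y)$, which is evidently what you used, so state that step explicitly.
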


\begin{proof}
Because $\alpha=O(L)$ uniformly in the phase, one has $\alpha<n/2$ for all
sufficiently large $n$. For $0\le x\le1/2$,
$-2x\le\log(1-x)\le-x$. Hence
\[
  \log(n)_\alpha
  =\alpha L+\sum_{j=0}^{\alpha-1}\log\!\left(1-\frac jn\right)
  =\alpha L+R_{n,\alpha},
  \qquad
  |R_{n,\alpha}|\le\frac{\alpha^2}{n}.
\]
Using the uniform Stirling remainder
\[
  \log(\alpha!)
  =\alpha\log\alpha-\alpha
   +\frac12\log(2\pi\alpha)+O(1/\alpha)
\]
in
\[
  \log\mu_\alpha
  =\log(n)_\alpha-\log(\alpha!)-q\binom\alpha2
\]
gives
\begin{equation*}
  \log\mu_\alpha
  =
  \alpha\left(
    L-\log\alpha+1-\frac q2(\alpha-1)
  \right)
  -\frac12\log(2\pi\alpha)
  +O(1/L).
  \tag{2.5}
\end{equation*}
All implied constants and remainder bounds here are uniform in $\delta$.

Write
\[
  \alpha=\frac{2L}{q}(1+x_n),
  \qquad
  x_n:=\frac{-\ell+C+qb/2}{L}.
\]
Uniformly for the actual range $0<b\le1$ (and on its closure), one has
$|x_n|\le1/2$ for all sufficiently large $n$. The Taylor formula
$\log(1+x)=x+O(x^2)$ therefore gives
\[
  \log\alpha
  =\ell+q-\log q
   +\frac{-\ell+C+qb/2}{L}
   +O\!\left(\frac{\ell^2}{L^2}\right).
\]
Since
\[
  \frac q2(\alpha-1)=L-\ell+C-\frac{q\delta}{2},
\]
we obtain the uniform expansion
\begin{equation*}
  L-\log\alpha+1-\frac q2(\alpha-1)
  =
  \frac{q\delta}{2}
  +\frac{\ell-C-qb/2}{L}
  +O\!\left(\frac{\ell^2}{L^2}\right).
  \tag{2.6}
\end{equation*}
Multiplying by
$\alpha=2(L-\ell+C)/q+b$ and collecting terms yields
\[
\begin{split}
  \alpha\left(
    L-\log\alpha+1-\frac q2(\alpha-1)
  \right)
  ={}&
  \delta L+\left(\frac2q-\delta\right)\ell\\
  &+\delta C+\frac q2\delta(1-\delta)
    -\frac{2C}{q}-(1-\delta)
    +O\!\left(\frac{1+\ell^2}{L}\right).
\end{split}
\]
Also
\[
  -\frac12\log(2\pi\alpha)
  =-\frac12\ell-\frac12\log(2\pi)-\frac q2
    +\frac12\log q+O(\ell/L).
\]
Thus (2.2) holds with
\begin{equation*}
\begin{split}
  K(\delta):={}&
  \delta C+\frac q2\delta(1-\delta)-\frac{2C}{q}-(1-\delta)\\
  &-\frac12\log(2\pi)-\frac q2+\frac12\log q.
\end{split}
  \tag{2.7}
\end{equation*}
This function is continuous, hence bounded, on $[0,1]$. The combined remainder
in (2.5)--(2.7) has absolute value at most
$C'(1+\ell^2)/L$ for an absolute $C'$. Taking
$C_{\mathrm{ph}}\ge C'$ gives
$|E_n^{\mathrm{ph}}|\le\varepsilon_n^{\mathrm{ph}}$ in (2.2), where
$E_n^{\mathrm{ph}}$ denotes the combined remainder just obtained.

For integers $1\le s<n$, the adjacent-size ratios are exact:
\begin{equation*}
  \frac{\mu_{s+1}}{\mu_s}
  =\frac{n-s}{s+1}2^{-s},
  \qquad
  \frac{\mu_{s-1}}{\mu_s}
  =\frac{s}{n-s+1}2^{s-1}.
  \tag{2.8}
\end{equation*}
The displayed formula for $\alpha$ gives the exact phase representation
\[
  2^\alpha
  =\exp(q\alpha)
  =\exp(2C+qb)\frac{n^2}{L^2}.
\]
Since $0<b\le1$, the multiplicative factor $\exp(2C+qb)$ is bounded
above and below by positive absolute constants. Therefore, for every fixed
$M$, uniformly over integers $s$ with $|s-\alpha|\le M$,
\[
  \frac{\mu_{s+1}}{\mu_s}=\Theta(L/n),
  \qquad
  \frac{\mu_{s-1}}{\mu_s}=\Theta(n/L).
\]
In particular,
\begin{equation*}
  \mu_{\alpha+2}
  =\mu_\alpha\,\Theta(L^2/n^2),
  \qquad
  \mu_{\alpha-2}
  =\mu_\alpha\,\Theta(n^2/L^2).
  \tag{2.8a}
\end{equation*}
Taking logarithms and using (2.2) proves the first line of (2.3). Since
$\delta\le1$, its leading term is at most $-L$, and the coefficient of
$\ell$ is uniformly bounded; this proves the second line.

For the lower adjacent size, (2.2) and (2.8a) give
\[
\begin{split}
  \log\mu_{\alpha-2}
  ={}&2L+\left(\frac2q-\frac52\right)\ell
      +\delta(L-\ell)+K(\delta)+O(1).
\end{split}
\]
For large $n$, $L-\ell>0$, while $K$ is bounded below. Exponentiating gives
(2.4), with one absolute constant $c$ and one eventuality threshold valid for
the complete phase.
\end{proof}

Let $\alpha(G)$ denote the independence number of a graph $G$, and let
$X_{\alpha+2}$ denote the number of independent sets of size
$\alpha+2$. Then
$\mathbb E X_{\alpha+2}=\mu_{\alpha+2}$, and the event
$\alpha(G_n)>\alpha+1$ implies $X_{\alpha+2}\ge1$. Consequently Markov's
inequality and (2.3) give, uniformly along the full sequence of integers, the
deterministic cap error
\begin{equation*}
  \mathbb P\bigl(\alpha(G_n)>\alpha+1\bigr)
  \le\mu_{\alpha+2}
  =:\varepsilon_n^{\mathrm{cap}}
  \longrightarrow0
  \tag{2.9}
\end{equation*}

\section{Continuous profile roots}
\label{continuous-profile-roots}

Throughout this section, $n$ is sufficiently large that $\alpha\ge6$.
For an integer class size $u\ge1$, put
\[
  d_u:=2^{\binom u2}u!.
\]
Write $u=\alpha-i$, so $i$ is the deficit from the phase center $\alpha$.
We use
\begin{equation*}
  S_+:=\{-1,0,1,2,\ldots\},
  \qquad
  S_4:=\{2,3,4,5\}.
  \tag{3.1}
\end{equation*}
At finite $n$, the unrestricted support is
$S_+^{(n)}:=\{-1,0,\ldots,\alpha-1\}$: the lower endpoint comes from the
cap event (2.9), and the upper endpoint from positivity of class sizes.
For $S\in\{S_+,S_4\}$, set
\[
  S^{(n)}:=
  \begin{cases}
    S_+^{(n)},&S=S_+,\\
    S_4,&S=S_4.
  \end{cases}
\]
Every finite-$n$ profile, maximization, and sum below uses $S^{(n)}$;
limiting quantities use the displayed support $S$ itself.

For $i\in S_+^{(n)}$, define the curved score
\begin{equation*}
  h_n(i):=
  \begin{cases}
    -\dfrac{\log 2}{2}i^2
      +\displaystyle\sum_{r=0}^{i-1}\log\!\left(1-\dfrac r\alpha\right),
      & i\ge0,\\[1ex]
    -\dfrac{\log 2}{2}+\log\!\left(\dfrac{\alpha}{\alpha+1}\right),
      & i=-1.
  \end{cases}
  \tag{3.1a}
\end{equation*}

For $S\in\{S_+,S_4\}$ and a nonnegative real profile
$\mathbf k=(k_i)_{i\in S^{(n)}}$, put
$k:=\sum_{i\in S^{(n)}}k_i$. When $k>0$, define
\begin{equation*}
  L_{\mathbf k}
  :=n\log n-n+k-\sum_{i\in S^{(n)}} k_i\log(k_i d_{\alpha-i}),
  \tag{3.2}
\end{equation*}
with the convention $0\log0=0$, and let $L_S(n,k)$ be its maximum over
real $k_i\ge0$ satisfying
\begin{equation*}
  \sum_{i\in S^{(n)}}k_i=k,
  \qquad
  \sum_{i\in S^{(n)}}(\alpha-i)k_i=n.
  \tag{3.3}
\end{equation*}
For $k$ feasible for the support $S$ under consideration, set
\begin{equation*}
  s:=\frac nk,
  \qquad
  T:=\alpha-s=\alpha-\frac nk.
  \tag{3.4}
\end{equation*}
With $p_i:=k_i/k$, the constraints give
$T=\sum_{i\in S^{(n)}}ip_i$; thus $T$ is the mean deficit of the normalized
profile.

\Needspace{12\baselineskip}
For $S\in\{S_+,S_4\}$, define the finite and limiting partition functions
and mean maps by
\[
  Z_{n,S}(\lambda)
  :=\sum_{i\in S^{(n)}}\exp\{\lambda i+h_n(i)\},
  \qquad
  M_{n,S}(\lambda):=\frac{d}{d\lambda}\log Z_{n,S}(\lambda),
\]
\[
  Z_S(\lambda)
  :=\sum_{i\in S}\exp\!\left(\lambda i-\frac q2i^2\right),
  \qquad
  M_S(\lambda):=\frac{d}{d\lambda}\log Z_S(\lambda).
\]
Whenever the equations $M_{n,S}(\lambda)=T$ and $M_S(\lambda)=T$ have
unique solutions, denote them by $\lambda_{n,S}(T)$ and $\lambda_S(T)$,
respectively.

\Needspace{16\baselineskip}
\begin{lemma}[Uniform root, slope, and finite-dual estimates]
\label{lem:uniform-root-slope-dual}
Let
\begin{equation*}
  s_0:=\alpha_0-1-\frac2{\log 2},
  \qquad
  T_0:=\alpha-s_0
      =1+\frac2{\log 2}-\delta.
  \tag{3.5}
\end{equation*}
There exist a fixed $A_*>0$ and an integer $n_{\mathrm{root}}\ge2$ such that,
for every $n\ge n_{\mathrm{root}}$, every $S\in\{S_+,S_4\}$, and every
$0\le c\le\log 2$, the equation
\[
  L_S(n,k)+ck=0
\]
has a unique feasible zero $r_{S,c}$ in the
  corridor
\[
  \left|\frac n{r_{S,c}}-s_0\right|
  \le A_*\frac{\log\log n}{\log n}.
\]
No uniqueness outside this corridor is asserted.
Uniformly in $S$, $c$, and the phase,
\begin{equation*}
  \frac n{r_{S,c}}
  =s_0+O\!\left(\frac{\log\log n}{\log n}\right),
  \qquad
  \alpha-\frac n{r_{S,c}}
  =T_0+O\!\left(\frac{\log\log n}{\log n}\right).
  \tag{3.6}
\end{equation*}

For every fixed $A>0$, uniformly over $S\in\{S_+,S_4\}$,
$0\le c\le\log 2$, and $k$ feasible for $S$ with
$|n/k-s_0|\le A\log\log n/\log n$,
\begin{equation*}
  \frac{\partial}{\partial k}\{L_S(n,k)+ck\}
  =\frac2{\log 2}(\log n)^2
   +O_A((\log n)(\log\log n)).
  \tag{3.7}
\end{equation*}
In particular, the deterministic normalized slope error
\[
  \varepsilon_{n,A}^{\mathrm{slope}}
  :=
  \max_{\substack{S\in\{S_+,S_4\}\\0\le c\le\log 2}}
  \sup_{\substack{k\ \mathrm{feasible\ for}\ S\\
                   |n/k-s_0|\le A\log\log n/\log n}}
  \left|
    \frac{1}{(\log n)^2}
    \frac{\partial}{\partial k}\{L_S(n,k)+ck\}
    -\frac2{\log 2}
  \right|
\]
satisfies $\varepsilon_{n,A}^{\mathrm{slope}}\to0$.

For two supports $R,S\in\{S_+,S_4\}$ at a $k$ feasible for both finite
supports $R^{(n)}$ and $S^{(n)}$,
\begin{equation*}
  \frac{L_R(n,k)-L_S(n,k)}{k}
  =\mathcal F_{n,R}(T)-\mathcal F_{n,S}(T),
  \tag{3.8}
\end{equation*}
where
\begin{equation*}
  \mathcal F_{n,S}(T)
  :=
  \max_{\substack{p_i\ge0\ (i\in S^{(n)})\\
                   \sum_{i\in S^{(n)}} p_i=1\\
                   \sum_{i\in S^{(n)}} i p_i=T}}
  \left[
    -\sum_{i\in S^{(n)}} p_i\log p_i
    +\sum_{i\in S^{(n)}} p_i h_n(i)
  \right].
  \tag{3.8a}
\end{equation*}
Fix the compact target interval
\begin{equation*}
  K_*:=
  \left[
    \frac2{\log 2}-\frac1{10},
    1+\frac2{\log 2}+\frac1{10}
  \right]
  \subset(2,5).
  \tag{3.9}
\end{equation*}
Then, uniformly for $T\in K_*$,
\begin{equation*}
  \mathcal F_{n,S}(T)
  \longrightarrow
  \mathcal F_S(T),
\end{equation*}
where $\mathcal F_S$ is the function defined in the main theorem;
equivalently,
\begin{equation*}
  \mathcal F_S(T)
  =
  \max_{\substack{p_i\ge0\ (i\in S)\\
                   \sum_{i\in S} p_i=1\\
                   \sum_{i\in S} i p_i=T\\
                   \sum_{i\in S} i^2p_i<\infty}}
  \left[
    -\sum_{i\in S} p_i\log p_i
    -\frac{\log 2}{2}\sum_{i\in S} i^2p_i
  \right].
  \tag{3.9a}
\end{equation*}
For every $S\in\{S_+,S_4\}$ and $T\in K_*$, the limiting mean equation
$M_S(\lambda)=T$ has the unique solution $\lambda_S(T)$. There exist a fixed
$\Lambda>0$ and an integer $n_{\mathrm{dual}}$ such that, for every
$n\ge n_{\mathrm{dual}}$, every such $S$, and every $T\in K_*$, the finite
equation $M_{n,S}(\lambda)=T$ has the unique solution $\lambda_{n,S}(T)$ and
\[
  |\lambda_{n,S}(T)|+|\lambda_S(T)|\le2\Lambda.
\]
For $n\ge n_{\mathrm{dual}}$, define the deterministic dual error
\[
\begin{split}
  \varepsilon_n^{\mathrm{dual}}
  :=\max_{S\in\{S_+,S_4\}}
  \sup_{T\in K_*}
  \bigl(&|\mathcal F_{n,S}(T)-\mathcal F_S(T)|\\
        &+|\lambda_{n,S}(T)-\lambda_S(T)|\bigr)
\end{split}
\]
and set it arbitrarily, say equal to zero, for $n<n_{\mathrm{dual}}$. Then
$\varepsilon_n^{\mathrm{dual}}\to0$.

For $S_4$, both optimizers are unique; denote them by $p_{n,i}(T)$ and
$p_i(T)$. Then
\begin{equation*}
  \sup_{T\in K_*}\max_{2\le i\le5}
  |p_{n,i}(T)-p_i(T)|\longrightarrow0.
  \tag{3.9b}
\end{equation*}
Moreover, there are $\eta_*>0$ and $n_{\mathrm{int}}\in\mathbb N$ such that
\[
  p_{n,i}(T)\ge\eta_*
  \qquad
  (n\ge n_{\mathrm{int}},\ T\in K_*,\ 2\le i\le5).
\]
All displayed bounds and eventuality thresholds are uniform over the induced
phases $\delta_n$, including sequences approaching either endpoint.
\end{lemma}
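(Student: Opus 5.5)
The plan is to reduce $L_S(n,k)$ to a one-dimensional entropy problem and then analyze $k\mapsto L_S(n,k)+ck$ on the corridor. I will first prove the algebraic identity (3.8), then the dual and convergence statements, and only then the root and slope statements, which depend on them.

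\emph{Step 1: normalize the profile.} Write $k_i=kp_i$. Stirling gives
\[
  \log d_{\alpha-i}=\log d_\alpha-\lambda_0 i-h_n(i)+(\text{terms affine in } i),
\]
where $\lambda_0$ is an explicit real, of order $L$, depending on $n$ and $\alpha$. For $i\ge0$ this follows from $(\alpha)_i$ and $2^{\binom{\alpha}{2}-\binom{\alpha-i}{2}}$; the case $i=-1$ is checked the same way. Substituting into (3.2) and using the constraints (3.3), every affine term collapses into a function of $(n,k)$ alone. This leaves
\[
  L_{\mathbf k}=\Phi_n(k)+k\Bigl[-\sum_i p_i\log p_i+\sum_i p_ih_n(i)\Bigr],
\]
where $\Phi_n(k)$ does not depend on the support. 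Maximizing over $p$ subject to the mean-deficit constraint $\sum_i ip_i=T$ yields (3.8) directly.

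\emph{Step 2: dual theory.} For the finite problem (3.8a), Lagrange duality gives
\[
  \mathcal F_{n,S}(T)=\inf_\lambda\{\log Z_{n,S}(\lambda)-\lambda T\},
\]
and the Gibbs maximizer $p_{n,i}\propto e^{\lambda i+h_n(i)}$ is unique because $\log Z$ is strictly convex. The same holds in the limit with $Z_S$. The series $Z_{S_+}$ converges for every $\lambda$ because of the Gaussian factor. Since $K_*$ lies inside the open interval (minimum, supremum of $S$) for both supports, strict monotonicity of $M_S$ gives a unique root with $\lambda_S(K_*)$ contained in a compact interval $[-\Lambda,\Lambda]$.

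Next I show $h_n(i)\to-\tfrac q2 i^2$ locally uniformly, with $|h_n(i)+\tfrac q2 i^2|\le i^2/\alpha$, and that the tails $\sum_{i>M}e^{\lambda i-qi^2/2}$ are uniformly small for $|\lambda|\le2\Lambda$. Together these give $Z_{n,S}\to Z_S$ and $M_{n,S}\to M_S$ in $C^1$, uniformly on $|\lambda|\le2\Lambda$. The derivative $M_S'$ is a variance bounded below there, so the inverse function argument yields existence and uniqueness of $\lambda_{n,S}(T)$, its convergence, the convergence $\varepsilon^{\mathrm{dual}}_n\to0$, and (3.9b). The uniform positivity $p_{n,i}\ge\eta_*$ follows from the bounds $|\lambda|\le2\Lambda$ together with $i\le5$.

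\emph{Step 3: root and slope.} By the envelope theorem applied to the profile maximum, differentiating $L_S(n,k)$ in $k$ with $n$ fixed gives
\[
  \partial_k L_S(n,k)=-\nu,
\]
where $\nu$ is the multiplier of $\sum_i k_i=k$, and $\log k_i=-\log d_{\alpha-i}-\nu-\theta(\alpha-i)$. Expressing $\nu,\theta$ through $\Phi_n$, $\lambda_0$ and the finite dual parameter $\lambda_{n,S}(T)$, which is bounded by Step 2, shows that $\partial_k L_S$ equals
\[
  \log(n/k)\cdot(\text{slope of }T\text{ in }k)\approx \frac{s^2}{n}\cdot\frac{s}{?}\ \text{terms},
\]
whose leading part is $\tfrac{2}{q}L^2$, since $dT/dk=n/k^2=s^2/n$ and $\theta\approx \lambda_0\sim L$. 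All errors are $O(L\ell)$ on the corridor, because there $s=\Theta(L)$ and $T\in K_*$. This proves (3.7), and adding $ck$ with $c\le\log2$ only changes the derivative by $O(1)$.

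For the root, I evaluate $L_S(n,k)+ck$ at $n/k=s_0\pm A\ell/L$ using (2.2)-type Stirling expansions, where $\Phi_n(k)$ supplies the dominant term. The value at the corridor center is $O(k\,\ell)$ in magnitude. The slope bound then changes $L_S+ck$ by about $\tfrac2q L^2\cdot kA\ell/(sL)\gg k\ell$ across the corridor, with opposite signs at the two ends once $A_*$ is chosen large. The intermediate value theorem gives a zero inside the corridor, and strict monotonicity from (3.7) makes it unique there; (3.6) then follows immediately.

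The main obstacle I expect is this last sign computation: keeping the $\ell$-order terms of $\Phi_n(k)$ and $\log d_\alpha$ uniform in $\delta$, including at both endpoints, and checking that $T_0=\alpha-s_0$ is the point where the leading $L$-order term vanishes.
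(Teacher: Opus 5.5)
Your architecture matches the paper's. The support-independent $\Phi_n(k)$ of Step~1 is the common part of (3.14), so (3.8) is immediate. The decomposition of $\log d_{\alpha-i}$ is exact rather than a Stirling estimate, and that exactness is what makes (3.8) an identity. Step~2 follows the paper's dual argument. The one gloss there is the countable support $S_+$ in (3.9a): ``Lagrange duality'' does not by itself give attainment, whereas the Gibbs identity (3.13a) does.

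The gap is Step~3: neither (3.7) nor (3.6) is actually derived. Your expression for $\partial_kL_S$ is not a formula, and the heuristic points at the wrong term. A multiplier of size $q\alpha$ times $k\,dT/dk=s$ is about $4L^2/q$, twice the target. The constant $2/q$ appears only after adding the contribution of $A_n=-\log d_\alpha\approx-\frac q2\alpha^2$. Your own decomposition, with $\mathcal F_{n,S}'=-\lambda_{n,S}$ and $T+s=\alpha$, gives the exact identity
\[
 \partial_kL_S(n,k)=-\log k+A_n+B_n\alpha+\log Z_{n,S}\bigl(\lambda_{n,S}(T)\bigr)-\alpha\lambda_{n,S}(T),
 \qquad B_n=q\alpha-\frac q2+\log\alpha .
\]
Here $A_n+B_n\alpha=\frac q2\alpha^2+\alpha\log\alpha-\log(\alpha!)$, and (2.1) gives $\frac q2\alpha^2=\frac2qL^2+O(L\ell)$. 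All remaining terms are $O(L)$ by Step~2, since $T\in K_*$ throughout the corridor. This proves (3.7) on the whole corridor without using the root location.

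For (3.6) you need $\Psi_{n,S}(s_0)=O(\ell)$, where $\Psi_{n,S}(s):=L_S(n,n/s)/(n/s)$. You list this as the main obstacle but do not verify it, and it is the only place where the exact constant in $s_0=\alpha_0-1-2/q$ matters. Since $\Psi_{n,S}'\approx-L$, an $O(1)$ error in $s_0$ would make $\Psi_{n,S}(s_0)$ of order $L$ and move the root out of the $\ell/L$ corridor.

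The check is a two-level cancellation in the expansion (3.15):
\begin{enumerate}
\item $L-\frac q2s_0=\ell-\log q+q$ cancels the $\ell+q-\log q$ in $\log\alpha=\log s_0+T_0/s_0+O(L^{-2})$. Hence $s_0\bigl(L-\log\alpha-\frac q2s_0+\frac q2\bigr)=\frac q2s_0-T_0+O(\ell)$.
\item Then $\frac q2s_0=L-\ell+\log q-q$ cancels the explicit $-L$, and the $T_0$ terms cancel, leaving $O(\ell)$ uniformly in $\delta$. Step~2 supplies $\mathcal F_{n,S}(T_0)=O(1)$.
\end{enumerate}
With this and (3.7), your intermediate-value argument goes through. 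The change of $L_S+ck$ across a half-corridor is $\asymp A_*k\ell$, not $\gg k\ell$, so $A_*$ must exceed the constant in the center bound.
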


\begin{proof}
Put $q:=\log 2$, $L:=\log n$, and $\ell:=\log\log n$.
Dividing (3.2) by $k=n/s$ and writing $p_i=k_i/k$ gives
\[
  \frac{L_S(n,n/s)}{n/s}
  =(s-1)L-s+1+\log s
   +\max_{\substack{p_i\ge0\ (i\in S^{(n)})\\
                     \sum_i p_i=1,\ \sum_i ip_i=\alpha-s}}
      \left[-\sum_i p_i\log p_i
            -\sum_i p_i\log d_{\alpha-i}\right].
\]
There is an exact affine-plus-curved decomposition
\begin{equation*}
  -\log d_{\alpha-i}=A_n+B_ni+h_n(i),
  \tag{3.10}
\end{equation*}
where
\begin{equation*}
  A_n:=-\log d_\alpha,
  \qquad
  B_n:=q\alpha-\frac q2+\log\alpha,
  \tag{3.11}
\end{equation*}
and $h_n$ is the curved score defined in (3.1a). Direct subtraction verifies
the displayed identity also at $i=-1$.
Thus
\begin{equation*}
  h_n(i)=-\frac q2i^2+O(i^2/\alpha)
  \quad\text{for every fixed }i,
  \qquad
  h_n(i)\le-\frac q2i^2
  \quad(i\ge-1).
  \tag{3.12}
\end{equation*}
The first estimate is uniform on every fixed finite set of deficits; the
second is global on the finite support.

\displayheading{Finite duals and optimizing tilts}
We now prove the assertions about the partition functions and mean maps
defined before the lemma.
The target center satisfies
\begin{equation*}
  \frac2q\le T_0\le1+\frac2q,
  \tag{3.13}
\end{equation*}
so it lies in the interior of $K_*$. The same is true of every target in
$K_*$ relative to both supports.

Choose $\Lambda>0$ so that
\[
  M_S(-\Lambda)<\min K_*<\max K_*<M_S(\Lambda)
  \qquad(S=S_+,S_4).
\]
Such a choice exists because the limiting mean maps are strictly increasing
and their endpoint limits bracket $K_*$. For $m=0,1,2$ and
$|\lambda|\le\Lambda$, (3.12) gives the summable majorant
\[
  |i|^m\exp\!\left(\Lambda|i|-\frac q2i^2\right).
\]
On every fixed finite set of indices, the weights converge by (3.12). The
majorant makes the remaining Gaussian tail uniformly small, and the omitted
tail beyond the finite cutoff $\alpha-1$ is bounded by the same series.
Consequently the zeroth, first, and second tilted moments converge uniformly
on $[-\Lambda,\Lambda]$. In particular,
\[
  Z_{n,S}\to Z_S,
  \qquad
  M_{n,S}\to M_S,
  \qquad
  M'_{n,S}\to M'_S
\]
uniformly there.

\Needspace{10\baselineskip}
For all sufficiently large $n$, the finite support $S^{(n)}$ contains at
least two indices, and hence
$M'_{n,S}(\lambda)=\operatorname{Var}_{n,S,\lambda}(i)>0$ for every
$\lambda\in\mathbb R$. Likewise,
$M'_S(\lambda)=\operatorname{Var}_{S,\lambda}(i)>0$ for every $\lambda$.
Thus both mean maps are globally strictly increasing. By continuity and
compactness,
\[
  v_*:=\frac12
  \min_{\substack{S\in\{S_+,S_4\}\\|\lambda|\le\Lambda}}
  M'_S(\lambda)>0.
\]
For all sufficiently large $n$, one has
$M'_{n,S}(\lambda)\ge v_*$ on the same interval, and the finite mean maps
also bracket $K_*$. Hence, for every $T\in K_*$, the equations
\[
  M_{n,S}(\lambda_{n,S}(T))=T,
  \qquad
  M_S(\lambda_S(T))=T
\]
have unique solutions in $[-\Lambda,\Lambda]$. The inverse mean maps are
uniformly $v_*^{-1}$-Lipschitz, and therefore
\[
  \sup_{T\in K_*}
  |\lambda_{n,S}(T)-\lambda_S(T)|
  \le
  v_*^{-1}\sup_{|\lambda|\le\Lambda}
  |M_{n,S}(\lambda)-M_S(\lambda)|
  \longrightarrow0.
\]

For the finite supports, the entropy functional in (3.8a) is strictly
concave on its feasible simplex. Its unique optimizer is
\[
  p_{n,S,T}(i)
  =\frac{
    \exp\{\lambda_{n,S}(T)i+h_n(i)\}
  }{Z_{n,S}(\lambda_{n,S}(T))},
\]
and its dual value is exactly
\[
  \mathcal F_{n,S}(T)
  =\log Z_{n,S}(\lambda_{n,S}(T))
   -\lambda_{n,S}(T)T.
\]

The limiting assertion for the countable support $S_+$ requires an attainment
argument.  Fix $T\in K_*$, write $\lambda=\lambda_{S_+}(T)$, and set
\[
 \pi_\lambda(i):=
 \frac{\exp\{\lambda i-q i^2/2\}}{Z_{S_+}(\lambda)}.
\]
Write
\[
  D_{\mathrm{KL}}(p\Vert\pi_\lambda)
  :=\sum_i p_i\log\frac{p_i}{\pi_\lambda(i)},
\]
with the convention that a summand with $p_i=0$ is zero.
This probability distribution has mean $T$.  For every feasible probability
vector $p$ with finite quadratic moment,
\begin{equation}
 -\sum_i p_i\log p_i-\frac q2\sum_i i^2p_i
 =\log Z_{S_+}(\lambda)-\lambda T
  -D_{\mathrm{KL}}(p\Vert\pi_\lambda).
 \tag{3.13a}
\end{equation}
The identity follows first for finitely supported $p$ and then by truncation;
finite quadratic moment also makes the entropy finite by comparison with a
Gaussian law.  We interpret the objective as $-\infty$ when the quadratic
moment diverges.  Since relative entropy is nonnegative and vanishes only at
$p=\pi_\lambda$, (3.13a) proves finiteness, attainment, uniqueness, and the
dual formula for $S_+$.  The same identity also recovers the finite-support
case.  Uniform convergence of the partition functions and tilts now proves
(3.9a) and
$\varepsilon_n^{\mathrm{dual}}\to0$. On the finite support $S_4$, the optimizer
formula also gives uniform coordinatewise convergence. The limiting
coordinates are positive and continuous in $T$ on the compact set $K_*$;
this proves (3.9b).

\displayheading{Exact cancellation between supports}
Substituting (3.10) into the normalized exponent gives the exact identity
\begin{equation*}
\begin{split}
  \Psi_{n,S}(s)
  &:=\frac{L_S(n,n/s)}{n/s}\\
  &=(s-1)L-s+1+\log s
    +A_n+B_n(\alpha-s)
    +\mathcal F_{n,S}(\alpha-s).
\end{split}
  \tag{3.14}
\end{equation*}
At a fixed feasible $k$, the values of $s$ and $T=\alpha-s$ are the same for
both supports. Every term in (3.14) except the final dual value is therefore
common, which proves the exact difference identity (3.8). No limiting
replacement is used in this cancellation.

\displayheading{The value and derivative at the phase center}
At $s=s_0$, let $T_0=\alpha-s_0$. Applying Stirling once to $A_n$ in
(3.14), and then simplifying the affine terms exactly, gives the scalar
expression
\begin{equation*}
\begin{split}
  \Psi_{n,S}(s_0)-\mathcal F_{n,S}(T_0)
  ={}&s_0\left(
    L-\log\alpha-\frac q2s_0+\frac q2
  \right)-L+T_0+1+\log s_0+\frac q2T_0^2\\
  &\hspace{42mm}
  -\frac12\log(2\pi\alpha)+O(1/L).
\end{split}
  \tag{3.15}
\end{equation*}
Equation (2.1) gives the exact identity
\[
  \frac{qs_0}{2}=L-\ell+\log q-q.
\]
Since $\alpha=s_0+T_0$ and $T_0$ is uniformly bounded,
\[
  \log\alpha
  =\log s_0+\frac{T_0}{s_0}+O(L^{-2}),
  \qquad
  \log s_0=\ell+q-\log q+O(\ell/L).
\]
Consequently
\[
\begin{split}
  s_0\left(
    L-\log\alpha-\frac q2s_0+\frac q2
  \right)
  &=-T_0+\frac q2s_0+O(\ell)\\
  &=-T_0+L-\ell+\log q-q+O(\ell).
\end{split}
\]
Substitution in (3.15) cancels the terms $L$ and $T_0$ and leaves
$O(\ell)$ uniformly in the phase. The dual term in (3.14) is $O(1)$ uniformly
on $K_*$, by the Gaussian majorant above. Therefore
\begin{equation*}
  \Psi_{n,S}(s_0)=O(\ell)
  \tag{3.16}
\end{equation*}
uniformly for both supports.

The phase formula for $\alpha$ and the expansion of $\log\alpha$ also give
\begin{equation*}
  B_n=2L-\ell+O(1)
  \tag{3.17}
\end{equation*}
uniformly in $\delta$. Strict concavity and the optimizer formula imply
\[
  \frac{d}{dT}\mathcal F_{n,S}(T)=-\lambda_{n,S}(T).
\]
Differentiating (3.14), with $T=\alpha-s$, yields
\begin{equation*}
  \Psi'_{n,S}(s)
  =L-1+s^{-1}-B_n+\lambda_{n,S}(T)
  =-L+O_A(\ell)
  \tag{3.18}
\end{equation*}
uniformly whenever $|s-s_0|\le A\ell/L$. Indeed, the image of this corridor
under $T=\alpha-s$ lies inside $K_*$ for all sufficiently large $n$, because
$K_*$ has a fixed margin around the complete range of $T_0$.

\displayheading{Existence, uniqueness, and slope of the roots}
Set
\[
  \Psi_{n,S,c}(s):=\Psi_{n,S}(s)+c.
\]
By (3.16), there is an absolute $C_0$ such that
$|\Psi_{n,S,c}(s_0)|\le C_0\ell$ for both supports and every
$0\le c\le q$. By (3.18), after one phase-independent eventuality threshold,
\[
  \Psi'_{n,S,c}(s)\le-\frac12L
\]
throughout each fixed corridor. Choose $A_*>2C_0$. Integration gives
\[
\begin{aligned}
  \Psi_{n,S,c}\!\left(s_0-A_*\frac\ell L\right)
  &\ge\left(\frac{A_*}{2}-C_0\right)\ell>0,\\
  \Psi_{n,S,c}\!\left(s_0+A_*\frac\ell L\right)
  &\le\left(C_0-\frac{A_*}{2}\right)\ell<0.
\end{aligned}
\]
The function is strictly decreasing in $s$ on this interval, so it has one and
only one zero there. Since $k=n/s>0$, this is equivalent to the unique
\emph{corridor} root $r_{S,c}$ of $L_S(n,k)+ck=0$, and proves (3.6).

Finally,
\[
  L_S(n,k)+ck=k\Psi_{n,S,c}(s),
  \qquad
  \frac{ds}{dk}=-\frac sk.
\]
Hence
\begin{equation*}
\begin{split}
  \frac{\partial}{\partial k}\{L_S(n,k)+ck\}
  &=\Psi_{n,S,c}(s)-s\Psi'_{n,S}(s)\\
  &=\frac2qL^2+O_A(L\ell),
\end{split}
  \tag{3.19}
\end{equation*}
because $s=(2/q)L+O_A(\ell)$, (3.18) holds, and
$\Psi_{n,S,c}(s)=O_A(\ell)$ throughout the corridor. This proves (3.7) and
$\varepsilon_{n,A}^{\mathrm{slope}}\to0$ with one eventuality threshold for
the complete phase.
\end{proof}
\section{\texorpdfstring{A uniform lower location for
$\chi$}{A uniform lower location for chi}}
\label{a-valid-unrestricted-lower-location-for-chi}

Let $r_+(n)=r_{S_+,0}$ be the unrestricted-support corridor zero from
Lemma~\ref{lem:uniform-root-slope-dual}, write
$L_+(n,k):=L_{S_+}(n,k)$, and put
\begin{equation*}
  k_\chi^-:=\lfloor r_+(n)\rfloor-\lceil\log n\rceil.
  \tag{4.1}
\end{equation*}
This is a deterministic integer. Since
$r_+(n)=\Theta(n/\log n)$ uniformly in the phase, one has
$1\le k_\chi^-<n$ for all sufficiently large $n$.

For an integer profile
$\mathbf k=(k_i)_{-1\le i\le\alpha-1}$ feasible on $n$ vertices, let
$X_{\mathbf k}$ be the number of unordered proper colorings having exactly
$k_i$ classes of size $\alpha-i$. Direct enumeration gives
\begin{equation*}
  \Exp{X_{\mathbf k}}
  =\frac{n!}{\prod_i((\alpha-i)!)^{k_i}k_i!}
   2^{-\sum_i k_i\binom{\alpha-i}{2}}.
  \tag{4.2}
\end{equation*}
For an integer $k$, let $\mathcal K_{n,k,\alpha+1}$ be the set of these
profiles satisfying
\[
  \sum_i k_i=k,
  \qquad
  \sum_i(\alpha-i)k_i=n,
\]
and define
\[
  E_{n,k,\alpha+1}
  :=\sum_{\mathbf k\in\mathcal K_{n,k,\alpha+1}}
    \Exp{X_{\mathbf k}}.
\]
Thus $E_{n,k,\alpha+1}$ is the expected number of unordered proper colorings
with exactly $k$ nonempty parts, each of size at most $\alpha+1$.
When the feasible profile set is empty, we use the extended-real convention
$\log E_{n,k,\alpha+1}=-\infty$.

\subsection*{The profile sum}

There are $\alpha+1=O(\log n)$ profile coordinates, and each coordinate lies
between zero and $n$. Hence
\begin{equation*}
  |\mathcal K_{n,k,\alpha+1}|
  \le(n+1)^{\alpha+1}
  =\exp\{O((\log n)^2)\}
  \tag{4.3}
\end{equation*}
uniformly in $k$ and in the phase. Keep the class-size factorials inside
$d_{\alpha-i}$ exact, and put
\[
  R(m):=\log(m!)-(m\log m-m),\qquad R(0):=0.
\]
For every feasible profile, exact cancellation in (4.2) gives
\[
  \log\Exp{X_{\mathbf k}}-L_{\mathbf k}
  =R(n)-\sum_iR(k_i).
\]
The uniform estimate (1.2) therefore gives
\[
  \log\Exp{X_{\mathbf k}}
  \le L_{\mathbf k}+O((\log n)^2).
\]
The error is uniform even when some $k_i=0$: there are $O(\!\log n)$ profile
coordinates and $|R(k_i)|=O(\!\log(n+1))$. For every $k$ feasible for
$S_+^{(n)}$, maximizing over the feasible profile and then summing (4.3) yields
\begin{equation*}
  \log E_{n,k,\alpha+1}
  \le L_+(n,k)+O((\log n)^2).
  \tag{4.3a}
\end{equation*}

\subsection*{Displacement below the root}

Set
\[
  \Delta_n:=r_+(n)-k_\chi^-.
\]
The floor and ceiling in (4.1) give the deterministic bounds
\begin{equation*}
  \log n\le\Delta_n<\log n+2.
  \tag{4.3b}
\end{equation*}
We next verify that the entire interval
$[k_\chi^-,r_+(n)]$ lies in the derivative corridor of
Lemma~\ref{lem:uniform-root-slope-dual}. Write
$s(k)=n/k$. Uniformly on this interval,
$k=\Theta(n/\log n)$ and $s(k)=\Theta(\log n)$. Therefore
\[
  |s(k_\chi^-)-s(r_+)|
  \le
  \sup_{k\in[k_\chi^-,r_+]}
  \frac{n}{k^2}\,\Delta_n
  =O\!\left(\frac{(\log n)^3}{n}\right)
  =o\!\left(\frac{\log\log n}{\log n}\right).
\]
Thus, after one phase-independent threshold, every point of the interval
satisfies
\[
  \left|\frac nk-s_0\right|
  \le (A_*+1)\frac{\log\log n}{\log n}.
\]

Equation (3.7), with $A=A_*+1$, $S=S_+$, and $c=0$, consequently gives an
absolute $c_*>0$ such that
\begin{equation*}
  \frac{d}{dk}L_+(n,k)\ge c_*(\log n)^2
  \qquad
  (k_\chi^-\le k\le r_+)
  \tag{4.3c}
\end{equation*}
for all sufficiently large $n$, uniformly in the phase. Since
$L_+(n,r_+)=0$, the mean-value theorem and (4.3b) imply
\begin{equation*}
  L_+(n,k_\chi^-)
  \le-c_*(\log n)^2\Delta_n
  \le-c_*(\log n)^3.
  \tag{4.4}
\end{equation*}
Combining (4.3a) and (4.4), and decreasing the constant once, gives an
absolute constant $c_\chi>0$ and a deterministic sequence
\[
  \varepsilon_n^{\mathrm{prof}}
  :=
  \exp\{-c_\chi(\log n)^3\}
  \longrightarrow0
\]
such that
\begin{equation*}
  E_{n,k_\chi^-,\alpha+1}
  \le\varepsilon_n^{\mathrm{prof}}.
  \tag{4.4a}
\end{equation*}

\subsection*{Removing the size cap}

Let
\[
  \mathcal A_n:=\{\alpha(G_n)\le\alpha+1\}.
\]
Equation (2.9) gives a deterministic phase-uniform sequence
$\varepsilon_n^{\mathrm{cap}}\to0$ such that
\[
  \Prob{\mathcal A_n^c}
  \le\varepsilon_n^{\mathrm{cap}}.
\]
On $\mathcal A_n$, every class of every proper coloring has size at most
$\alpha+1$.

Suppose that $\chi(G_n)\le k_\chi^-$ and choose a proper coloring with
$h\le k_\chi^-$ nonempty classes. If $h<k_\chi^-$, then some class contains at
least two vertices: otherwise $h=n$, contradicting $h<k_\chi^-<n$. Splitting
such a class into two nonempty subsets preserves independence. Repeating this
operation produces a proper coloring with exactly $k_\chi^-$ nonempty classes.
The size cap is preserved under splitting. Therefore
\[
  \{\chi(G_n)\le k_\chi^-\}\cap\mathcal A_n
  \subseteq
  \{\text{an exactly $k_\chi^-$, $(\alpha+1)$-bounded coloring exists}\}.
\]
Markov's inequality and (4.4a) now give
\begin{equation*}
\begin{aligned}
  \Prob{\chi(G_n)\le k_\chi^-}
  &\le
  \Prob{\mathcal A_n^c}
  +E_{n,k_\chi^-,\alpha+1}\\
  &\le
  \varepsilon_n^{\mathrm{cap}}
  +\varepsilon_n^{\mathrm{prof}}
  \longrightarrow0.
\end{aligned}
  \tag{4.5}
\end{equation*}
Equivalently,
\begin{equation*}
  \Prob{\chi(G_n)>k_\chi^-}\longrightarrow1.
  \tag{4.6}
\end{equation*}

Finally, (4.3b) also records the location precision:
\begin{equation*}
  |k_\chi^--r_+(n)|
  <\log n+2
  =o\!\left(\frac{n}{(\log n)^3}\right).
  \tag{4.7}
\end{equation*}
Thus both the probability statement and the location error hold with one set
of phase-independent thresholds along the full sequence of integers.

\section{The four-size signed first-moment advantage}\label{the-four-size-signed-first-moment-advantage}

Recall that a signed cocoloring witness is a profile partition with one
independent-or-complete declaration on each class. It is realized when each
class induces the declared graph. These declarations are auxiliary counting
data attached to the witness; $\zeta(G)$ still counts only the classes.

The four-size comparison has two steps. Restricting the deficits to
\(S_4=\{2,3,4,5\}\) must cost strictly less than \(\log 2\) per part. The
\(2^k\) sign choices then convert this strict entropy margin into a
macroscopic root separation. Lemma~\ref{lem:entropy-gap} proves the margin, and the slope
estimate in Lemma~\ref{lem:uniform-root-slope-dual} converts it into
displacement.

\Needspace{11\baselineskip}
For \(S\in\{S_+,S_4\}\), introduce the tilted weights, partition
function, and mean map

\[
 w_i(\lambda):=\exp\!\left(\lambda i-\frac{\log 2}{2}i^2\right),
 \qquad
 Z_S(\lambda):=\sum_{i\in S}w_i(\lambda),
\]

\[
 M_S(\lambda):=
 \frac{\sum_{i\in S}i\,w_i(\lambda)}{Z_S(\lambda)}
 =\frac{d}{d\lambda}\log Z_S(\lambda).
\]

The Gaussian factor makes both sums finite. Moreover,
\(M_S'(\lambda)=\operatorname{Var}_{S,\lambda}(i)>0\), and the endpoint
limits of \(M_S\) are the endpoints of the convex hull of \(S\). Thus, for
every \(T\in(2,5)\), there is a unique tilt
\(\lambda_S(T)\) with \(M_S(\lambda_S(T))=T\). The Gibbs identity (3.13a)
proves that the optimizer in (3.9a) is

\[
 p_i=\frac{w_i(\lambda_S(T))}{Z_S(\lambda_S(T))}
 =\frac{e^{\lambda_S(T)i-{\log 2}i^2/2}}
        {\sum_{j\in S}e^{\lambda_S(T)j-{\log 2}j^2/2}}.       \tag{5.1}
\]
This also gives the dual representation

\[
 \mathcal F_S(T)
 =\inf_{\lambda\in\mathbb R}
   \{\log Z_S(\lambda)-\lambda T\}
 =\log Z_S(\lambda_S(T))-\lambda_S(T)T.
\]

Since $T_0=1+2/{\log 2}-\delta=T_\delta$, introduce the entropy loss
$D_4$ and the uniform certificate $\gamma_4$ by

\[
  D_4(\delta):=\mathcal F_{S_+}(T_\delta)
               -\mathcal F_{S_4}(T_\delta),\qquad
  A_4(\delta)=\log 2-D_4(\delta),\qquad
  \gamma_4:=\log\frac{200}{153}.                            \tag{5.2}
  \label{eq:phase-function-definition-v3}
\]
Since $M'_S(\lambda)=\operatorname{Var}_{S,\lambda}(i)>0$, the inverse mean
map $T\mapsto\lambda_S(T)$ is continuous.  The displayed dual representation
therefore shows directly that $A_4$ is continuous on $[0,1]$.

We next show that $A_4$ is nonconstant. Put $\lambda_*=3q$, define
$\widehat w_i:=w_i(\lambda_*)$, and set
\[
 T_*:=M_{S_4}(\lambda_*)
 =\frac{96+68\sqrt2}{32+20\sqrt2}
 =3+\frac{8\sqrt2}{32+20\sqrt2}.
\]
Then $49/16<T_*<17/5$.  Hence
$\delta_*:=1+2/q-T_*$ lies in $(0,1)$ by (2.0).  For $S_+$, the weights
$2^{3i-i^2/2}$ with $-1\le i\le7$ pair under $i\mapsto6-i$ and have mean
$3$; their total mass exceeds $16$. The remaining tail starts with
$\widehat w_8=2^{-8}$ and has successive ratio at most $1/32$, so
\[
 \sum_{i\ge8}(i-3)\widehat w_i
 \le2^{-8}\sum_{j\ge0}(5+j)32^{-j}<1.
\]
Consequently $M_{S_+}(3q)<3+1/16<T_*$.  Since both mean maps are strictly
increasing,
$\lambda_{S_+}(T_*)>3q=\lambda_{S_4}(T_*)$.  Finally
$\mathcal F'_S(T)=-\lambda_S(T)$, and therefore
\[
 A_4'(\delta_*)
 =\lambda_{S_4}(T_*)-\lambda_{S_+}(T_*)<0.
\]
Thus $A_4$ is nonconstant as well as continuous.

\begin{lemma}[Uniform entropy certificate]
\label{lem:entropy-gap}

For every \(0\le \delta\le 1\),

\[
 0\le D_4(\delta)<\log\frac{153}{100},\qquad
 A_4(\delta)>\gamma_4.                                          \tag{5.3}
\]

\end{lemma}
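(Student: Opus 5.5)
The plan is to prove the two bounds in (5.3) separately. The lower bound is a support-inclusion argument. The upper bound reduces, through the dual representation, to an explicit inequality on ratios of Gaussian weights, checked on a compact range of tilts.

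\emph{Nonnegativity.} Since $S_4\subset S_+$, every probability vector feasible for the $S_4$ problem in (3.9a) is also feasible for the $S_+$ problem, with the same objective value (finite support gives a finite quadratic moment). Hence $\mathcal F_{S_+}(T)\ge\mathcal F_{S_4}(T)$, so $D_4(\delta)\ge0$.

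\emph{Reduction of the upper bound.} Fix $\delta\in[0,1]$, put $T=T_\delta\in[2/q,1+2/q]$, and let $\lambda_4:=\lambda_{S_4}(T)$. The dual formula after (5.1) gives $\mathcal F_{S_4}(T)=\log Z_{S_4}(\lambda_4)-\lambda_4T$. It also gives $\mathcal F_{S_+}(T)\le\log Z_{S_+}(\lambda)-\lambda T$ for every $\lambda$; we use this with $\lambda=\lambda_4$. Subtracting, the $\lambda_4T$ terms cancel and
\[
  D_4(\delta)\le\log\frac{Z_{S_+}(\lambda_4)}{Z_{S_4}(\lambda_4)}
  =\log\bigl(1+R_{\mathrm{lo}}(\lambda_4)+R_{\mathrm{hi}}(\lambda_4)\bigr).
\]
Here
\[
  R_{\mathrm{lo}}(\lambda):=\frac{\sum_{i\in\{-1,0,1\}}w_i(\lambda)}{Z_{S_4}(\lambda)},
  \qquad
  R_{\mathrm{hi}}(\lambda):=\frac{\sum_{i\ge6}w_i(\lambda)}{Z_{S_4}(\lambda)}.
\]
So it suffices to show $R_{\mathrm{lo}}(\lambda_4)+R_{\mathrm{hi}}(\lambda_4)<53/100$ for all such $T$. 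This inequality is strict pointwise. Because $A_4$ is continuous on the compact set $[0,1]$, a pointwise strict bound is all that (5.3) requires.

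\emph{Range of tilts and monotone splitting.} $M_{S_4}$ is strictly increasing, so $\lambda_4$ ranges over the interval $[\lambda_{\mathrm{lo}},\lambda_{\mathrm{hi}}]$ with $M_{S_4}(\lambda_{\mathrm{lo}})=2/q$ and $M_{S_4}(\lambda_{\mathrm{hi}})=1+2/q$. I would enclose these endpoints by rational tilts: evaluate $M_{S_4}$ at a few explicit values such as multiples of $q$, where the weights become powers of $2$ and of $\sqrt2$, as in the computation of $T_*$. I would use a sharper rational bracket on $q$ than (2.0) if needed.

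Each ratio $w_i/w_j$ equals $\exp(\lambda(i-j))$ times a constant. Therefore $R_{\mathrm{lo}}$ is decreasing in $\lambda$, because every low index lies below every index of $S_4$, and $R_{\mathrm{hi}}$ is increasing. Now partition $[\lambda_{\mathrm{lo}},\lambda_{\mathrm{hi}}]$ into finitely many subintervals $[a_j,a_{j+1}]$. On each one,
\[
  R_{\mathrm{lo}}+R_{\mathrm{hi}}\le R_{\mathrm{lo}}(a_j)+R_{\mathrm{hi}}(a_{j+1}).
\]
The tail $\sum_{i\ge6}w_i$ is bounded by its first term times a geometric series, exactly as in the nonconstancy computation. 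With rational bounds, each subinterval then gives a finite numerical inequality.

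\emph{Main obstacle.} The main difficulty is this rigorous numerical certification. The constant $153/100$ is presumably close to the true maximum of $1+R_{\mathrm{lo}}+R_{\mathrm{hi}}$. The worst cases are near the two phase endpoints: the mass on $\{-1,0,1\}$ matters near $T=2/q$, and the mass on $i\ge6$ matters near $T=1+2/q$. The mesh may therefore need to be fine there, with tight control of $q$.

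If the choice $\lambda=\lambda_4$ proves too lossy at some phase, the fallback is to evaluate $\log Z_{S_+}(\lambda)-\lambda T$ at a better test tilt. One candidate is a certified bracket of $\lambda_{S_+}(T)$ itself, with $\mathcal F_{S_4}$ bounded from below through an explicit feasible $S_4$ distribution. This gives two one-sided certificates instead of one.
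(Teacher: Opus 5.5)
Your proposal is the paper's proof: nonnegativity from $S_4\subset S_+$, the bound $D_4(\delta)\le\log\bigl(1+R_{\mathrm{low}}(\lambda_4)+R_{\mathrm{high}}(\lambda_4)\bigr)$ from evaluating the $S_+$ dual at $\lambda_4=\lambda_{S_4}(T_\delta)$, a bracket on $\lambda_4$ from computing $M_{S_4}$ at multiples of $\log 2$, and a monotone splitting of the two ratios. The numerical step you defer is much less delicate than you fear, and the constant $153/100$ is far from tight: by my estimate, $1+R_{\mathrm{low}}+R_{\mathrm{high}}$ stays near $1.27$ or below along $\lambda_4$. The paper needs only the bracket $2\log 2<\lambda_4<\frac92\log 2$, one split at $3\log 2$, and the four values $R_{\mathrm{low}}(2\log 2)<\frac{51}{100}$, $R_{\mathrm{high}}(3\log 2)<\frac1{50}$, $R_{\mathrm{low}}(3\log 2)<\frac3{25}$, $R_{\mathrm{high}}(\frac92\log 2)<\frac14$, so no fine mesh and no bound on $\log 2$ sharper than (2.0) is required.
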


\begin{proof}
Fix \(\delta\in[0,1]\), put
\(T_0=1+2/{\log 2}-\delta\), and abbreviate

\[
 \lambda_4:=\lambda_{S_4}(T_0).
\]

We begin by bracketing this tilt. At \(\lambda=2{\log 2}\), set
\(j=i-2\), a bijection from \(S_4\) onto \(\{0,1,2,3\}\).
Substituting \(i=j+2\) gives

\[
\begin{aligned}
 w_{j+2}(2{\log 2})
 &=\exp\!\left(2{\log 2}(j+2)
       -\frac{{\log 2}}2(j+2)^2\right)\\
 &=\exp\!\left(2{\log 2}-\frac{{\log 2}}2j^2\right)
 =4\,2^{-j^2/2}.
\end{aligned}
\]

Consequently, reindexing the two sums in the definition of the mean gives

\[
\begin{aligned}
 Z_{S_4}(2{\log 2})
 &=\sum_{i=2}^{5}w_i(2{\log 2})
   =4\sum_{j=0}^{3}2^{-j^2/2},\\
 \sum_{i=2}^{5}i\,w_i(2{\log 2})
 &=4\sum_{j=0}^{3}(j+2)2^{-j^2/2}\\
 &=8\sum_{j=0}^{3}2^{-j^2/2}
   +4\sum_{j=0}^{3}j2^{-j^2/2}.
\end{aligned}
\]

Dividing the second equality by the first gives

\[
 \begin{aligned}
 M_{S_4}(2{\log 2})
 &=2+\frac{\sum_{j=0}^{3}j2^{-j^2/2}}
          {\sum_{j=0}^{3}2^{-j^2/2}}\\
 &<2+\frac{\sum_{j\ge0}j2^{-j^2/2}}
          {\sum_{j\ge0}2^{-j^2/2}}
 <2+\frac45<\frac2{\log 2}\le T_0.
 \end{aligned}
 \tag{5.4}
\]

We verify the first strict inequality in (5.4) directly. Put
\[
 Z_0=\sum_{j=0}^{3}2^{-j^2/2},\quad
 N_0=\sum_{j=0}^{3}j2^{-j^2/2},\quad
 Z_1=\sum_{j=4}^{\infty}2^{-j^2/2},\quad
 N_1=\sum_{j=4}^{\infty}j2^{-j^2/2}.
\]
Here \(N_0/Z_0\le3\), while \(N_1/Z_1\ge4\).  Since \(Z_0,Z_1>0\),
\[
 \frac{N_0+N_1}{Z_0+Z_1}-\frac {N_0}{Z_0}
 =\frac{Z_0N_1-N_0Z_1}{Z_0(Z_0+Z_1)}>0.
\]
Thus adjoining the terms whose new indices satisfy \(j\ge4\) strictly
increases the weighted mean. For the next inequality, put
\(a_j=j2^{-j^2/2}\). For \(j\ge4\),
\[
 \frac{a_{j+1}}{a_j}
 =\frac{j+1}{j}\,2^{-(2j+1)/2}\le\frac1{16},
\]
and \(a_4=1/64\). Hence
\[
 \sum_{j\ge4}j2^{-j^2/2}
 \le\frac{1/64}{1-1/16}=\frac1{60}.
\]
Writing \(\vartheta=2^{-1/2}\), retaining \(j=0,1,2\) in the denominator and
\(j=0,1,2,3\) in the numerator gives
\[
 \frac{\sum_{j\ge0}j2^{-j^2/2}}
      {\sum_{j\ge0}2^{-j^2/2}}
 <\frac{19\vartheta/16+1/2+1/60}{5/4+\vartheta}<\frac45,
\]
where the last inequality follows from \(\vartheta<71/100\). Finally,
\(2/3<\log 2<5/7\) gives \(2+4/5<2/\log 2\).

At \(\lambda=9{\log 2}/2\), put \(t=2^{-1/8}\). The identity
\[
 \frac{9\log 2}{2}i-\frac{\log 2}{2}i^2
 =\frac{\log 2}{8}\{81-(2i-9)^2\}
\]
gives
\[
 w_i(9{\log 2}/2)=2^{81/8}t^{(2i-9)^2}.
\]
Thus the four weights, in the order
\(i=2,3,4,5\), are proportional to

\[
 t^{25},\quad t^9,\quad t,\quad t.
\]

\Needspace{6\baselineskip}
The numerator of \(M_{S_4}(9{\log 2}/2)-4\) is therefore

\[
 t-t^9-2t^{25}=t(1-t^8-2t^{24})=t/4>0,                 \tag{5.5}
\]
and the denominator is positive.  Hence this mean is greater than four,
whereas \(T_0\le1+2/{\log 2}<4\).  Since \(M_{S_4}\) is strictly
increasing, the two mean comparisons give

\[
 2{\log 2}<\lambda_4<9{\log 2}/2.                                     \tag{5.6}
\]

For a tilt \(\lambda\), define the omitted low- and high-deficit ratios

\[
 R_{\mathrm{low}}(\lambda):=
 \frac{\sum_{i=-1}^{1}w_i(\lambda)}{Z_{S_4}(\lambda)},
 \qquad
 R_{\mathrm{high}}(\lambda):=
 \frac{\sum_{i\ge6}w_i(\lambda)}{Z_{S_4}(\lambda)}.
\]

Let \(M_A(\lambda)\) denote the mean of \(i\) under the weights
\(w_i(\lambda)\) restricted to an index set \(A\). Differentiation gives
\[
\begin{aligned}
 \frac{d}{d\lambda}\log R_{\mathrm{low}}(\lambda)
 &=M_{\{-1,0,1\}}(\lambda)-M_{S_4}(\lambda)
 \le 1-2=-1,\\
 \frac{d}{d\lambda}\log R_{\mathrm{high}}(\lambda)
 &=M_{\{6,7,\ldots\}}(\lambda)-M_{S_4}(\lambda)
 \ge 6-5=1.
\end{aligned}
\]
Thus \(R_{\mathrm{low}}\) is decreasing and \(R_{\mathrm{high}}\) is
increasing on \(\mathbb R\).
The following direct calculations and Gaussian-tail bounds give

\[
 R_{\mathrm{low}}(2{\log 2})<\frac{51}{100},\quad
 R_{\mathrm{high}}(3{\log 2})<\frac1{50},\quad
 R_{\mathrm{low}}(3{\log 2})<\frac3{25},\quad
 R_{\mathrm{high}}(9{\log 2}/2)<\frac14.                 \tag{5.7}
\]

At \(2{\log 2}\), with
\(\vartheta=2^{-1/2}\),

\[
 R_{\mathrm{low}}(2{\log 2})
 =\frac{\vartheta+1/4+\vartheta/16}
        {1+\vartheta+1/4+\vartheta/16}<51/100.
\]

At \(\lambda=3\log 2\),
\[
 w_i(3\log 2)
 =\exp\!\left((\log 2)\left(3i-\frac{i^2}{2}\right)\right)
 =2^{9/2}2^{-(i-3)^2/2}.
\]
Set \(\vartheta=2^{-1/2}\). After canceling the common factor \(2^{9/2}\),
the weight at index \(i\) is \(\vartheta^{(i-3)^2}\). Thus the four
\(S_4\)-weights, for \(i=2,3,4,5\), are
\[
 \vartheta,\quad 1,\quad \vartheta,\quad \vartheta^4,
\]
and
\[
 2^{-9/2}Z_{S_4}(3\log 2)
 =1+2\vartheta+\vartheta^4=\frac54+2\vartheta.
\]
For the omitted low indices \(i=-1,0,1\), the weights are
\[
 (\vartheta^{16},\vartheta^9,\vartheta^4)
 =\left(\frac1{256},\frac{\vartheta}{16},\frac14\right).
\]
For the omitted high indices \(i\ge6\), they are
\[
 \vartheta^9+\vartheta^{16}+\sum_{r\ge5}\vartheta^{r^2}.
\]
The first term in the remaining sum is
\(\vartheta^{25}=\vartheta/4096\), and the ratio between successive terms is
at most \(\vartheta^{11}<1/32\). Therefore
\[
 \sum_{r\ge5}\vartheta^{r^2}
 \le\frac{\vartheta^{25}}{1-1/32}<\frac1{3968}.
\]
\Needspace{8\baselineskip}
It follows that
\[
 R_{\mathrm{low}}(3\log 2)
 =\frac{1/256+\vartheta/16+1/4}{5/4+2\vartheta}<\frac3{25},
\]
and
\[
 R_{\mathrm{high}}(3\log 2)
 \le\frac{\vartheta/16+1/256+1/3968}{5/4+2\vartheta}<\frac1{50},
\]
where the final inequalities follow from
\(7/10<\vartheta<71/100\).

Finally, at $\lambda=9\log 2/2$,
\[
  \frac{w_i(9\log 2/2)}{2^{81/8}}=t^{(2i-9)^2}.
\]
After dividing by the common factor $t$ of the $i=4,5$ weights, the high
indices $i=6,7,8$ contribute $t^8,t^{24},t^{48}$.
From \(i=8\) onward the ratio between successive terms is at most
\(1/16\), so the tail beginning with \(t^{48}\) is less than \(1/60\).
The normalized \(S_4\) denominator is \(2+t^8+t^{24}\). Hence
\(R_{\mathrm{high}}(9\log 2/2)<1/4\) follows from

\[
 3(t^8+t^{24})+4/60<2,                                  \tag{5.8}
\]
because \(t^8=1/2\) and \(t^{24}=1/8\).

If \(\lambda_4\le 3{\log 2}\), monotonicity and (5.7) give
\(R_{\mathrm{low}}(\lambda_4)+R_{\mathrm{high}}(\lambda_4)<53/100\); if
\(\lambda_4\ge 3{\log 2}\), they give
\(R_{\mathrm{low}}(\lambda_4)+R_{\mathrm{high}}(\lambda_4)<37/100\).
By the dual representation above,

\[
 \mathcal F_{S_4}(T_0)
 =\log Z_{S_4}(\lambda_4)-\lambda_4T_0,
\]
whereas evaluating the \(S_+\) dual function at the same parameter gives

\[
 \mathcal F_{S_+}(T_0)
 \le \log Z_{S_+}(\lambda_4)-\lambda_4T_0.
\]

Subtracting these two relations yields

\[
 \begin{aligned}
 D_4(\delta)
 &\le \log\frac{Z_{S_+}(\lambda_4)}{Z_{S_4}(\lambda_4)}\\
 &=\log\!\bigl(1+R_{\mathrm{low}}(\lambda_4)
                    +R_{\mathrm{high}}(\lambda_4)\bigr)
 <\log(153/100).
 \end{aligned}
\]

Since \(S_4\) is a subset of \(S_+\), the loss is
nonnegative. Subtracting from \(\log 2\) completes the proof.
\end{proof}

The following \(2^k\) gain is the one-partition identity isolated by
\citet[Proposition~6]{heckel-2025-difference}.  We repeat its short
argument because it is an input to the new root calculation.  For a fixed
partition into \(k\) classes, assigning each class the declaration
``independent'' or ``complete'' multiplies its first moment by \(2^k\):
every declaration prescribes all internal edge bits, and all \(2^k\)
declarations have probability \(2^{-\sum \binom{u}{2}}\). The declarations
are disjoint because every allowed class size is at least two for all
sufficiently large $n$.
Let
\(r_4^{\mathrm{co}}=r_{S_4,{\log 2}}\)
be the unique corridor zero of

\[
 L_{S_4}(n,k)+{\log 2}k.                                       \tag{5.9}
\]

Put
\[
  \Phi_n(k):=L_{S_4}(n,k)+(\log 2)k.
\]
Then $\Phi_n(r_4^{\mathrm{co}})=0$ by definition. To evaluate the same
function at the unrestricted root, let
\[
  T_+(n):=\alpha-\frac{n}{r_+(n)}
\]
and introduce the finite-$n$ support loss
\[
  D_{4,n}(T):=
  \mathcal F_{n,S_+}(T)-\mathcal F_{n,S_4}(T).
\]
By (3.6) and the fixed margin in (3.9), one has
$T_+(n)\in K_*\subset(2,5)$ for all sufficiently large $n$, uniformly in the
phase.  Thus $r_+$ is feasible for both supports before (3.8) is applied.
Since $L_{S_+}(n,r_+)=0$, the exact support-comparison identity (3.8) gives
\begin{equation*}
\begin{aligned}
  \Phi_n(r_+)
  &=L_{S_4}(n,r_+)-L_{S_+}(n,r_+)+(\log 2)r_+\\
  &=r_+\{\log 2-D_{4,n}(T_+(n))\}.
\end{aligned}
\tag{5.9a}
\end{equation*}
Identity (5.9a) is exact for finite $n$; limiting estimates enter only in the
comparison below.

We next compare the finite support loss at $T_+(n)$ with the limiting support
loss at the phase-center target $T_0$. Define the deterministic target
displacement
\[
  \varepsilon_n^{\mathrm{target}}
  :=|T_+(n)-T_0|.
\]
Equation (3.6) gives the equivalent phase-uniform statements
\begin{equation*}
  T_+(n)=T_0+O\!\left(\frac{\log\log n}{\log n}\right),
  \qquad
  \varepsilon_n^{\mathrm{target}}
  =O\!\left(\frac{\log\log n}{\log n}\right)
  \longrightarrow0.
  \tag{5.9b}
\end{equation*}
The common corridor lies inside the fixed compact interval $K_*$ for all
sufficiently large $n$.

The limiting dual value is differentiable, with
\[
  \mathcal F_S'(T)=-\lambda_S(T),
\]
because the envelope theorem cancels the derivative of the optimizing tilt.
Section~3 bounds the limiting tilts by one constant on $K_*$, so the two
limiting dual values have a common Lipschitz constant $C_{\mathrm{Lip}}$.
The deterministic finite-dual error from Section~3 satisfies
\[
  \sup_{T\in K_*}
  |\mathcal F_{n,S}(T)-\mathcal F_S(T)|
  \le\varepsilon_n^{\mathrm{dual}}
  \qquad(S=S_+,S_4).
\]
Consequently the single root error used in the final theorem,
\begin{equation*}
  \omega_n^{\mathrm{root}}
  :=2\varepsilon_n^{\mathrm{dual}}
    +2C_{\mathrm{Lip}}\varepsilon_n^{\mathrm{target}}
  \longrightarrow0
  \tag{5.9c}
\end{equation*}
is deterministic and phase-uniform. Indeed,
\[
\begin{aligned}
  |D_{4,n}(T_+(n))-D_4(\delta)|
  \le{}&
  |\mathcal F_{n,S_+}(T_+)-\mathcal F_{S_+}(T_+)|\\
  &+|\mathcal F_{n,S_4}(T_+)-\mathcal F_{S_4}(T_+)|\\
  &+|\mathcal F_{S_+}(T_+)-\mathcal F_{S_+}(T_0)|\\
  &+|\mathcal F_{S_4}(T_+)-\mathcal F_{S_4}(T_0)|\\
  \le{}&\omega_n^{\mathrm{root}}.
\end{aligned}
\]
Thus $\omega_n^{\mathrm{root}}$ is one deterministic error sequence valid
across the complete phase, including integer sequences approaching either
endpoint.

Substitution into (5.9a) yields the estimate used in the final theorem,
\begin{equation*}
  L_{S_4}(n,r_+)+(\log 2)r_+
  =r_+\{\log 2-D_4(\delta)+O(\omega_n^{\mathrm{root}})\}
  =r_+\{\log 2-D_4(\delta)+o(1)\}.
  \tag{5.10}
\end{equation*}
The $o(1)$ in (5.10) is bounded uniformly by the sum of the finite-dual and
target-displacement errors.

Finally, Lemma~\ref{lem:entropy-gap} gives
$\log 2-D_4(\delta)\ge\gamma_4>0$. Hence (5.10) implies
$\Phi_n(r_+)>0$ for all sufficiently large $n$, uniformly in the phase. The
normalized slope error in (3.7) tends to zero, so the derivative of $\Phi_n$
is positive throughout the common root corridor after one phase-independent
threshold. Since $\Phi_n(r_4^{\mathrm{co}})=0$, this proves
\[
  r_4^{\mathrm{co}}<r_+.
\]
Both roots lie in the corridor (3.6), so the whole interval between them
is inside a fixed corridor.  The mean-value theorem therefore gives a
point \(\xi_n\in(r_4^{\mathrm{co}},r_+)\) such that
\[
 \Phi_n(r_+)-\Phi_n(r_4^{\mathrm{co}})
 =\Phi_n'(\xi_n)(r_+-r_4^{\mathrm{co}}).
\]
Now \(r_+=({\log 2}/2+o(1))n/{\log n}\), and (3.7), uniformly at
\(\xi_n\), gives
\[
 \Phi_n'(\xi_n)=\frac2{\log 2}(\log n)^2
                 +O((\log n)(\log\log n)).
\]
Substituting this and (5.10) into the preceding identity yields

\[
 r_+-r_4^{\mathrm{co}}
 =\left(\frac{(\log 2)^2}{4}\{{\log 2}-D_4(\delta)\}+o(1)\right)
   \frac n{(\log n)^3}.                                        \tag{5.11}
\]

\Needspace{6\baselineskip}
For the fixed-offset selection below, we use the following coarse consequence
of (5.11): for all sufficiently large \(n\),

\[
 r_+-r_4^{\mathrm{co}}\ge\frac{(\log 2)^2\gamma_4}{8}\frac n{(\log n)^3}.     \tag{5.12}
\]

Choose an integer a fixed distance above the signed root:

\[
 k_{\mathrm{co}}=\left\lceil r_4^{\mathrm{co}}\right\rceil+16. \tag{5.13}
\]

Thus
\[
  16\le k_{\mathrm{co}}-r_4^{\mathrm{co}}<17.
\]
By (5.12), $k_{\mathrm{co}}<r_+$ for all sufficiently large $n$.
Consequently the common corridor in
Lemma~\ref{lem:uniform-root-slope-dual} contains the entire interval
from $r_4^{\mathrm{co}}$ to $k_{\mathrm{co}}$.  Every real point of this
interval therefore has target deficit mean in the fixed compact interval
$K_*\subset(2,5)$ and is feasible for the real $S_4$ optimization.  In
particular this holds at the selected integer before the type counts are
rounded.  The convenient choice $16$ leaves a fixed explicit margin in (5.19).

We now construct the \emph{exact signed witness profile}. Its total number of
parts is a fixed distance above the signed four-size root; the type counts will
be chosen so that both finite-\(n\) conservation constraints hold exactly.

At $k=k_{\mathrm{co}}$, let \(p_i^{(n)}\) be the
finite-\(n\) maximizer of \(L_{S_4}\).  Let
\(\lambda_{n,\mathrm{def}}\) denote the Lagrange multiplier for the
deficit-mean constraint, with the sign convention that its contribution
is \(e^{\lambda_{n,\mathrm{def}}i}\).  The Lagrange equations and the
decomposition (3.10) then give

\[
 p_i^{(n)}\ \propto\ d_{\alpha-i}^{-1}e^{\lambda_{n,\mathrm{def}}i}
 \ \propto\ e^{\widehat\lambda_ni+h_n(i)},\qquad
 \widehat\lambda_n:=B_n+\lambda_{n,\mathrm{def}}.          \tag{5.14}
\]

Since
$e^{B_ni}e^{\lambda_{n,\mathrm{def}}i}
=e^{(B_n+\lambda_{n,\mathrm{def}})i}=e^{\widehat\lambda_ni}$,
the quantity \(\lambda_{n,\mathrm{def}}\) is the multiplier in the exact
finite-$n$ coordinates, whereas \(\widehat\lambda_n\) is the tilt in the
limiting Gaussian coordinates. (If a size-coordinate
multiplier \(\tau_n\) is used instead, then
\(\lambda_{n,\mathrm{def}}=-\tau_n\).)
The target deficit mean is $\alpha-n/k_{\mathrm{co}}$.  The corridor estimate
and $k_{\mathrm{co}}-r_4^{\mathrm{co}}=O(1)$ give,
uniformly in the phase,
\[
 \left|\alpha-\frac n{k_{\mathrm{co}}}-T_0\right|
 =O\!\left(\frac{\log\log n}{\log n}\right)
  +O\!\left(\frac{(\log n)^2}{n}\right)
 \longrightarrow0.
\]
Lemma~\ref{lem:uniform-root-slope-dual} therefore makes these proportions
converge uniformly to (5.1), with
\(\widehat\lambda_n-\lambda_{S_4}(T_0)\to0\) uniformly in the phase.
Equation (5.6) also shows
directly that the four limiting weights differ by at most a fixed
factor. Thus

\[
 \min_{2\le i\le5}p_i^{(n)}\ge c_p>0                    \tag{5.15}
\]
uniformly in the phase.

\Needspace{8\baselineskip}
Choose preliminary integers $\widetilde k_i$ with
$|\widetilde k_i-k_{\mathrm{co}}p_i^{(n)}|\le1$.
Define the signed constraint errors

\[
 e_0=\sum_i\widetilde k_i-k_{\mathrm{co}},\qquad
 e_1=\sum_i i\widetilde k_i-(\alpha k_{\mathrm{co}}-n),
\]
so \(e_0,e_1=O(1)\), and add

\[
 \Delta k_2=e_1-3e_0,\qquad
 \Delta k_3=2e_0-e_1.                                   \tag{5.16}
\]

Set $\Delta k_4=\Delta k_5=0$, $k_i:=\widetilde k_i+\Delta k_i$ for
$2\le i\le5$, and $\mathbf k:=(k_2,k_3,k_4,k_5)$.

Indeed, \(\Delta k_2+\Delta k_3=-e_0\) and
\(2\Delta k_2+3\Delta k_3=-e_1\). This enforces both constraints
exactly and changes only \(O(1)\) counts.  Put
\(\mathbf k^*=(k_{\mathrm{co}}p_i^{(n)})_{i=2}^5\) and let
\(\Delta^{\mathrm{tot}}=\mathbf k-\mathbf k^*\) be the total rounding
and correction displacement.  There is a phase-independent constant
$C_\Delta$ such that
$\|\Delta^{\mathrm{tot}}\|_\infty\le C_\Delta$.  Moreover,
$\Delta^{\mathrm{tot}}$ satisfies both homogeneous constraint equations,
because $\mathbf k^*$ and $\mathbf k$ obey the same two constraints; thus it
is tangent to the feasible affine plane.

The entire rounding segment remains in the positive orthant. Indeed, by
(5.15), $k_i^*\ge c_pk_{\mathrm{co}}$, and hence, uniformly in
$t\in[0,1]$ and in the phase,
\[
  k_i^*+t\Delta_i^{\mathrm{tot}}
  \ge c_pk_{\mathrm{co}}-C_\Delta
  \ge \frac{c_p}{2}k_{\mathrm{co}}
\]
for all sufficiently large $n$. In particular the correction preserves
positivity. At the exact constrained optimizer the linear term in every
tangent direction vanishes, while at a positive vector $x$ the Hessian of the
entropy term is $-\operatorname{diag}(1/x_i)$. Its operator norm along the
rounding segment is therefore at most $2/(c_pk_{\mathrm{co}})$. Taylor's
formula with integral remainder gives

\[
 \left|L_{\mathbf k^*+\Delta^{\mathrm{tot}}}-L_{\mathbf k^*}\right|
 \le \frac12\sup_{0\le t\le1}
 \left\|\nabla^2L_{\mathbf k^*+t\Delta^{\mathrm{tot}}}\right\|_{\mathrm{op}}
 \|\Delta^{\mathrm{tot}}\|_2^2
 =O(1/k_{\mathrm{co}}).
\]

The resulting exact integer vector satisfies

\[
 u_i=\alpha-i,\qquad k_i=\Theta(n/{\log n})\quad(2\le i\le5).   \tag{5.17}
\]

Let $Z_{\mathbf k}^{\mathrm{sgn}}$ be the number of pairs consisting of a
partition with exactly $k_i$ classes of size $u_i$ for $2\le i\le5$ and a
declaration of each class as independent or complete, such that every
declaration is realized by $G_n$. Direct counting gives the exact identity

\[
 \Exp{Z_{\mathbf k}^{\mathrm{sgn}}}
 =2^{k_{\mathrm{co}}}\frac{n!}{\prod_i(u_i!)^{k_i}k_i!}
   2^{-\sum_i k_i\binom{u_i}{2}},                        \tag{5.18}
\]
Keeping every \(u_i!\) exact and applying (1.2) only to \(n!\) and the four
profile-multiplicity factorials \(k_i!\) then gives

\[
 \log \Exp{Z_{\mathbf k}^{\mathrm{sgn}}}
 =L_{S_4}(n,k_{\mathrm{co}})+{\log 2}k_{\mathrm{co}}+O({\log n})
 \ge c_Z(\log n)^2                                               \tag{5.19}
\]

for a phase-independent \(c_Z>0\).  Indeed,
$\Phi_n(r_4^{\mathrm{co}})=0$ and
$16\le k_{\mathrm{co}}-r_4^{\mathrm{co}}<17$.  Equation~(3.7), with
$S=S_4$ and $c=\log 2$, gives a phase-independent $c_1>0$ such that
$\Phi_n'(k)\ge c_1(\log n)^2$ throughout this interval for all sufficiently
large $n$.  Integration therefore gives
\[
  L_{S_4}(n,k_{\mathrm{co}})+(\log 2)k_{\mathrm{co}}
  =\Phi_n(k_{\mathrm{co}})
  \ge16c_1(\log n)^2.
\]
The tangent rounding loss above is $O(k_{\mathrm{co}}^{-1})$, and the five
Stirling approximations contribute $O(\log n)$ in total.  Both are smaller
than the integrated margin; decreasing $c_Z$ proves the assertion. Every
\(u_i\) tends to infinity, so the independent and complete
declarations of one part are disjoint. Thus
\(Z_{\mathbf k}^{\mathrm{sgn}}>0\)
implies an actual cocoloring with \(k_{\mathrm{co}}\) parts.

Finally, (4.1), (5.11), and (5.13) give

\[
\begin{aligned}
 k_\chi^- -k_{\mathrm{co}}
 &=\left[\frac{(\log 2)^2}{4}A_4(\delta)+o(1)\right]
       \frac n{(\log n)^3}\\
 &\ge\left[\frac{(\log 2)^2\gamma_4}{4}-o(1)\right]
       \frac n{(\log n)^3}.
\end{aligned}                                                   \tag{5.20}
\]

The second-moment analysis in Sections~8--9 produces a deterministic exponent
$\Lambda_n$ such that
\[
  \frac{\mathbb E[(Z_{\mathbf k}^{\mathrm{sgn}})^2]}
       {(\mathbb E Z_{\mathbf k}^{\mathrm{sgn}})^2}
  \le e^{\Lambda_n}
\]
and yields the scale hierarchy
\[
  k_{\mathrm{co}}\asymp\frac n{\log n},\qquad
  r_+-r_4^{\mathrm{co}}\asymp\frac n{(\log n)^3},\qquad
  \Lambda_n=o\!\left(\frac n{(\log n)^4}\right).
\]
Once the last estimate is available, its amplification cost satisfies
\[
  \frac{\sqrt{n\Lambda_n}}{\log n}
  =o\!\left(\frac n{(\log n)^3}\right),
\]
which is smaller than the root separation.

\section{Exact signed second-moment representation}\label{exact-signed-second-moment-representation}

For two partitions sharing exactly \(\ell\) whole classes,
\citet[Proposition~6]{heckel-2025-difference} bounded the number of joint
sign declarations by \(2^{2k-\ell}\), thereby transferring selected
ordinary-coloring overlap bounds to cocolorings. Lemma~6.1 sharpens this
comparison by retaining the entire overlap matrix and counting the compatible
signs exactly. The count is determined by the components of the graph of
cells of size at least two; whole common classes form a special case.

Temporarily label all slots within each of the four types. This
multiplies the unordered witness by the deterministic factor
\(\prod_i k_i!\) and hence does not change its normalized second
moment. Let $\mathcal I_{\mathrm{row}}$ and $\mathcal I_{\mathrm{col}}$ be the row- and column-slot index
sets. Choose two independent uniform ordered partitions of the profile, write
them as $(V_a)_{a\in\mathcal I_{\mathrm{row}}}$ and
$(W_b)_{b\in\mathcal I_{\mathrm{col}}}$, and set
$s_a:=|V_a|$ and $t_b:=|W_b|$. Define
\[
  \mathbf s:=(s_a)_{a\in\mathcal I_{\mathrm{row}}},
  \qquad
  \mathbf t:=(t_b)_{b\in\mathcal I_{\mathrm{col}}},
\]
and let $\mathcal R(\mathbf s,\mathbf t)$ be the finite set of nonnegative
integer matrices with row margins $\mathbf s$ and column margins $\mathbf t$.
Put

\[
 r_{ab}=|V_a\cap W_b|.                                   \tag{6.1}
\]

The overlap matrix has the exact law

\[
 p(r)=\frac{\prod_a s_a!\prod_b t_b!}
            {n!\prod_{a,b}r_{ab}!},                      \tag{6.2}
\]

Both degree lists are the multiset given by (5.17).
Equivalently, place \(s_a\) stubs at each row slot and
\(t_b\) stubs at each column slot and take a uniform perfect
matching of the two \(n\)-stub sets.  For a fixed ordered column partition,
the $\prod_b t_b!$ orderings of its column stubs form a constant-size fibre;
hence this matching model induces exactly the law (6.2).

Let \(H(r)\) be the simple bipartite graph whose edges are cells
with \(r_{ab}\ge 2\), omitting isolated slot vertices.
Write $H:=H(r)$ and

\[
 W=\sum_{a,b}\binom{r_{ab}}2,\qquad
 \beta(H)=|E(H)|-|V(H)|+c(H).                             \tag{6.3}
\]
We use the standard convention \(c(\varnothing)=0\), so that
\(\beta(\varnothing)=0\).

Figure~\ref{fig:signed-support-mechanism-v3} shows a small exact example of
the thresholding operation and the two factors that it creates.

\begin{figure}[htbp]
\centering
\begin{tikzpicture}[
  x=0.70cm,
  y=0.62cm,
  >=Latex,
  every node/.style={font=\scriptsize},
  entry/.style={minimum width=0.58cm,minimum height=0.45cm,inner sep=0pt},
  cyclecell/.style={fill=blue!10,rounded corners=0.8pt},
  treecell/.style={fill=orange!16,rounded corners=0.8pt},
  vertex/.style={circle,draw=black,fill=white,inner sep=1.8pt},
  cycleedge/.style={draw=blue!60!black,line width=0.9pt},
  treeedge/.style={draw=orange!80!black,line width=0.9pt,dashed}
]
  \begin{scope}
    \foreach \x/\y/\z/\style in {
      0.45/2.55/2/cyclecell,1.35/2.55/2/cyclecell,2.25/2.55/0/,
      0.45/1.65/2/cyclecell,1.35/1.65/3/cyclecell,2.25/1.65/0/,
      0.45/0.75/0/,1.35/0.75/0/,2.25/0.75/2/treecell}{
      \node[entry,\style] at (\x,\y) {$\z$};
    }
    \draw[line width=0.55pt]
      (-0.02,0.38)--(-0.18,0.38)--(-0.18,2.92)--(-0.02,2.92);
    \draw[line width=0.55pt]
      (2.72,0.38)--(2.88,0.38)--(2.88,2.92)--(2.72,2.92);
    \foreach \y/\a in {2.55/1,1.65/2,0.75/3}{
      \node[left] at (-0.35,\y) {$a_{\a}$};
    }
    \foreach \x/\b in {0.45/1,1.35/2,2.25/3}{
      \node at (\x,3.25) {$b_{\b}$};
    }
    \node[font=\footnotesize] at (1.35,3.92) {overlap table $r$};
  \end{scope}

  \draw[->,line width=0.65pt] (3.35,1.65) -- (5.55,1.65)
    node[midway,above,align=center] {$r_{ab}\ge2$};

  \begin{scope}[shift={(6.15,0)}]
    \node[vertex] (a1) at (0,2.35) {$a_1$};
    \node[vertex] (b1) at (2,3.10) {$b_1$};
    \node[vertex] (a2) at (4,2.35) {$a_2$};
    \node[vertex] (b2) at (2,1.60) {$b_2$};
    \node[vertex] (a3) at (1,0.48) {$a_3$};
    \node[vertex] (b3) at (3,0.48) {$b_3$};
    \draw[cycleedge] (a1)--(b1)--(a2)--(b2)--cycle;
    \draw[treeedge] (a3)--(b3);
    \node[font=\footnotesize] at (2,3.92) {support graph $H(r)$};
    \node[align=center] at (2,-0.24)
      {$|E|=5,\ |V|=6,\ c=2,\ \beta=1$};
  \end{scope}

  \draw[->,line width=0.65pt] (10.75,1.65) -- (12.45,1.65);
  \node[align=left,anchor=west,font=\footnotesize] at (12.80,1.70)
    {$\displaystyle W=7,\qquad \text{local reward}=4,$\\[0.45ex]
     $\displaystyle \text{normalized factor}
       =2^{7+2-6}=8=4\cdot2^1.$};
\end{tikzpicture}
\caption{From an overlap table to the signed support factor. The shaded entries
are precisely those with multiplicity at least two. Their support consists of
one four-cycle (solid blue) and one isolated edge (dashed orange), so its binary
cycle space has dimension one. Compatible signs are constant on each connected
component; the colors and line styles mark the two components and carry no sign
information. The displayed arithmetic verifies (6.4) for this example.}
\label{fig:signed-support-mechanism-v3}
\end{figure}

\begin{lemma}[Exact sign sum]
\label{lemma-6.1-exact-sign-sum}

For every overlap matrix $r\in\mathcal R(\mathbf s,\mathbf t)$, the normalized
local factor of the corresponding pair of partitions is

\[
 A_\zeta(r)=2^{W+c(H)-|V(H)|}
 =\left(\prod_{a,b}g(r_{ab})\right)2^{\beta(H)},          \tag{6.4}
\]
where

\[
 g(0)=g(1)=g(2)=1,\qquad
 g(x)=2^{\binom x2-1}\quad(x\ge3).                       \tag{6.5}
\]

Consequently

\[
 \frac{\Exp{(Z_{\mathbf k}^{\mathrm{sgn}})^2}}
      {\Exp{Z_{\mathbf k}^{\mathrm{sgn}}}^2}
 =\sum_{r\in\mathcal R(\mathbf s,\mathbf t)}
      p(r)A_\zeta(r).                                    \tag{6.6}
\]
Here the sum ranges over the matrix set $\mathcal R(\mathbf s,\mathbf t)$
defined above.

\end{lemma}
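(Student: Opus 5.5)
We prove the lemma by expanding $\mathbb E[(Z^{\mathrm{sgn}}_{\mathbf k})^2]$ over ordered pairs of labeled partitions and pairs of sign vectors, and then summing the sign vectors exactly for each fixed pair of partitions. Fix two ordered profile partitions $(V_a)$ and $(W_b)$ with overlap table $r$. Fix also declarations $\sigma\in\{I,K\}^{\mathcal I_{\mathrm{row}}}$ and $\tau\in\{I,K\}^{\mathcal I_{\mathrm{col}}}$. The event that both declared witnesses are realized prescribes every edge inside some $V_a$ and every edge inside some $W_b$. The prescribed pairs inside both $V_a$ and $W_b$ are exactly the $\binom{r_{ab}}2$ pairs inside $V_a\cap W_b$.

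\textbf{Compatibility condition.} The two prescriptions are consistent if and only if $\sigma_a=\tau_b$ for every cell with $r_{ab}\ge2$. For $r_{ab}\le1$ there is no shared pair, so there is no constraint; this is the exactness of the threshold two. When the prescriptions are consistent, the joint event fixes $\sum_a\binom{s_a}2+\sum_b\binom{t_b}2-W$ distinct edge bits. Its probability is therefore $2^{-\sum_a\binom{s_a}2-\sum_b\binom{t_b}2}\,2^{W}$. When they are inconsistent, the probability is zero.

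\textbf{Counting compatible signs.} The number of compatible pairs $(\sigma,\tau)$ is computed componentwise on $H(r)$. Signs must be constant on each connected component of $H(r)$, and they are free on the slots not in $V(H)$. This gives $2^{c(H)}\cdot 2^{|\mathcal I_{\mathrm{row}}|+|\mathcal I_{\mathrm{col}}|-|V(H)|}=2^{2k-|V(H)|+c(H)}$ compatible pairs, where $k=k_{\mathrm{co}}$.

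\textbf{Normalization.} We divide by the square of the first moment. In labeled-slot form that first moment is $2^{k}$ times the number of ordered partitions times $2^{-\sum\binom{u}{2}}$. The $2^{2k}$ cancels, and the normalized contribution of the fixed pair of partitions becomes $2^{W+c(H)-|V(H)|}$ times the ratio of counts. Summing over pairs of ordered partitions, that ratio of counts turns into the expectation over two independent uniform ordered partitions. By (6.2), grouping pairs by their overlap table gives weight $p(r)$. This proves (6.6), with $A_\zeta(r)=2^{W+c(H)-|V(H)|}$. As noted after the slot labeling, the labeling multiplies both moments consistently and so does not affect the ratio.

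\textbf{The product form.} To rewrite (6.4), substitute $\beta(H)=|E(H)|-|V(H)|+c(H)$:
\[
2^{W+c(H)-|V(H)|}=2^{\beta(H)}\,2^{W-|E(H)|}=2^{\beta(H)}\prod_{a,b}2^{\binom{r_{ab}}2-\mathbf 1[r_{ab}\ge2]}.
\]
The factor $2^{\binom{x}{2}-\mathbf 1[x\ge 2]}$ equals $1$ for $x=0,1,2$ and equals $2^{\binom x2-1}$ for $x\ge3$, which is exactly $g$ in (6.5). The cases $c(\varnothing)=0$ and $\beta(\varnothing)=0$ are consistent with this, since $H=\varnothing$ gives $A_\zeta=1$.

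\textbf{Expected obstacle.} The only delicate point is the bookkeeping in the normalization. One must check that labeled slots, ordered partitions and the stub-matching law (6.2) all induce the same weights. Since everything is an exact finite count, this is routine.
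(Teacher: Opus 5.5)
Your proposal is correct and follows the paper's proof essentially step by step: sign agreement is forced exactly on cells with $r_{ab}\ge2$, giving $2^{2k_{\mathrm{co}}-|V(H)|+c(H)}$ compatible sign pairs; each compatible pair has joint probability $2^{-(2B-W)}$; you normalize by $(2^{k_{\mathrm{co}}}2^{-B})^2$; and peeling one factor of $2$ off each support edge yields $\prod g(r_{ab})\,2^{\beta(H)}$. Your remarks on the labeled-slot normalization only spell out the one-line averaging over (6.2) that the paper states directly.
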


\begin{proof}
Let
\(B=\sum_a \binom{s_a}{2}\),
which is the number of internal edge bits prescribed by one partition. A
row sign and a column sign are compatible on a cell of size at least two
exactly when they agree. Therefore signs must be constant on every
component of \(H\). \Needspace{7\baselineskip}The number of compatible pairs
of sign assignments, including the free signs on slots isolated from $H$, is

\[
 2^{2k_{\mathrm{co}}-|V(H)|+c(H)}.
\]

For each compatible pair, \(W\) edge bits are prescribed twice, so the
probability of all constraints is
\(2^{-(2B-W)}\). Division by the square of the one-partition
signed probability
\((2^{k_{\mathrm{co}}}2^{-B})^2\)
gives

\[
 \frac{2^{2k_{\mathrm{co}}-|V(H)|+c(H)}2^{-(2B-W)}}
      {(2^{k_{\mathrm{co}}}2^{-B})^2}
 =2^{W+c(H)-|V(H)|},
\]

which is the first expression in (6.4).

Formula (6.4) separates two factors that will be estimated by different
methods. The product of the \(g(r_{ab})\) is purely local, with
$g(0)=g(1)=g(2)=1$; the factor \(2^{\beta(H)}\) is topological and counts the
independent cycle choices of the support graph.

For every support edge, split \(\binom{r}{2}\) as
\((\binom{r}{2}-1)+1\). Since
\(|E|-|V|+c=\beta\),

\[
 \prod_{a,b}g(r_{ab})=2^{W-|E(H)|},
\]

and multiplying by \(2^{\beta(H)}\) gives
the second expression. Averaging over (6.2) proves (6.6).
\end{proof}

The identity

\[
 2^{\beta(H)}
 =\#\{F\subseteq E(H):\deg_F(v)\text{ is even for every }v\} \tag{6.7}
\]

will be used repeatedly. It is the elementary fact that the even
subgraphs form the binary cycle space of dimension \(\beta(H)\).

We also need one exact configuration-model estimate.

\begin{lemma}[Joint prescribed-cell bound]
\label{lemma-6.2-joint-prescribed-cell-bound}

Let $\mathcal I$ and $\mathcal J$ be finite row- and column-index sets, and
let $(d_a)_{a\in\mathcal I}$ and $(d'_b)_{b\in\mathcal J}$ be nonnegative
integer degree lists satisfying
\[
  \sum_{a\in\mathcal I}d_a=
  \sum_{b\in\mathcal J}d'_b=m_0.
\]
Choose a uniform perfect matching of the two $m_0$-stub sets, write
$\mathbb P_{\mathrm{match}}$ for its law, and let
$r_{ab}$ be the number of matched pairs in cell $(a,b)$. For a finite set
$D\subseteq\mathcal I\times\mathcal J$, prescribe integer demands
$x_{ab}\ge1$ on $D$. Extend $x_{ab}=0$ off $D$, and put

\[
 x=\sum_Dx_{ab},\quad D_a=\sum_bx_{ab},\quad
 D'_b=\sum_ax_{ab}.
\]

In the feasible case

\[
 x\le m_0,\qquad D_a\le d_a\ \ (\forall a),\qquad
 D'_b\le d'_b\ \ (\forall b),
\]

we have

\[
 \mathbb P_{\mathrm{match}}(r_{ab}\ge x_{ab}\ \text{for every }(a,b)\in D)
 \le\frac{\prod_a(d_a)_{D_a}\prod_b(d'_b)_{D'_b}}
          {(m_0)_x\prod_{(a,b)\in D}x_{ab}!}.             \tag{6.8}
\]

If any feasibility condition fails, the prescribed event is empty and is
handled separately. In particular, in all cases with \(m_0>0\), with
\(\mathrm{e}:=\exp(1)\) and
\(\theta_{ab}=\mathrm{e}\, d_a d'_b/m_0\),
its probability is at most

\[
 \prod_{(a,b)\in D}\frac{\theta_{ab}^{x_{ab}}}{x_{ab}!}.  \tag{6.9}
\]

\end{lemma}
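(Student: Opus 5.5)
We will prove (6.8) by a union bound over specific stub choices inside the uniform matching model, and then derive (6.9) from it by elementary falling-factorial estimates.

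\textbf{Step 1: reduction to specified sets of matched pairs.} The event $\{r_{ab}\ge x_{ab}\ \forall (a,b)\in D\}$ holds exactly when the following choices exist. For each $(a,b)\in D$ we pick $x_{ab}$ row stubs at $a$ and $x_{ab}$ column stubs at $b$. All chosen stubs must be distinct, and the two chosen sets must be joined by a bijection realized by the matching. So we may fix, for each row slot $a$, an ordered sequence of $D_a$ distinct stubs, and split it into consecutive blocks of lengths $x_{ab}$. We do the same at each column slot $b$, with $D'_b$ stubs in blocks of lengths $x_{ab}$. Inside each cell we pair the $i$-th stub of the row block with the $i$-th stub of the column block. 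Every realization of the event is obtained in this way, so by the union bound the probability is at most the number of such data times the probability of one fixed set of $x$ prescribed disjoint pairs.

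\textbf{Step 2: counting.} A uniform perfect matching contains $x$ prescribed disjoint pairs with probability exactly $(m_0-x)!/m_0!=1/(m_0)_x$. The ordered stub choices number $\prod_a (d_a)_{D_a}\prod_b (d'_b)_{D'_b}$. The overcount is the only point needing care. A fixed set of matched pairs in cell $(a,b)$ is produced by the $x_{ab}!$ simultaneous reorderings of the row block and the column block that preserve the pairing. Hence we may divide by $\prod_D x_{ab}!$, and doing so still leaves an upper bound. More precisely, each unordered choice of $x_{ab}$ matched pairs per cell is counted at least $\prod x_{ab}!$ times, because the order of blocks within a slot is fixed by a deterministic ordering of $D$. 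This gives (6.8) in the feasible case. If a feasibility condition fails, the event is empty, since demands cannot exceed the available stubs.

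\textbf{Step 3: the Poisson-type bound (6.9).} In the infeasible case the probability is $0$ and there is nothing to prove. In the feasible case we use $(d_a)_{D_a}\le d_a^{D_a}$ and $(d'_b)_{D'_b}\le (d'_b)^{D'_b}$. Then $\prod_a d_a^{D_a}\prod_b (d'_b)^{D'_b}=\prod_D (d_ad'_b)^{x_{ab}}$. It remains to show $1/(m_0)_x\le (\mathrm e/m_0)^x$, which is $m_0^x\le \mathrm e^x (m_0)_x$. Since $x\le m_0$, we have $(m_0)_x\ge (m_0)_x\,(m_0-x)!/m_0^{m_0-x}=m_0!/m_0^{m_0-x}$. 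Combined with $m_0!\ge (m_0/\mathrm e)^{m_0}$, this gives $(m_0)_x\ge m_0^{x}\mathrm e^{-m_0}$, which is too weak. Instead we argue directly: $\log\bigl(m_0^x/(m_0)_x\bigr)=-\sum_{j<x}\log(1-j/m_0)$. Since $x\le m_0$, this sum is at most $-m_0\int_0^{x/m_0}\log(1-u)\,du\le m_0\cdot (x/m_0)=x$, because the integral of $-\log(1-u)$ over $[0,y]$ with $y\le1$ is at most $y$. Careful monotonicity gives the Riemann comparison, since $-\log(1-u)$ is increasing. This yields $\prod_D \theta_{ab}^{x_{ab}}/x_{ab}!$.

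The main obstacle is the last inequality $m_0^x\le \mathrm e^x(m_0)_x$ at the endpoint $x=m_0$. There the left Riemann sum comparison gives exactly $\log(m_0^{m_0}/m_0!)\le m_0$, which is the standard bound $m_0!\ge (m_0/\mathrm e)^{m_0}$, so the endpoint is still covered.
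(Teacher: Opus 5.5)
Your proof is correct and follows the paper's argument. You take a union bound over demand witnesses, counting them as $\prod_a(d_a)_{D_a}\prod_b(d'_b)_{D'_b}/\prod_{D}x_{ab}!$; you obtain this count via ordered blocks with an overcount of exactly $\prod x_{ab}!$, where the paper uses unordered stub sets and $x_{ab}!$ bijections. Each witness has probability $1/(m_0)_x$.

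The only divergence is in (6.9). You derive $(m_0)_x\ge(m_0/\mathrm e)^x$ by comparing $-\sum_{j<x}\log(1-j/m_0)$ with $m_0\int_0^{x/m_0}-\log(1-u)\,du\le x$. The paper instead writes $(m_0)_x=x!\binom{m_0}{x}$ and uses $\binom{m_0}{x}\ge(m_0/x)^x$ together with $x!\ge(x/\mathrm e)^x$. Both routes are valid, including at $x=m_0$.
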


\begin{proof}
If a feasibility condition fails, no matching realizes all demands. In the
feasible case, assigning disjoint row stubs to the demanded cells gives
\[
  \prod_a\frac{(d_a)_{D_a}}{\prod_b x_{ab}!}
\]
choices. Independently selecting the column stubs gives
\[
  \prod_b\frac{(d'_b)_{D'_b}}{\prod_a x_{ab}!}
\]
choices. For each cell \(ab\), the selected row and column stubs can be paired
in \(x_{ab}!\) ways. Hence the product over all cells cancels one of the two
copies of \(\prod_{(a,b)\in D}x_{ab}!\), and the total number of demand witnesses
is

\[
 \frac{\prod_a(d_a)_{D_a}\prod_b(d'_b)_{D'_b}}
      {\prod_{(a,b)\in D}x_{ab}!}.
\]

Any fixed witness occurs with probability \(1/(m_0)_x\), because after
its \(x\) prescribed pairs are exposed the remaining matching is uniform
on the unused stubs.  A union bound proves (6.8).

It remains to simplify the global denominator.  For \(1\le x\le m_0\),

\[
 \binom{m_0}{x}
 =\prod_{j=0}^{x-1}\frac{m_0-j}{x-j}
 \ge\left(\frac{m_0}{x}\right)^x,
 \qquad
 x!\ge\left(\frac{x}{\mathrm{e}}\right)^x.
\]

The factorial inequality follows, for example, from
\(\sum_{j=1}^x\log j\ge\int_1^x\log t\,dt=x\log x-x+1\).
Consequently

\[
 (m_0)_x=x!\binom{m_0}{x}\ge(m_0/\mathrm{e})^x,          \tag{6.10}
\]

while \((m_0)_0=1=(m_0/\mathrm e)^0\) covers \(x=0\). Combining (6.10) with
\((d)_r\le d^r\) proves (6.9). Equation~(6.8) retains the single global falling
factorial before the coarser bound (6.9) is taken.
\end{proof}

\section{Exact partial diagonals}\label{exact-partial-diagonals}

The overlap representation of Section~6 allows the two partitions to share
whole classes. We assign each marked common subprofile a nonnegative weight
$D(\ell)$ and prove that their total mass is $1+o(1)$, uniformly through the
phase. Section~8 uses this estimate in the endpoint-table comparison. The
counting identities in (7.1)--(7.6) are exact for finite $n$; asymptotic
estimates are explicitly labeled below.

A \emph{partial diagonal} is a specified collection of whole classes that
the two partitions have in common.  It is marked: one overlap may contain
several such collections and then contributes to every corresponding
nonnegative marked term.  The resulting overcount makes the subsequent
summation an upper bound.

For a selected common subprofile \(\ell=(\ell_i)\), with
$i\in\{2,3,4,5\}$, put

\[
 \ell_\bullet=\sum_i\ell_i,\qquad
 m=\sum_i u_i\ell_i,\qquad
 b_\ell=\sum_i\binom{u_i}{2}\ell_i.                           \tag{7.1}
\]

Here \(0\le\ell_i\le k_i\) coordinatewise.  Since the full profile has
\(\sum_i u_i k_i=n\), this automatically gives \(m\le n\); thus every
factorial in the counting formula below has its ordinary combinatorial
meaning.

\paragraph{Exact marked decomposition.}
Summed over all choices of an $m$-vertex support in $[n]$, the signed first
moment of these partial partitions is

\[
 Y_{\ell}^{\mathrm{sgn}}
 =2^{\ell_\bullet}\frac{n!}{(n-m)!\prod_i(u_i!)^{\ell_i}\ell_i!}
       2^{-b_\ell}.                                           \tag{7.2}
\]

For each type $i$, the chosen row and column slots can be matched in
$\binom{k_i}{\ell_i}^2\ell_i!$ ways.  For a fixed slot matching, labeling the
common blocks contributes the corresponding factor $\ell_i!$ to
$Y_\ell^{\mathrm{sgn}}$.  These factors cancel type by type.  In the
normalized second moment a common block is then counted once, which gives the
exact \emph{marked} common-subprofile weight

\[
 D(\ell)=\frac{\prod_i\binom{k_i}{\ell_i}^2}
                  {Y_\ell^{\mathrm{sgn}}}.                         \tag{7.3}
\]

The quantity $D(\ell)$ records the exposed incidence and local signed reward
of the marked whole blocks.  The unexposed cell rewards and the residual
cycle-space factor enter later, in Sections~8--9.

Different marked choices need not be disjoint: an overlap containing several
common blocks contributes to each corresponding marked choice. The sums below
are sums of these nonnegative marked weights, so no disjointness is used.
In particular,
\(D(0)=1\) and \(D(\mathbf k)=1/\Exp{Z_{\mathbf k}^{\mathrm{sgn}}}\).
Whenever $\ell_i<k_i$, one may add a common block of type $i$.  Let $e_i$
denote the $i$th coordinate vector.  Since
$n-m=\sum_j u_j(k_j-\ell_j)\ge u_i$, the denominator below is nonzero.
\Needspace{10\baselineskip}
Moreover,

\[
 \frac{\binom{k_i}{\ell_i+1}^2}{\binom{k_i}{\ell_i}^2}
 =\frac{(k_i-\ell_i)^2}{(\ell_i+1)^2},
\]

while (7.2) gives

\[
 \frac{Y_{\ell+e_i}^{\mathrm{sgn}}}
      {Y_\ell^{\mathrm{sgn}}}
 =\frac{2(n-m)_{u_i}}
        {u_i!(\ell_i+1)2^{\binom{u_i}{2}}}
 =\frac{2\mu_{u_i}(n-m)}{\ell_i+1}.
\]

Dividing these two ratios gives

\[
 \frac{D(\ell+e_i)}{D(\ell)}
 =\frac{(k_i-\ell_i)^2}
        {2(\ell_i+1)\mu_{u_i}(n-m)}.                     \tag{7.4}
\]

At the opposite endpoint, put $h=\mathbf k-\ell$,
$h_\bullet=\sum_i h_i$, $v=\sum_i u_i h_i$, and
$b_h=\sum_i\binom{u_i}{2}h_i$. Define the finite first-moment factor
\[
 M_h(v):=
 2^{h_\bullet}\frac{v!}{\prod_i(u_i!)^{h_i}h_i!}\,2^{-b_h}.
\]
Exact factorial cancellation gives

\[
 D(\mathbf k-h)=\frac{B(h)}{\Exp{Z_{\mathbf k}^{\mathrm{sgn}}}},\qquad
 B(h)=\left(\prod_i\binom{k_i}{h_i}\right)M_h(v), \tag{7.5}
\]

and, whenever \(h_i<k_i\),

\[
 \frac{\binom{k_i}{h_i+1}}{\binom{k_i}{h_i}}
 =\frac{k_i-h_i}{h_i+1},\qquad
 \frac{M_{h+e_i}(v+u_i)}
      {M_h(v)}
 =\frac{2\mu_{u_i}(v+u_i)}{h_i+1}.
\]

Therefore

\[
\frac{B(h+e_i)}{B(h)}
 =\frac{2(k_i-h_i)\mu_{u_i}(v+u_i)}{(h_i+1)^2}.          \tag{7.6}
\]

Equations (7.2)--(7.6) are exact finite-\(n\) counting identities.  We now
use them only through uniform asymptotic bounds on the three ranges of
selected mass.

In Heckel's admissible-phase argument, the corresponding control of partial
profiles enters through the tame-coloring estimates of
\citet[Lemmas~5.1--5.3, 6.3--6.5, and~7.20]{heckel-panagiotou-2023},
as used in \citet[Proposition~5]{heckel-2025-difference}; their hypotheses are the
source of the lower \(\mu_\alpha\)-window in that theorem.  The next lemma
supplies the uniform four-size estimate needed here, including both corners
and every intermediate mass.

\Needspace{8\baselineskip}
\begin{lemma}[All common subprofiles]
\label{lemma-7.1-all-common-subprofiles}

For the exact signed witness profile, uniformly over the complete phase,

\[
 \sum_{0\le\ell_i\le k_i}D(\ell)=1+o(1),               \tag{7.7}
\]

where the error is bounded by one deterministic sequence tending to zero.

\end{lemma}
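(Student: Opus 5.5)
Split the sum in (7.7) according to the selected mass $\ell_\bullet$ (equivalently $m$) into three ranges: an empty range $\ell_\bullet\le \eta k_{\mathrm{co}}$, a central range $\eta k_{\mathrm{co}}<\ell_\bullet< k_{\mathrm{co}}-\eta' k_{\mathrm{co}}$ (with the upper cut chosen below), and a full range near $\mathbf k$. Here $\eta>0$ is a small constant. The term $D(0)=1$ supplies the main contribution; every other contribution must be $o(1)$ uniformly in the phase.

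\emph{Empty range (forward recurrence).} Use (7.4). Since $u_i=\alpha-i$ with $i\ge2$, (2.8a) and (2.2) give $\mu_{u_i}(n)\ge\mu_{\alpha-2}\ge c\,n^2(\log n)^{2/q-5/2}$. For $m\le \eta n$ we have $\mu_{u_i}(n-m)\ge(1-\eta)^{u_i}\mu_{u_i}(n)=\mu_{u_i}(n)\,n^{-O(\eta)}$. Because $k_i=\Theta(n/\log n)$ by (5.17), each ratio in (7.4) is then at most
\[
 \frac{k_i^2}{2(\ell_i+1)\mu_{u_i}(n-m)}\le\frac{n^{-c'}}{\ell_i+1}.
\]
Summing over all $\ell$ with $\ell_\bullet\le\eta k_{\mathrm{co}}$ is therefore dominated by $\prod_i\exp(n^{-c'})=1+o(1)$. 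The bound is uniform in $\delta$, since only the lower bound (2.4) enters.

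\emph{Full range (reverse recurrence).} Write $\ell=\mathbf k-h$ and use (7.5)--(7.6). With $v=\sum u_ih_i$ small, $\mu_{u_i}(v+u_i)$ is tiny: for $v\le \eta n$ one has $\mu_{u}(v)\le (v/n)^{u}\mu_u(n)$, and $\mu_u(n)\le n^{O(1)}$ for $u\ge\alpha-5$. Hence the ratio (7.6) is at most $k_i\,n^{O(1)}(v/n)^{u_i}/(h_i+1)^2$. This is $\ll1$ once $v\le n^{1-\epsilon}$, and still small for $v\le\eta n$ because the exponent $u_i\asymp\log n$ beats polynomial factors. Then $\sum_h B(h)\le B(0)\,(1+o(1))$, where $B(0)=1$, and so the full-range contribution is $O(1/\mathbb E Z)=\exp(-c_Z(\log n)^2)$ by (5.19).

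\emph{Central range (rate function) — the main obstacle.} Here $\ell_\bullet=xk_{\mathrm{co}}$ with $\eta\le x\le1-\eta$. Write $\log D(\ell)$ via Stirling as $k_{\mathrm{co}}\,\Phi(\ell/\mathbf k)+O((\log n)^2)$. Using (7.2) and the first-moment structure (5.19), I expect this to take the form
\[
 \log D(\ell)=\log\frac{\prod_i\binom{k_i}{\ell_i}\,\mathbb E Z_{\mathbf k-\ell}(n-m)}{\mathbb E Z_{\mathbf k}},
\]
up to lower-order factors. The plan is to show a uniform negative rate: at order $n$, the entropy $\sum_i 2k_i H(\ell_i/k_i)=O(n/\log n)$ is negligible against the loss $m\log(n/(n-m))\cdot(\text{const})$ coming from $(n-m)$ versus $n$ in $\mu$. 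Concretely, the ratios (7.4) satisfy $\log\frac{D(\ell+e_i)}{D(\ell)}\le -2\log n+O(\log\log n)+u_i\log\frac{n}{n-m}$. Integrating along a monotone path from $0$ to $\ell$, this stays $\le -c\log n$ per step as long as $u_i\log\frac{n}{n-m}\le(2-c)\log n$, i.e.\ $n-m\ge n^{1-c''}$. That condition covers everything except $v=n-m\le n^{1-c''}$, which the reverse recurrence handles. So I would pick the two cuts to overlap, using the forward-ratio bound up to $v=n^{1-c''}$ and the reverse one below it. The delicate point is uniformity in $\delta$ near the junction where $\mu_{u_i}(n-m)\approx1$; there I would use the exact ratio (2.8) with $s=u_i$ to keep all constants phase-free. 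The count of profiles, $(n+1)^4$, is absorbed by the $\exp(-c\,\ell_\bullet\log n)$ decay.
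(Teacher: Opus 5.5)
Your full-range step is fine: it is the paper's reverse recurrence, and a small constant cut such as $v\le n/32$ works. The gap is in the empty and central ranges, and it is exactly the phase-uniformity issue the lemma exists to handle.

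\emph{Empty range.} The bound $k_i^2/\mu_{u_i}(n)\le n^{-c'}$ is false uniformly in the phase. From (2.4) and $k_i=\Theta(n/\log n)$ you only get $k_2^2/(2\mu_{\alpha-2}(n))=O\bigl((\log n)^{-(2/\log 2-1/2)}\bigr)$, and this is sharp along $\delta_n\to0$; it is the paper's (7.9). With a constant $\eta$, the loss $(1-\eta)^{-u_i}=n^{\Theta(\eta)}$ swamps this polylogarithmic margin, so your exponential-series bound blows up. The empty corner must therefore be cut at $\eta=\Theta(\log\log n/\log n)$, where the loss is only $(\log n)^{3/8}$. The paper takes $\eta=\log\log n/(32\log n)$.

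\emph{Central range.} Because of this, the central range must start at vanishing $Y\approx\eta$, and your step-by-step argument cannot cover it. Your per-step bound $\log\frac{D(\ell+e_i)}{D(\ell)}\le-2\log n+O(\log\log n)+u_i\log\frac{n}{n-m}$ is wrong. For $i=2$ and $\delta_n\to0$, the terms $2\log k_2$ and $\log\mu_{u_2}(n)$ cancel up to $O(\log\log n)$, so the only negative contribution is $-\log(\ell_2+1)\ge-\log n$.

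Separately, since $u_i\sim(2/\log 2)\log n$, we have $u_i\log\frac{n}{n-m}=(2/\log2+o(1))\log(1/\rho)\log n$. Your condition therefore forces $\rho$ to stay above a constant; it is not equivalent to $n-m\ge n^{1-c''}$. In fact, when $\ell_2$ is a fixed fraction of $k_2$ and $\delta_n\to0$, the forward ratio (7.4) for a type-$2$ block is $n^{(2/\log 2)\log(1/\rho)-1+o(1)}$. This exceeds one as soon as $\rho<2^{-1/2}$. Hence $D$ is not monotone along exposure paths, and no path argument bridges the constant-$\rho$ window between your two recurrences. Your heuristic also has the sign of the $m\log(n/(n-m))$ term reversed: shrinking the ambient set lowers $\mu_{u_i}$ and so increases $D$.

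\emph{Missing ingredient.} What you need is a global order-$n$ rate function rather than a product of step ratios. Applying Stirling to (7.2)--(7.3), keeping every $u_i!$ exact, gives
$\log D(\ell)\le k_{\mathrm{co}}\alpha\,\Phi_T(z)+O\bigl(k_{\mathrm{co}}Y\log(\mathrm e/Y)+\log n\bigr)$, where $\Phi_T(z)=R\log R+\frac{\log 2}{2}(I_r-TR)$.

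You then prove $\Phi_T\le -Y/5000$ whenever $\rho\ge 1/32$, uniformly in the phase. This uses three facts: $I_r-TR\le 3RY$, which comes from the four consecutive deficits; $\log 2<7/10$; and $T<4$.

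Since $Y\ge\eta/2$ and $\log(1/\eta)\ll\alpha$, the Stirling errors are dominated. The result is $D(\ell)\le\exp(-c\,k_{\mathrm{co}}\log\log n)$, which beats the $(k_{\mathrm{co}}+1)^4$ count of profiles.
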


\begin{proof}
Set
\[
  \eta:=\frac{\log\log n}{32\log n}.
\]
For all sufficiently large $n$, when $\eta<31/32$, split the sum into the
disjoint ranges $m\le\eta n$; $m>\eta n$ with $n-m>n/32$; and
$n-m\le n/32$. They are controlled respectively by the forward recurrence
(7.4), the rate estimate below, and the reverse recurrence (7.6).

\displayheading{Empty corner}

Let
\[
  \xi_i=\frac{k_i^2}{2\mu_{u_i}(n)},
  \qquad
  \Xi_{\mathrm{empty}}=\sum_i\xi_i.
  \tag{7.8}
\]
Equations (2.4), (2.8), and
$k_i=\Theta(n/\log n)$ imply, with one phase-independent constant,
\begin{equation*}
  \Xi_{\mathrm{empty}}
  =O\!\left((\log n)^{-(2/\log 2-1/2)}\right).
  \tag{7.9}
\end{equation*}
For $m\le\eta n$, the ambient-size activities control every intermediate
denominator in the selected-mass range as follows.

Fix a final subprofile $\ell$ with selected mass $m\le\eta n$, and expose its
blocks in any order. At an intermediate selected mass $m'\le m$, the exact
ratio of first moments is
\[
  \frac{\mu_{u_i}(n)}{\mu_{u_i}(n-m')}
  =\frac{(n)_{u_i}}{(n-m')_{u_i}}
  =\prod_{r=0}^{u_i-1}
    \frac{n-r}{n-m'-r}.
\]
Uniformly in the phase, $u_i=O(\log n)$ whereas
$\eta n\gg\log n$. Hence, for all sufficiently large $n$,
$m'+r\le2\eta n$ throughout this product, and therefore
\[
  \frac{\mu_{u_i}(n)}{\mu_{u_i}(n-m')}
  \le(1-2\eta)^{-u_i}.
\]
The phase expansion and $\log 2>2/3$ give $u_i\le3\log n$ eventually. Since
$\eta\le1/4$ and $-\log(1-2\eta)\le4\eta$,
\begin{equation*}
  (1-2\eta)^{-u_i}
  \le\exp(4\eta u_i)
  \le\exp\!\left(\frac38\log\log n\right)
  =(\log n)^{3/8}.
  \tag{7.10}
\end{equation*}
Every threshold in this estimate is independent of the phase.

Suppose the current partial profile is $a$ and the next selected block has type
$i$. The exact recurrence (7.4), the inequality
$(k_i-a_i)^2\le k_i^2$, and (7.10) give
\[
  \frac{D(a+e_i)}{D(a)}
  \le
  \frac{(\log n)^{3/8}\xi_i}{a_i+1}.
\]
Multiplication along any exposure order gives the order-independent bound
\[
  D(\ell)
  \le
  \prod_i
  \frac{\bigl((\log n)^{3/8}\xi_i\bigr)^{\ell_i}}
       {\ell_i!}.
\]
All summands are nonnegative, so enlarging from the selected-mass range to the
whole nonnegative four-dimensional lattice and expanding the exponential
series yields
\begin{equation*}
  1
  \le
  \sum_{m\le\eta n}D(\ell)
  \le
  \exp\!\left((\log n)^{3/8}\Xi_{\mathrm{empty}}\right).
  \tag{7.11}
\end{equation*}
The lower bound is the empty marked subprofile.

The exponent tends to zero with a fixed power margin.  By (7.9),
\[
  (\log n)^{3/8}\Xi_{\mathrm{empty}}
  =O\!\left((\log n)^{-(2/q-7/8)}\right),
\]
and $q<7/10$ gives
\[
  \frac2q-\frac78
  >\frac{20}{7}-\frac78
  =\frac{111}{56}>0.
\]
Thus, for one deterministic sequence
$\varepsilon_n^{\mathrm{empty}}\to0$ independent of the phase,
\[
  1
  \le
  \sum_{m\le\eta n}D(\ell)
  \le e^{\varepsilon_n^{\mathrm{empty}}}
  =1+o(1).
\]

\displayheading{Central range}

Write
\[\begin{gathered}
p_i=k_i/k_{\mathrm{co}},\quad y_i=\ell_i/k_{\mathrm{co}},\quad
 Y=\sum_i y_i,\quad I=\sum_i i y_i,\\
z_i=p_i-y_i,\quad R=\sum_i z_i=1-Y,\quad
 T:=\sum_i i p_i=\alpha-\frac{n}{k_{\mathrm{co}}},\quad
 I_r=\sum_i i z_i=T-I.
\end{gathered}
\tag{7.12}\]
The residual vertex fraction is
\begin{equation*}
 \rho=\frac{n-m}{n}
 =R+\frac{I-TY}{\alpha-T}.
 \tag{7.13}
\end{equation*}

\displayheading{Uniform Stirling extraction}
Before approximating any factorial, equations (7.2) and (7.3) give the exact
identity
\[
\begin{split}
\log D(\ell)={}&
2\sum_i\{\log(k_i!)-\log(\ell_i!)-\log((k_i-\ell_i)!)\}\\
&-\ell_\bullet\log 2-\log(n!)+\log((n-m)!)
 +\sum_i\ell_i\log(u_i!)+\sum_i\log(\ell_i!)
 +(\log 2)b_\ell.
\end{split}
\]
We use the uniform form of Stirling's estimate: for an absolute constant
$C_{\mathrm S}>0$,
\[
  \log(t!)=t\log t-t+\sigma(t),
  \qquad
  |\sigma(t)|\le C_{\mathrm S}\log(t+2)
  \qquad(t\in\mathbb N_0),
\]
with the convention $0\log0=0$. Apply it only to the ambient and
profile-multiplicity factorials \(n!\), \((n-m)!\), \(k_i!\), \(\ell_i!\), and
\((k_i-\ell_i)!\); the coefficient-weighted class-size factorials \(u_i!\)
remain exact. Only a fixed number of factorials are approximated, and every
argument is at most $n$. Therefore the total remainder is $O(\!\log n)$
uniformly, including when some $\ell_i$ or $k_i-\ell_i$ is zero.

Set $K=k_{\mathrm{co}}$, so $k_i=Kp_i$, $\ell_i=Ky_i$, and
$k_i-\ell_i=Kz_i$. The profile-multiplicity factorials then contribute
\[
\begin{split}
 &2\sum_i\log(k_i!)-\sum_i\log(\ell_i!)
      -2\sum_i\log((k_i-\ell_i)!)\\
 &\quad={}
 K\!\left[
  2\sum_i p_i\log p_i-\sum_i y_i\log y_i
  -2\sum_i z_i\log z_i-Y+Y\log K
 \right]+O(\!\log n).
\end{split}
\]
Since $m=K(\alpha Y-I)=K\sum_i u_i y_i$, $n=K(\alpha-T)$, and
$n-m=n\rho$, the two ambient factorials contribute
\[
 -\log(n!)+\log((n-m)!)
 =n\rho\log\rho
   +K\sum_i y_i\bigl(u_i-u_i\log n\bigr)+O(\!\log n).
\]
Finally, $\ell_\bullet=K\sum_i y_i$ and
$b_\ell=K\sum_i y_i\binom{u_i}{2}$, so the class-size and sign terms are
exactly
\[
 K\sum_i y_i\left[
   \log(u_i!)+(\log 2)\binom{u_i}{2}-\log 2
 \right].
\]
Adding these three displayed contributions and distributing
$Y\log K=\sum_i y_i\log K$ gives
\begin{equation*}
\begin{split}
 \log D(\ell)={}&n\rho\log\rho\\
 &+k_{\mathrm{co}}\sum_i\{2p_i\log p_i
     -2(p_i-y_i)\log(p_i-y_i)
     -y_i\log y_i-y_i+y_iE_i\}
     +O(\!\log n),
\end{split}
 \tag{7.14}
\end{equation*}
where
\begin{equation*}
 E_i=\log k_{\mathrm{co}}+\log(u_i!)+u_i-u_i\log n
       +(\log 2)\binom{u_i}{2}-\log 2.
 \tag{7.15}
\end{equation*}

\Needspace{9\baselineskip}
The entropy term in (7.14) is uniform even at the boundary of the coordinate
box. By (5.15) and the bounded tangent-rounding displacement, the exact integer
profile satisfies $p_i\ge c_p>0$ after decreasing $c_p$, if necessary. Hence
there is an absolute $C_{\mathrm H}$ such that, for $c_p\le p\le1$ and
$0\le y\le p$,
\begin{equation*}
  2p\log p-2(p-y)\log(p-y)-y\log y-y
  \le C_{\mathrm H}y\log\!\left(\frac{\mathrm e}{y}\right).
  \tag{7.15a}
\end{equation*}
Here and below, \(y\log(\mathrm e/y)\) takes its continuous value \(0\) at
\(y=0\); the analogous convention applies to \(Y\log(4\mathrm e/Y)\).
Indeed, when $y\le p/2$, the mean-value theorem bounds
$p\log p-(p-y)\log(p-y)$ by $C_{\mathrm H}y$, while $-y\log y$ supplies the only
singular term. When $y\ge p/2$, one has $y\ge c_p/2$, and the assertion follows
by compactness. Finally,
\[
  \sum_i y_i\log\!\left(\frac{\mathrm e}{y_i}\right)
  \le
  Y\log\!\left(\frac{4\mathrm e}{Y}\right),
\]
by the entropy bound on four coordinates. The resulting sum is
$O(Y\log(\mathrm e/Y))$ uniformly.

Since $u_{i+1}=u_i-1$, exact subtraction in (7.15) gives
\begin{equation*}
\begin{aligned}
 E_{i+1}-E_i
 &=\log n-\log(\alpha-i)-(\log 2)(\alpha-i)+\log 2-1\\
 &=-\frac{\log 2}{2}\alpha+O(1),
\end{aligned}
 \tag{7.16}
\end{equation*}
uniformly for $2\le i\le4$. For the second line, use
$(\log 2)\alpha/2=\log n-\log\log n+O(1)$ and
$\log(\alpha-i)=\log\log n+O(1)$.

Let $\bar E=\sum_i p_iE_i$. Applying Stirling once to the complete signed
first moment gives
\begin{equation*}
 -\frac1{k_{\mathrm{co}}}
   \log \Exp{Z_{\mathbf k}^{\mathrm{sgn}}}
 =\sum_i p_i\log p_i-1+\bar E+o(1).
 \tag{7.17}
\end{equation*}
The phase-uniform growth estimate (5.19) is eventually positive.  Solving
(7.17) for $\bar E$ and using
$-\sum_i p_i\log p_i\le\log4$ therefore gives
\[
  \bar E
  =1-\sum_i p_i\log p_i
    -\frac1{k_{\mathrm{co}}}\log\mathbb E Z_{\mathbf k}^{\mathrm{sgn}}
    +o(1)
  \le1+\log4+o(1).
\]
In particular, $\bar E\le C_E$ for one phase-independent constant. To expose
the affine structure in (7.16), set
\[
  F_i:=E_i+\frac{(\log 2)\alpha}{2}i.
\]
Then $F_{i+1}-F_i=O(1)$. Because the support has only four consecutive
indices,
\[
  F_i=\sum_jp_jF_j+O(1)
     =\bar E+\frac{(\log 2)\alpha}{2}T+O(1).
\]
Consequently
\[
  E_i=\bar E-\frac{(\log 2)\alpha}{2}(i-T)+O(1),
\]
and therefore
\begin{equation*}
 \sum_i y_iE_i
 \le\frac{(\log 2)\alpha}{2}(TY-I)+O(Y).
 \tag{7.18}
\end{equation*}
Only the upper bound on $\bar E$ is used here.

It remains to compare the vertex fraction $\rho$ with the profile fraction
$R$. Since $2\le i\le5$ and $2\le T\le5$,
\[
  |I-TY|=\left|\sum_i(i-T)y_i\right|\le3Y,
\]
so (7.13) gives
\[
  |\rho-R|\le\frac{3Y}{\alpha-T}=O(Y/\alpha).
\]
If $\rho\ge1/32$, then $Y\le1$ and the preceding bound implies
$R\ge1/64$ for all sufficiently large $n$. Hence $x\mapsto x\log x$ is
uniformly Lipschitz on the interval between $R$ and $\rho$. Moreover,
$-R\log R\le1-R=Y$. Thus
there is an absolute constant $C_\rho>0$ such that
\[
\begin{aligned}
&\left|n\rho\log\rho
      -k_{\mathrm{co}}\alpha R\log R\right|\\
&\qquad\le
 k_{\mathrm{co}}(\alpha-T)C_\rho|\rho-R|
 +k_{\mathrm{co}}T|R\log R|
 \le C_\rho k_{\mathrm{co}}Y,
\end{aligned}
\]
which proves
\begin{equation*}
 n\rho\log\rho
 =k_{\mathrm{co}}\alpha R\log R+O(k_{\mathrm{co}}Y).
 \tag{7.19}
\end{equation*}
Under the standing condition $\rho\ge1/32$, combining (7.14), (7.15a),
(7.18), and (7.19) yields one absolute constant $C_{\mathrm{diag}}$ such that
\begin{equation*}
 \log D(\ell)
 \le k_{\mathrm{co}}\alpha\Phi_T(z)
     +C_{\mathrm{diag}}k_{\mathrm{co}}Y
       \log\!\left(\frac{\mathrm e}{Y}\right)
     +C_{\mathrm{diag}}\log n,
 \tag{7.20}
\end{equation*}
where
\begin{equation*}
 \Phi_T(z)=R\log R+\frac{\log 2}{2}(I_r-TR).
 \tag{7.21}
\end{equation*}
All constants and eventuality thresholds in this extraction are independent of
the phase.

\displayheading{Uniform rate negativity}
The four-deficit geometry gives
\begin{equation*}
 I_r-TR\le(5-T)R,
 \qquad
 I_r-TR=\sum_i(T-i)y_i\le(T-2)(1-R).
 \tag{7.22}
\end{equation*}
Recall that $q=\log 2$ and $Y=1-R$. Multiplying the first inequality in
(7.22) by $Y$, the second by $R$, and adding gives
\begin{equation*}
 I_r-TR\le3RY.
 \tag{7.22a}
 \label{eq:partial-diagonal-combined-structural-v3}
\end{equation*}
For $0<R\le1$,
\[
 \log R\le\frac{2(R-1)}{R+1}.
\]
Indeed, for $x=(1-R)/R\ge0$, the function
$\log(1+x)-2x/(2+x)$ has derivative
$x^2/((1+x)(2+x)^2)\ge0$ and vanishes at zero.  We also use (2.0).

The condition $\rho\ge1/32$ above implies $R\ge1/64$ eventually.  If
$1/64\le R\le3/4$, then
\[
\begin{aligned}
 \Phi_T
 &\le-\frac{2R}{1+R}Y+\frac{21}{20}RY\\
 &=-\left(\frac{2}{1+R}-\frac{21}{20}\right)RY
 \le-\frac{13}{8960}Y.
\end{aligned}
\]
Here $2/(1+R)\ge8/7$ and $R\ge1/64$. If $3/4\le R\le1$, then (3.6) and
(5.13) give
$T=T_0+O(\log\log n/\log n)+O((\log n)^2/n)<4$ eventually. The second
inequality in (7.22) therefore implies
$I_r-TR\le2Y$. Since $\log R\le R-1=-Y$,
\[
  \Phi_T\le-RY+qY=(q-R)Y\le-\frac1{20}Y.
\]
Thus, whenever $\rho\ge1/32$, $1/64\le R\le1$, and $T<4$ as above,
\begin{equation*}
  \Phi_T(z)\le-\frac{Y}{5000}.
  \tag{7.23}
\end{equation*}

\Needspace{12\baselineskip}
\displayheading{Uniform summation over the central range}
Assume
\begin{equation*}
  m>\eta n,
  \qquad
  n-m>n/32.
  \tag{7.24}
\end{equation*}
Then $\rho>1/32$. Moreover,
\[
  \frac mn=\frac{\alpha Y-I}{\alpha-T}
  \le\frac{\alpha Y}{\alpha-T}
\]
and $m/n>\eta$ imply that, for all sufficiently large $n$,
\[
  Y>\eta\frac{\alpha-T}{\alpha}\ge\frac\eta2.
\]
Define the deterministic error ratio
\begin{equation*}
  \varepsilon_n^{\mathrm{diag}}
  :=
  \frac{C_{\mathrm{diag}}\log(2\mathrm e/\eta)}{\alpha}
  +
  \frac{2C_{\mathrm{diag}}\log n}{k_{\mathrm{co}}\alpha\eta}.
  \tag{7.24a}
\end{equation*}
Since
\[
  \alpha=\Theta(\log n),
  \qquad
  k_{\mathrm{co}}=\Theta(n/\log n),
  \qquad
  \eta=\Theta(\log\log n/\log n),
\]
one has $\varepsilon_n^{\mathrm{diag}}\to0$, uniformly in the phase. Because
$Y\ge\eta/2$, equations (7.20) and (7.23) give
\[
  \log D(\ell)
  \le
  -k_{\mathrm{co}}\alpha Y
   \left(\frac1{5000}-\varepsilon_n^{\mathrm{diag}}\right).
\]
For all sufficiently large $n$, the parenthesis is at least $1/10000$.
Moreover, $\alpha\eta=\Theta(\log\log n)$. Hence there is a phase-independent
$c>0$ such that
\begin{equation*}
  D(\ell)
  \le
  \exp\{-c k_{\mathrm{co}}\log\log n\}.
  \tag{7.25}
\end{equation*}
Fix phase-independent constants $0<c_K<C_K$ such that, eventually,
\[
 c_K\frac n{\log n}\le k_{\mathrm{co}}
 \le C_K\frac n{\log n}.
\]
Define the phase-independent envelope
\[
  \varepsilon_n^{\mathrm{central}}
  :=
  \left(C_K\frac n{\log n}+1\right)^4
  \exp\!\left\{-c c_K\frac n{\log n}\log\log n\right\}.
\]
There are at most $(k_{\mathrm{co}}+1)^4$ subprofiles, so the total
central-range contribution is at most
$\varepsilon_n^{\mathrm{central}}$. Since
$n\log\log n/\log n\gg\log n$, this deterministic sequence tends to zero.

\displayheading{Full corner}

Put $h=\mathbf k-\ell$ and write
\[
  v(h):=\sum_i u_i h_i=n-m.
\]
The full corner is the range
\begin{equation*}
  v(h)\le\frac n{32}.
  \tag{7.26}
\end{equation*}
We use the exact reverse recurrence (7.6) without replacing any ambient
falling factorial.

The residual first moments are uniformly small. For
every integer $0\le w\le n$ and every one of the four block sizes, the ratio is
zero when $w<u_i$; otherwise
\[
  \frac{\mu_{u_i}(w)}{\mu_{u_i}(n)}
  =\frac{(w)_{u_i}}{(n)_{u_i}}
  =\prod_{r=0}^{u_i-1}\frac{w-r}{n-r}
  \le\left(\frac wn\right)^{u_i}.
\]
Equations (2.2) and (2.8) give
$\mu_{u_i}(n)\le n^{6+o(1)}$ uniformly in the phase. Moreover,
$u_i=(2/\log 2+o(1))\log n$, and
$\log 32=5\log 2$. Consequently, uniformly for $w\le n/32$,
\begin{equation*}
  \mu_{u_i}(w)
  \le
  n^{6+o(1)}32^{-u_i}
  =n^{-4+o(1)}.
  \tag{7.26a}
\end{equation*}
In particular, there is one phase-independent threshold after which
\begin{equation*}
  \mu_{u_i}(w)\le n^{-3}
  \qquad
  (w\le n/32,\ 2\le i\le5).
  \tag{7.26b}
\end{equation*}
The relaxed exponent $-3$ leaves a fixed margin and is all that the recurrence
requires.

Fix a final residual profile $h$ satisfying (7.26), and expose its blocks in
any order. Let $a$ be the current residual profile and
$v(a)=\sum_i u_i a_i$. If the next block has type $i$, then
$a_i<h_i$ and therefore
\[
  v(a)+u_i\le v(h)\le n/32.
\]
The exact recurrence (7.6), $k_i-a_i\le k_i\le n$, and (7.26b) now give
\[
  \frac{B(a+e_i)}{B(a)}
  =
  \frac{2(k_i-a_i)\mu_{u_i}(v(a)+u_i)}{(a_i+1)^2}
  \le2n^{-2}<1
\]
for all sufficiently large $n$.  By convention $M_{\mathbf0}(0)=1$, and hence
$B(0)=1$.  Thus $B$ never increases along the path from $0$ to $h$, and
\begin{equation*}
  B(h)\le B(0)=1.
  \tag{7.27}
\end{equation*}
By the exact complementary identity (7.5) and the phase-uniform first-moment
growth estimate (5.19),
\begin{equation*}
  D(\mathbf k-h)
  =\frac{B(h)}{\Exp{Z_{\mathbf k}^{\mathrm{sgn}}}}
  \le \exp\{-c_Z(\log n)^2\}.
  \tag{7.27a}
\end{equation*}
There are at most $(k_{\mathrm{co}}+1)^4$ residual profiles. Therefore
\[
  \sum_{v(h)\le n/32}D(\mathbf k-h)
  \le
  \exp\{4\log(k_{\mathrm{co}}+1)-c_Z(\log n)^2\}
  =o(1)
\]
uniformly in the phase.  More explicitly, it is bounded by the
phase-independent sequence
\[
 \varepsilon_n^{\mathrm{full}}
 :=\left(C_K\frac n{\log n}+1\right)^4e^{-c_Z(\log n)^2}
 \longrightarrow0.
\]

\displayheading{Disjoint three-range assembly}

For sufficiently large $n$, one has $\eta<31/32$. The coordinate box of all
partial subprofiles is then the disjoint union of
\[
\begin{aligned}
  \mathcal P_n^{\mathrm{empty}}&:=\{\ell:m\le\eta n\},\\
  \mathcal P_n^{\mathrm{central}}&:=
    \{\ell:m>\eta n,\ n-m>n/32\},\\
  \mathcal P_n^{\mathrm{full}}&:=\{\ell:n-m\le n/32\}.
\end{aligned}
\]
Indeed, if $\ell\in\mathcal P_n^{\mathrm{full}}$, then
$m\ge31n/32>\eta n$, so the full corner cannot meet the empty corner; after
excluding those two cases, the two strict inequalities defining
$\mathcal P_n^{\mathrm{central}}$ are automatic. Thus no boundary term is counted twice and no
subprofile is omitted.

The three estimates proved above have the form
\[
\begin{aligned}
  1&\le\sum_{\ell\in\mathcal P_n^{\mathrm{empty}}}D(\ell)
      \le e^{\varepsilon_n^{\mathrm{empty}}},\\
  0&\le\sum_{\ell\in\mathcal P_n^{\mathrm{central}}}D(\ell)
      \le\varepsilon_n^{\mathrm{central}},\\
  0&\le\sum_{\ell\in\mathcal P_n^{\mathrm{full}}}D(\ell)
      \le\varepsilon_n^{\mathrm{full}},
\end{aligned}
\]
\Needspace{10\baselineskip}
where every error sequence is deterministic, phase-independent, and tends to
zero. The lower bound in the first line is the term $D(0)=1$. Adding the three
disjoint sums gives
\[
  1
  \le
  \sum_{0\le\ell_i\le k_i}D(\ell)
  \le
  e^{\varepsilon_n^{\mathrm{empty}}}
  +\varepsilon_n^{\mathrm{central}}
  +\varepsilon_n^{\mathrm{full}}
  =1+o(1),
\]
which proves (7.7) with one eventuality threshold for the complete phase.
 \end{proof}

\section{Canonical high cells and endpoint-table comparison}
\label{sec:canonical-high-cells-v3}

We estimate the large overlap cells.  We first sum all labeled stub
matchings realizing a fixed high-cell demand, compare their multiplicities
with full containment, and regroup the reference weights by endpoint table.
The finite identities precede the eventual four-size estimates; their final
asymptotic sizes are collected in the last subsection.  Residual cell rewards
and the cycle-space factor enter in Section~9.

Section~7 supplies nonnegative one-sided reference weights whose total mass is
$1+o(1)$ for the endpoint-table comparison below, with every common-class
overlap still present. Every overlap matrix is assigned to exactly one
canonical high-cell skeleton; Section~9 retains the residual data needed to
reconstruct it.

The table-level extraction used across Sections~8--9 is summarized in
Figure~\ref{fig:canonical-overlap-decomposition-v3}.
Proposition~\ref{prop:canonical-exact-overlap-decomposition-v3} will supply its
labeled-matching lift, while the last line separates the two factors bounded
independently. In the figure, $w_{\mathrm{hi}}$ denotes the high-skeleton
weight defined below, and $\mathcal A$ denotes the residual attachment from
Section~9. The estimates below use only the definitions of this section.

\begin{figure}[htbp]
\centering
\begin{tikzpicture}[
  x=0.50cm,
  y=0.48cm,
  >=Latex,
  every node/.style={font=\scriptsize},
  highcell/.style={draw=blue!55!black,fill=blue!10,line width=0.65pt}
]
  \begin{scope}[shift={(0,0)}]
    \draw[gray!65] (0,0) grid (5,4);
    \foreach \x/\y/\z in {0/3/j_1,2/2/j_2,4/0/j_3}{
      \filldraw[highcell] (\x,\y) rectangle ++(1,1);
      \node at (\x+0.5,\y+0.5) {$\z$};
    }
    \node[font=\footnotesize] at (2.5,4.65) {$r=(r_{ab})$};
    \node[align=center,text width=3.2cm] at (2.5,-0.85)
      {full overlap table};
  \end{scope}

  \node[font=\large] at (6.0,2) {$=$};

  \begin{scope}[shift={(7,0)}]
    \draw[gray!65] (0,0) grid (5,4);
    \foreach \x/\y/\z in {0/3/j_1,2/2/j_2,4/0/j_3}{
      \filldraw[highcell] (\x,\y) rectangle ++(1,1);
      \node at (\x+0.5,\y+0.5) {$\z$};
    }
    \node[font=\footnotesize] at (2.5,4.65) {$j_P$};
    \node[align=center,text width=3.4cm] at (2.5,-0.85)
      {high table supported on $P$, with $j=r|_P$};
  \end{scope}

  \node[font=\large] at (13.25,2) {$+$};

  \begin{scope}[shift={(14.5,0)}]
    \draw[gray!65] (0,0) grid (5,4);
    \foreach \x/\y in {0/3,2/2,4/0}{
      \filldraw[highcell] (\x,\y) rectangle ++(1,1);
      \node at (\x+0.5,\y+0.5) {$0$};
    }
    \node[text=gray!75!black] at (1.5,1.5) {$\le R_0$};
    \node[text=gray!75!black] at (3.5,3.5) {$\le R_0$};
    \node[font=\footnotesize] at (2.5,4.65) {$r'=(r'_{ab})$};
    \node[align=center,text width=3.5cm] at (2.5,-0.85)
      {capped residual $r'$, with $r'|_P=0$};
  \end{scope}

  \node[font=\small] at (9.75,-3.10) {$\displaystyle
    \frac{\mathbb E\!\left[(Z_{\mathbf k}^{\mathrm{sgn}})^2\right]}
         {\left(\mathbb E Z_{\mathbf k}^{\mathrm{sgn}}\right)^2}
    =\sum_{(P,j)}
      \underbrace{w_{\mathrm{hi}}(P,j)}_{\text{high-cell factor}}
      \qquad
      \underbrace{\mathcal A(P,j)}_{\text{residual attachment}}$};
\end{tikzpicture}
\caption{Canonical high-cell extraction at the overlap-table level. The
highlighted cells $P=\{(a,b):r_{ab}>R_0\}$ form a matching and have
multiplicities $j=r|_P$;
extending $j$ by zero off $P$ gives $r=j_P+r'$, where the capped residual
table $r'$ vanishes on $P$.
Unlabeled cells of the first panel are unrestricted table entries; they are
zero off $P$ in $j_P$ and at most $R_0$ off $P$ in $r'$.
Proposition~\ref{prop:canonical-exact-overlap-decomposition-v3} (where
Section~9 writes $M=P$) lifts this table decomposition to a bijection of
labeled configuration matchings by retaining both the selected-pair set
$S\in\mathfrak F(P,j)$ and the residual matching $\Pi_{\mathrm{res}}$, and
yields the displayed normalized factorization.}
\label{fig:canonical-overlap-decomposition-v3}
\end{figure}

\subsection{All labeled realizations of a high skeleton}

Let $U:=\alpha-2$ be the largest class size in the fixed four-size profile and put
\[
  R_0:=\left\lfloor\frac U2\right\rfloor.
\]
A cell is \emph{high} if its multiplicity exceeds $R_0$. Two high cells cannot
share a row or a column: otherwise that row or column would contain more than
$U$ stubs, because $2(R_0+1)>U$. Hence all high cells form a canonical
bipartite matching, denoted by $P$.

Here $P$ is a matching of row- and column-class slots.  It is distinct from
the complete configuration matching of all labeled stubs and from a partial
stub matching, which specifies selected pairs inside the cells of $P$.
Because the edges of $P$ have disjoint endpoints, the corresponding labeled
realization counts factor.

For $e\in P$, let $s_e$ and $t_e$ be its endpoint block sizes and set
\[
  m_e:=\min\{s_e,t_e\},
  \qquad
  d_e:=|s_e-t_e|,
  \qquad
  h_e:=m_e-j_e,
\]
where $j_e$ is the realized multiplicity. Thus $m_e$ is the full-containment
multiplicity, $h_e$ is the deficit, and $j_e=m_e-h_e$. Since $j_e>U/2$ and
$m_e\le U$,
\begin{equation}
  2h_e<m_e.
  \label{eq:half-deficit-v3}
\end{equation}
Put $J:=\sum_{e\in P}j_e$.

For fixed $(P,j)$, let $\mathfrak F(P,j)$ be the finite set of labeled partial
stub matchings containing exactly $J$ pairs, all in cells of $P$, with exactly
$j_e$ pairs in each cell $e\in P$.  Thus
\(\mathfrak F(P,j)\) is the set of labeled realizations of the high skeleton
\((P,j)\).

A size-$j$ partial matching between labeled blocks of sizes $s$ and $t$ can be
chosen in
\[
  \binom sj\binom tj j!
  =
  \frac{(s)_j(t)_j}{j!}
\]
ways: choose its two endpoint sets and then the bijection between them.

\begin{proposition}[Aggregate weight of a fixed cell matching]
\label{prop:aggregate-high-weight-v3}
Let $P$ be a fixed matching support arising from two copies of the fixed
feasible $n$-vertex profile. For a multiplicity vector
$j=(j_e)_{e\in P}$ satisfying
$0\le j_e\le\min\{s_e,t_e\}$ for every $e\in P$, one has $J\le n$, and the
total configuration probability multiplied by the exposed local reward, summed
over \(\mathfrak F(P,j)\), is
\begin{equation}
  w(P,j)
  =
  \frac{
    \displaystyle\prod_{e\in P}(s_e)_{j_e}(t_e)_{j_e}
  }{
    \displaystyle(n)_J\prod_{e\in P}j_e!
  }
  \prod_{e\in P}g(j_e).
  \label{eq:aggregate-high-weight-v3}
\end{equation}
\end{proposition}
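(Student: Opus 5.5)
We first read the summand carefully. The statement does not mean the probability of the event that exactly $j_e$ pairs of the full configuration lie in each cell of $P$. It means the sum, over every labeled partial stub matching $S\in\mathfrak F(P,j)$, of the configuration probability that $S$ is contained in the uniform perfect matching of the $n$ row stubs with the $n$ column stubs, multiplied by the exposed local reward $\prod_{e\in P}g(j_e)$. The reward depends only on $(P,j)$ and not on $S$. So we will prove the identity
\[
  w(P,j)=\Bigl(\prod_{e\in P}g(j_e)\Bigr)\sum_{S\in\mathfrak F(P,j)}\mathbb P_{\mathrm{match}}(S\subseteq\Pi),
\]
where $\Pi$ is the uniform perfect matching defining the law (6.2). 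The bound $J\le n$ is immediate, since $j_e\le\min\{s_e,t_e\}$ and the row blocks indexed by the edges of $P$ are distinct slots. Hence $J\le\sum_e s_e\le\sum_a s_a=n$, and $(n)_J\ne0$.

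\textbf{Step 1: probability of a fixed partial matching.} We count as in the proof of Lemma~\ref{lemma-6.2-joint-prescribed-cell-bound}. For a fixed set $S$ of $J$ disjoint row--column stub pairs, the number of perfect matchings of the $n$ row stubs with the $n$ column stubs that contain $S$ is $(n-J)!$, out of $n!$ in total. Hence
\[
  \mathbb P_{\mathrm{match}}(S\subseteq\Pi)=\frac{1}{(n)_J},
\]
independently of $S$. This is exact, and it is the same "remaining matching is uniform on unused stubs" fact used for (6.8).

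\textbf{Step 2: counting $\mathfrak F(P,j)$.} Because $P$ is a matching of slots, distinct edges $e\in P$ use disjoint row blocks and disjoint column blocks. A labeled realization therefore consists of independent choices, one per edge. For each $e$, we choose a size-$j_e$ partial matching between the $s_e$ labeled row stubs and the $t_e$ labeled column stubs of $e$. By the count displayed just before the proposition, there are $(s_e)_{j_e}(t_e)_{j_e}/j_e!$ ways to do this. Disjointness across edges ensures that every tuple of per-edge choices yields a valid partial matching with exactly $j_e$ pairs in each cell $e$. Conversely, every element of $\mathfrak F(P,j)$ arises exactly once. Consequently
\[
  |\mathfrak F(P,j)|=\prod_{e\in P}\frac{(s_e)_{j_e}(t_e)_{j_e}}{j_e!}.
\]

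\textbf{Step 3: assembly.} Multiplying the cardinality from Step~2, the constant probability from Step~1, and the reward $\prod_e g(j_e)$ gives the displayed formula for $w(P,j)$.

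The only point needing care is the bijectivity in Step~2, in particular that no partial matching is counted twice. This follows from the matching property of $P$ and from labeling both stubs and slots. We also note that the quantity is a sum of containment probabilities, so it overcounts configurations in which the realized cell has more than $j_e$ pairs. That overcount is harmless, and in fact intended, for the later comparison. Consistent with the paper's phrasing "total configuration probability \dots summed over $\mathfrak F(P,j)$", we do not try to restrict to exact multiplicities here.
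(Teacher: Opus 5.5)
Your proposal is correct and matches the paper's proof: count $|\mathfrak F(P,j)|$ as a product over the cells of $P$ (valid because $P$ is a matching, so distinct cells use disjoint stub sets), multiply by the uniform containment probability $1/(n)_J$, and then multiply by the reward $\prod_{e\in P}g(j_e)$. You also justify $J\le n$ explicitly, using the fact that the edges of $P$ have distinct row endpoints; the paper states this bound but leaves its proof implicit.
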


\begin{proof}
Because $P$ is a matching, distinct selected cells use disjoint row and column
stub sets. Therefore
\[
  |\mathfrak F(P,j)|
  =\prod_{e\in P}\frac{(s_e)_{j_e}(t_e)_{j_e}}{j_e!}.
\]
A prescribed set of $J$ stub pairs occurs with probability $(n)_J^{-1}$ in the
uniform bipartite configuration matching. Multiplying this probability by the
displayed cardinality and by the exposed reward
$\prod_{e\in P}g(j_e)$ gives \eqref{eq:aggregate-high-weight-v3}.
\end{proof}

We call $(P,j)$ a \emph{feasible high skeleton} when $P$ is a block matching
and
\[
  R_0<j_e\le m_e\qquad(e\in P).
\]
For a feasible high skeleton, write
\[
  w_{\mathrm{hi}}(P,j):=w(P,j).
\]
The same high-skeleton weight appears in the exact conditional decomposition
of Section~9.

\begin{remark}
The matching hypothesis is essential. If two selected cells shared a row or
a column, their endpoint choices would compete for the same stubs and the
one-cell realization counts would not multiply.
\end{remark}

\subsection{Deficits and the single ambient loss}

For endpoint sizes $m$ and $m+d$, define
\begin{equation}
  R_{m,d}(h)
  :=
  \frac{\binom mh}
       {(d+1)(d+2)\cdots(d+h)}
  2^{-hm+h(h+1)/2},
  \qquad
  R_{m,d}(0):=1.
  \label{eq:local-deficit-ratio-v3}
\end{equation}
The product in the denominator is empty when $h=0$.

\begin{lemma}[Exact one-cell deficit ratio]
\label{lem:exact-local-ratio-v3}
For integers $m\ge1$, $d\ge0$, and $h\ge0$ with $2h<m$, the ratio of the aggregate
one-cell factor at multiplicity $m-h$ to its full-containment value at
multiplicity $m$ is $R_{m,d}(h)$.
\end{lemma}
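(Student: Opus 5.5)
The plan is a direct computation from the aggregate weight in Proposition~\ref{prop:aggregate-high-weight-v3}. I read the \emph{aggregate one-cell factor} of a cell $e$ with endpoint sizes $s_e,t_e$ at multiplicity $j$ as the $e$-th factor of \eqref{eq:aggregate-high-weight-v3} without the global $(n)_J$:
\[
  \frac{(s_e)_j(t_e)_j}{j!}\,g(j).
\]
The ambient loss is handled separately in the rest of this subsection. The factor is symmetric in $s_e,t_e$, so I may take endpoint sizes $m$ and $m+d$, with $m=\min\{s_e,t_e\}$ and $d=|s_e-t_e|$. I then form the quotient of the values at $j=m-h$ and $j=m$ and split it into a combinatorial part and a reward part.

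For the combinatorial part I use only $(x)_r=x!/(x-r)!$.
\begin{itemize}
\item The row falling factorial gives $(m)_{m-h}/(m)_m=1/h!$.
\item The column falling factorial gives $(m+d)_{m-h}/(m+d)_m=d!/(d+h)!$, which equals $1/\bigl((d+1)\cdots(d+h)\bigr)$.
\item The pairing factorials give $(1/(m-h)!)/(1/m!)=m!/(m-h)!$.
\end{itemize}
The product is
\[
  \binom mh\frac1{(d+1)\cdots(d+h)}.
\]
All these falling factorials are nonzero, since $0\le m-h\le m\le m+d$.

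For the reward part I use $g(x)=2^{\binom x2-1}$. This formula is valid for all $x\ge2$, because $2^{\binom22-1}=1=g(2)$. The hypothesis $2h<m$ gives $m-h>m/2$. Hence:
\begin{itemize}
\item if $m\ge3$, both $m$ and $m-h$ are at least $2$, and the quotient is $2^{\binom{m-h}2-\binom m2}$;
\item if $m\in\{1,2\}$, the hypothesis forces $h=0$, and the ratio is trivially $1=R_{m,d}(0)$.
\end{itemize}
The exponent identity
\[
  \binom{m-h}2-\binom m2=\frac{(m-h)(m-h-1)-m(m-1)}2=-hm+\frac{h(h+1)}2
\]
then recovers the factor $2^{-hm+h(h+1)/2}$ of \eqref{eq:local-deficit-ratio-v3}.

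Multiplying the two parts gives $R_{m,d}(h)$. The case $h=0$ matches the convention $R_{m,d}(0)=1$, with the empty product in the denominator.

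The only delicate point is the small-multiplicity branch $g(0)=g(1)=g(2)=1$, where the closed form for $g$ could fail. The argument above disposes of it: $g(2)$ agrees with the closed form, and $2h<m$ excludes multiplicity $0$ or $1$ whenever $h>0$.
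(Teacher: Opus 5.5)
Your proposal is correct and follows the paper's own proof. Both split the quotient into the matching-count ratio $\binom mh/\bigl((d+1)\cdots(d+h)\bigr)$ and the reward ratio $g(m-h)/g(m)=2^{-hm+h(h+1)/2}$. Your explicit check that $2h<m$ either forces $h=0$ or keeps both multiplicities at least $2$, where $g(x)=2^{\binom x2-1}$ holds, fills in a detail the paper leaves implicit.
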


\begin{proof}
The ratio of the physical matching counts is
\[
  \frac{(m)_{m-h}(m+d)_{m-h}}{(m-h)!}
  \frac{m!}{(m)_m(m+d)_m}
  =
  \frac{\binom mh}{(d+1)(d+2)\cdots(d+h)}.
\]
The local signed rewards satisfy
\[
  \frac{g(m-h)}{g(m)}
  =
  2^{-hm+h(h+1)/2}.
\]
Multiplying the two identities proves the claim.
\end{proof}

Write
\[
  \mathbf m:=(m_e)_{e\in P},
  \qquad
  w_{\mathrm{full}}(P):=w(P,(m_e)_{e\in P}),
  \qquad
  J_*:=\sum_{e\in P}m_e,
  \qquad
  h_\bullet:=J_*-J=\sum_{e\in P}h_e.
\]
After the one-cell ratios have been extracted, the only factor that still
couples different cells is the ambient denominator:
\begin{equation}
  \frac{(n)_{J_*}}{(n)_J}
  =(n-J)_{h_\bullet}
  \le n^{h_\bullet}.
  \label{eq:one-global-falling-factorial-v3}
\end{equation}

\begin{lemma}[Aggregate deficit comparison]
\label{lem:aggregate-deficit-comparison-v3}
For every $h=(h_e)_{e\in P}\in\mathbb N_0^P$ satisfying
$2h_e<m_e$ for each $e\in P$,
\begin{equation}
  w(P,\mathbf m-h)
  \le
  w_{\mathrm{full}}(P)
  \prod_{e\in P}n^{h_e}R_{m_e,d_e}(h_e).
  \label{eq:aggregate-deficit-comparison-v3}
\end{equation}
\end{lemma}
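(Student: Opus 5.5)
We compare $w(P,\mathbf m-h)$ with $w_{\mathrm{full}}(P)$ factor by factor, using the formula of Proposition~\ref{prop:aggregate-high-weight-v3}. Since $2h_e<m_e$, every multiplicity $j_e=m_e-h_e$ lies in $[0,m_e]$. Both weights are therefore well defined, and their denominators $(n)_J$ and $(n)_{J_*}$ are nonzero because $J\le J_*\le n$. The aggregate weight splits as
\[
  \frac{1}{(n)_J}\prod_{e\in P}\frac{(s_e)_{j_e}(t_e)_{j_e}}{j_e!}\,g(j_e),
\]
that is, one global falling factorial times a product of one-cell factors. The matching property of $P$ is what makes this split valid: it is already built into the formula.

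First, we take the exact quotient
\[
  \frac{w(P,\mathbf m-h)}{w_{\mathrm{full}}(P)}
  =\frac{(n)_{J_*}}{(n)_J}\prod_{e\in P}
   \frac{(s_e)_{m_e-h_e}(t_e)_{m_e-h_e}\,g(m_e-h_e)/(m_e-h_e)!}
        {(s_e)_{m_e}(t_e)_{m_e}\,g(m_e)/m_e!}.
\]
For each $e$ we may relabel so that $\{s_e,t_e\}=\{m_e,m_e+d_e\}$; the cell factor is symmetric in $s_e$ and $t_e$. Lemma~\ref{lem:exact-local-ratio-v3} then identifies each cell quotient exactly as $R_{m_e,d_e}(h_e)$. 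Its hypotheses hold: $m_e\ge1$ because $m_e>2h_e\ge0$, and $2h_e<m_e$ holds by assumption.

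Second, we bound the global prefactor. By \eqref{eq:one-global-falling-factorial-v3},
\[
  \frac{(n)_{J_*}}{(n)_J}=(n-J)_{h_\bullet}\le n^{h_\bullet}=\prod_{e\in P}n^{h_e}.
\]
Multiplying this bound by the exact cell ratios gives \eqref{eq:aggregate-deficit-comparison-v3}. All quantities involved are nonnegative, so the inequality direction is preserved.

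The only point needing care is the application of Lemma~\ref{lem:exact-local-ratio-v3} when $m_e$ is small, in particular whether $g$ takes its ``$\le2$'' values there. The lemma is stated for all $m\ge1$ with $2h<m$, and it uses only the closed formula for $g$. However, its displayed reward ratio $2^{-hm+h(h+1)/2}$ was derived from the formula $g(x)=2^{\binom x2-1}$ valid for $x\ge3$. So I would check that the application needs $m_e-h_e\ge3$. For feasible skeletons this is automatic, since $j_e>R_0$ and $R_0\to\infty$. For a general $h$ merely satisfying $2h_e<m_e$, I would either restrict the statement to multiplicities with $j_e\ge3$ (which is all that is used), or note that when $j_e\le2$ the reward $g(j_e)=1$ is at least the formula value $2^{\binom{j_e}{2}-1}$. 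In the second case the exact identity would become an inequality, and it would have to go in the right direction. This bookkeeping is the main obstacle; the rest is the exact cancellation above.
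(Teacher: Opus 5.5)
Your argument is the paper's proof: the exact per-cell ratio from Lemma~\ref{lem:exact-local-ratio-v3}, combined with the single global bound $(n)_{J_*}/(n)_J=(n-J)_{h_\bullet}\le n^{h_\bullet}$. The small-multiplicity worry does not arise, so you need neither fallback; your second fallback would in fact give the wrong inequality direction. The reason is that $h_e=0$ gives ratio one, while $h_e\ge1$ with $2h_e<m_e$ forces $m_e\ge3$ and $m_e-h_e\ge2$; since $g(2)=1=2^{\binom{2}{2}-1}$, the formula $g(x)=2^{\binom{x}{2}-1}$ holds for every $x\ge2$, so the reward ratio $2^{-h_em_e+h_e(h_e+1)/2}$ is exact in every case.
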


\begin{proof}
Apply Lemma~\ref{lem:exact-local-ratio-v3} in each selected cell, apply
\eqref{eq:one-global-falling-factorial-v3} once, and use
$n^{h_\bullet}=\prod_{e\in P}n^{h_e}$.
\end{proof}

The order of operations matters: first sum all labeled realizations, and then apply
the single aggregate bound $(n-J)_{h_\bullet}\le n^{h_\bullet}$ from
\eqref{eq:one-global-falling-factorial-v3}.

\subsection{Summing all positive deficits}

For a selected cell $e$, enlarge the exact positive-deficit range to
\[
  A_e:=\{h\in\mathbb N:h\ge1,\ 2h<m_e\}.
\]
All weights are nonnegative, so the enlargement preserves the direction of the
upper bound.

For any nonnegative $u_e(h)$, set $u_e(0):=1$. Expanding the finite product
gives the optional-choice identity
\[
  \sum_{\omega\in\prod_{e\in P}(\{0\}\cup A_e)}
    \prod_{e\in P}u_e(\omega_e)
  =
  \prod_{e\in P}
    \left(1+\sum_{h\in A_e}u_e(h)\right),
\]
where each $\omega_e$ is either the zero-deficit choice, of weight one, or one
positive deficit $h\in A_e$.

For $2h<m$, integrality gives $2h+1\le m$. Hence
\[
  \frac{h+1}{2}
  \le
  \frac{m+1}{4}
  \le
  m-\left\lfloor\frac{3m-1}{4}\right\rfloor.
\]
Multiplying by $h$ yields
\begin{equation}
  h\left\lfloor\frac{3m-1}{4}\right\rfloor
  \le
  hm-\frac{h(h+1)}2.
  \label{eq:three-quarter-budget-v3}
\end{equation}
Together with $\binom mh\le m^h$ and
\eqref{eq:local-deficit-ratio-v3}, this gives
\begin{equation}
  n^hR_{m,d}(h)
  \le
  \left(
    \frac{nm}{2^{\lfloor(3m-1)/4\rfloor}}
  \right)^h.
  \label{eq:three-quarter-charge-v3}
\end{equation}

For the remainder of this section, assume $\alpha>8$ and reindex the deficit
labels by the bijection $i\mapsto \nu=i+2$ from $\{0,1,2,3\}$ to
$\{2,3,4,5\}$, with inverse $\nu\mapsto \nu-2$. This size condition is
included in the common eventuality threshold of
Subsection~\ref{subsec:uniformity-phase-v3}. Define
\[
  \kappa_i:=k_{i+2},
  \qquad
  u_i:=\alpha-2-i,
  \qquad 0\le i\le3.
\]
Here the subscript $i+2$ labels a deficit:
$\kappa_i=k_{i+2}$ counts endpoint blocks of size
$u_i=\alpha-(i+2)$ on each partition side. This deficit-indexed convention
comes from Section~5 and differs from the size-indexed notation $k_s$ used
earlier. For
$i,j\in\{0,1,2,3\}$ let $m_{ij}:=\min\{u_i,u_j\}$ and put
\[
  \rho_{ij}
  :=
  \frac{n m_{ij}}{2^{\lfloor(3m_{ij}-1)/4\rfloor}},
  \qquad
  \rho_{\Sigma}:=\sum_{i,j=0}^{3}\rho_{ij}.
\]
Every local base in \eqref{eq:three-quarter-charge-v3} is at most
$\rho_{\Sigma}$.

\begin{lemma}[Coarse local deficit sum]
\label{lem:coarse-local-deficit-sum-v3}
Suppose $\rho_{\Sigma}\le1$. Then, for every selected cell $e$,
\[
  \sum_{h\in A_e}n^hR_{m_e,d_e}(h)
  \le
  (\alpha+1)\rho_{\Sigma}.
\]
\end{lemma}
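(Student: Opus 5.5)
The plan is to bound each term of the deficit sum by a geometric term and then sum. Fix a selected cell $e$ and write $m=m_e$ and $d=d_e$. The endpoint sizes of $e$ are two of the four class sizes $u_0,\dots,u_3$, so $m=\min\{s_e,t_e\}=m_{ij}$ for some pair $i,j\in\{0,1,2,3\}$.

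First, for every $h\in A_e$ we have $2h<m$, so \eqref{eq:three-quarter-charge-v3} applies. It gives
\[
  n^hR_{m,d}(h)\le\rho_{ij}^{\,h}\le\rho_\Sigma^{\,h}.
\]
Here the second inequality uses only that $\rho_{ij}$ is one nonnegative summand of $\rho_\Sigma$.

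Second, use the hypothesis $\rho_\Sigma\le1$. For every $h\ge1$ it gives $\rho_\Sigma^{\,h}\le\rho_\Sigma$. Hence each term of the sum is at most $\rho_\Sigma$.

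Third, count the terms. The set $A_e$ contains the integers $1\le h<m/2$, so $|A_e|\le m/2\le U/2<\alpha+1$. Multiplying the termwise bound by this count gives $\sum_{h\in A_e}n^hR_{m,d}(h)\le(\alpha+1)\rho_\Sigma$. If $A_e$ is empty, the sum is zero and the bound holds trivially.

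The only point needing care is the first step. We must make sure that \eqref{eq:three-quarter-charge-v3} was derived under exactly the constraint $2h<m$ and with no hidden dependence on $d$. This holds: the denominator $(d+1)\cdots(d+h)\ge1$ was simply dropped there, and $\binom mh\le m^h$ was used. So there is no real obstacle. One could alternatively sum the geometric series to get $\rho_\Sigma/(1-\rho_\Sigma)$, but that requires $\rho_\Sigma$ to be bounded away from $1$. The crude count by $\alpha+1$ avoids this, which is presumably why the statement takes that form.
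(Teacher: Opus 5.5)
Your proof is correct and follows the paper's argument: the three-quarter charge bounds each term by $\rho_{ij}^h\le\rho_\Sigma$ (using $h\ge1$ and $\rho_\Sigma\le1$), and you then multiply by the number of admissible deficits. The only difference is cosmetic: you count $|A_e|\le m_e/2\le U/2$, whereas the paper uses the cruder bound $|A_e|\le m_e\le\alpha+1$.
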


\begin{proof}
If $e$ has endpoint type $(i,j)$, then
\eqref{eq:three-quarter-charge-v3} bounds its term of deficit $h$ by
$\rho_{ij}^h$. Since $h\ge1$ and
$\rho_{ij}\le\rho_{\Sigma}\le1$, this is at most $\rho_{\Sigma}$. Moreover,
$|A_e|\le m_e\le\alpha+1$. Summation proves the bound.
\end{proof}

\begin{proposition}[Fixed-support all-deficit bound]
\label{prop:fixed-support-all-deficit-v3}
Write $\mathbf m=(m_e)_{e\in P}$. If $\rho_{\Sigma}\le1$, then
\begin{equation}
  \sum_{\substack{h_e\in\mathbb N_0\ (e\in P)\\
                  m_e-h_e>R_0\ \text{for every }e}}
    w(P,\mathbf m-h)
  \le
  w_{\mathrm{full}}(P)
  \left(1+(\alpha+1)\rho_{\Sigma}\right)^{|P|}.
  \label{eq:fixed-support-all-deficit-v3}
\end{equation}
\end{proposition}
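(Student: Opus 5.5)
The plan is to reparametrize by deficits and reduce to the optional-choice identity. Every multiplicity vector $j$ with $R_0<j_e\le m_e$ is written uniquely as $j=\mathbf m-h$, where $h_e=m_e-j_e\ge0$. The condition $m_e-h_e>R_0$ means $j_e>U/2$. Since $m_e\le U$, this gives $2h_e<m_e$, which is \eqref{eq:half-deficit-v3}. Hence every admissible $h$ lies in $\prod_{e\in P}(\{0\}\cup A_e)$, and each coordinate is either the zero choice or a positive deficit in $A_e$. All weights are nonnegative, so enlarging the index set to this full product only increases the left side of \eqref{eq:fixed-support-all-deficit-v3}.

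Next, bound each term by Lemma~\ref{lem:aggregate-deficit-comparison-v3}. Its hypothesis $2h_e<m_e$ holds coordinatewise, as just verified, so
\[
  w(P,\mathbf m-h)\le w_{\mathrm{full}}(P)\prod_{e\in P}u_e(h_e),
  \qquad u_e(h):=n^hR_{m_e,d_e}(h).
\]
Here $u_e(0)=1$ is consistent with the convention $R_{m,d}(0)=1$. The feasibility $0\le j_e\le m_e$ needed for $w$ in Proposition~\ref{prop:aggregate-high-weight-v3} is automatic. Note that the single global falling-factorial loss $(n-J)_{h_\bullet}\le n^{h_\bullet}$ has already been distributed as $\prod_e n^{h_e}$ inside the lemma, so no cross-cell coupling remains.

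Summing over the product set and applying the optional-choice identity gives
\[
  w_{\mathrm{full}}(P)\prod_{e\in P}\Bigl(1+\sum_{h\in A_e}n^hR_{m_e,d_e}(h)\Bigr).
\]
Lemma~\ref{lem:coarse-local-deficit-sum-v3}, valid under $\rho_\Sigma\le1$, bounds each inner sum by $(\alpha+1)\rho_\Sigma$. This yields $(1+(\alpha+1)\rho_\Sigma)^{|P|}$.

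I expect no real obstacle here. The only point needing care is to confirm that the enlargement from the exact range $R_0<m_e-h_e$ to $A_e\cup\{0\}$ is a superset. This follows from $m_e\le U$ together with $2(R_0+1)>U$. I should also confirm that Lemma~\ref{lem:aggregate-deficit-comparison-v3} is used with $w_{\mathrm{full}}(P)$ at full containment, which is feasible because $m_e=\min\{s_e,t_e\}$.
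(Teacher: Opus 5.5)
Your proposal is correct and follows the paper's proof essentially step for step. You apply the aggregate deficit comparison (Lemma~\ref{lem:aggregate-deficit-comparison-v3}), enlarge each coordinate's range to $\{0\}\cup A_e$ (justified by $2h_e<m_e$, i.e.\ \eqref{eq:half-deficit-v3}), expand with the optional-choice identity, and bound each factor by Lemma~\ref{lem:coarse-local-deficit-sum-v3}; enlarging before or after the termwise bound is immaterial, since $w(P,\mathbf m-h)$ is defined and nonnegative on the enlarged range.
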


\begin{proof}
Use Lemma~\ref{lem:aggregate-deficit-comparison-v3}, enlarge the deficit range
to $A_e$, apply the optional-choice identity above, and then use
Lemma~\ref{lem:coarse-local-deficit-sum-v3}.
\end{proof}

Define
$K:=\sum_{i=0}^{3}\kappa_i=k_{\mathrm{co}}$, the number of row-class slots.
Projection onto the row endpoint is injective on a block matching, and therefore
\begin{equation}
  |P|\le K.
  \label{eq:support-cardinality-v3}
\end{equation}

\subsection{Regrouping by endpoint table}

For a block matching $P$, let
\[
 \ell_{ij}(P):=\#\{e\in P:\text{$e$ joins row type $i$ to column type $j$}\},
 \qquad
 L(P):=(\ell_{ij}(P))_{0\le i,j\le3}.
\]
With this deficit-indexed type convention, for
$L=(\ell_{ij})\in\mathbb N_0^{4\times4}$ write
\[
  r=(r_i)_{i=0}^3,\qquad r_i=\sum_j\ell_{ij},
  \qquad
  c=(c_j)_{j=0}^3,\qquad c_j=\sum_i\ell_{ij}.
\]
Call $L$ \emph{feasible} if $r_i\le\kappa_i$ and $c_j\le\kappa_j$ for every
$i,j$. Such a table is realized by choosing the indicated row and column
block slots and pairing them cell by cell; conversely, every block matching
has a feasible table. For a feasible endpoint table $L$, define
\[
  W(L)
  :=
  \sum_{P:\,L(P)=L}w_{\mathrm{full}}(P).
\]
Equivalently, $W(L)$ is the sum of the full-containment terms over all
block-slot matching supports with table $L$.

Partitioning the finite set of block matchings by $L(P)$ gives
\begin{equation}
  \sum_P w_{\mathrm{full}}(P)
  =
  \sum_{L\ \mathrm{feasible}}W(L).
  \label{eq:reference-grouping-v3}
\end{equation}
Similarly, for every nonnegative function $F$ of the endpoint table,
\begin{equation}
  \sum_P w_{\mathrm{full}}(P)F(L(P))
  =
  \sum_{L\ \mathrm{feasible}}W(L)F(L).
  \label{eq:weighted-reference-grouping-v3}
\end{equation}

Whenever $\rho_{\Sigma}\le1$, combining
Proposition~\ref{prop:fixed-support-all-deficit-v3},
\eqref{eq:support-cardinality-v3}, and
\eqref{eq:reference-grouping-v3} yields, after enlarging the nonnegative sum
from canonical high supports to all block matchings,
\begin{equation}
  \sum_{\text{high skeletons}}w_{\mathrm{hi}}
  \le
  \left(\sum_{L\ \mathrm{feasible}}W(L)\right)
  \left(1+(\alpha+1)\rho_{\Sigma}\right)^K.
  \label{eq:coarse-realized-table-reduction-v3}
\end{equation}

The finite argument through
\eqref{eq:coarse-realized-table-reduction-v3} applies to any fixed endpoint
alphabet whose selected cells form a matching.  The four consecutive sizes
enter through the numerical deficit bound and the endpoint-table comparison
below.

\subsection{Endpoint-table comparison}

Continue under the standing assumption $\alpha>8$.

Recall that $u_i=a-i$ and $\kappa_i=k_{i+2}$ for $0\le i\le3$, where
$a=\alpha-2$ and the corresponding deficit label is $i+2$.
For a full-containment table $L=(\ell_{ij})$, set
\[
\begin{aligned}
  r&=(r_i)_{i=0}^3,& r_i&=\sum_j\ell_{ij},&
  c&=(c_j)_{j=0}^3,& c_j&=\sum_i\ell_{ij},\\
  x_{ij}&=\min\{u_i,u_j\},&&&
  J(L)&=\sum_{i,j}x_{ij}\ell_{ij}.
\end{aligned}
\]
The number of block matchings with table $L$ is
\[
 \frac{\prod_i(\kappa_i)_{r_i}\prod_j(\kappa_j)_{c_j}}
      {\prod_{i,j}\ell_{ij}!}.
\]
Indeed, row and column slot allocations initially produce two copies of the
cell-factorial denominator, while the bijections between the selected slots
inside each type cell contribute one copy back.  Thus the exact reference
weight is
\begin{equation}
  W(L)=
  \frac{\prod_i(\kappa_i)_{r_i}\prod_j(\kappa_j)_{c_j}}
       {\prod_{i,j}\ell_{ij}!}
  \frac1{(n)_{J(L)}}
  \prod_{i,j}
  \left[
    \frac{(u_i)_{x_{ij}}(u_j)_{x_{ij}}}{x_{ij}!}g(x_{ij})
  \right]^{\ell_{ij}}.
  \label{eq:endpoint-reference-weight-v3}
\end{equation}
Let
\[
  \mathcal V:=
  \{v=(v_i)_{i=0}^3\in\mathbb N_0^4:0\le v_i\le\kappa_i
    \text{ for every }i\}.
\]
For $v\in\mathcal V$, define
\begin{equation}
  D(v)=
  \frac{\prod_i(\kappa_i)_{v_i}^2}{\prod_i v_i!}
  \frac{\prod_i[u_i!g(u_i)]^{v_i}}{(n)_{m(v)}},
  \qquad
  m(v)=\sum_i u_iv_i.
  \label{eq:one-sided-diagonal-weight-v3}
\end{equation}
For comparison with Section~7, denote its common-subprofile vector by
$\ell^{\mathrm{pd}}=(\ell_\nu^{\mathrm{pd}})_{\nu=2}^{5}$. Under the bijection
$\ell_{i+2}^{\mathrm{pd}}=v_i$, the multiplicity and block size are
$k_{i+2}=\kappa_i$ and $\alpha-(i+2)=u_i$; hence (7.3) gives
$D(\ell^{\mathrm{pd}})=D(v)$, using
$g(u_i)=2^{\binom{u_i}{2}-1}$, valid here because $\alpha>8$.
Below, $D(r)$ and $D(c)$ evaluate
\eqref{eq:one-sided-diagonal-weight-v3} on the margins in
$0\le v_i\le\kappa_i$.

\begin{samepage}
For the endpoint pair $(i,j)$, put
\[
  s_{ij}:=\min\{u_i,u_j\},
  \qquad
  t_{ij}:=\max\{u_i,u_j\},
  \qquad
  d_{ij}:=t_{ij}-s_{ij},
\]
and define
\[
  Q_{ij}:=
  (n+1)^{d_{ij}/2}
  \frac{\sqrt{(t_{ij})_{d_{ij}}}}{d_{ij}!}
  2^{-\{d_{ij}s_{ij}+\binom{d_{ij}}2\}/2}.
\]
Thus $Q_{ii}=1$. Since $t_{ij}\le a$, $s_{ij}\ge a-3$, and
$d_{ij}\le3$,
\begin{equation}
  Q_{ij}\le\frac{(\tau_n^{\mathrm{end}})^{d_{ij}}}{d_{ij}!},
  \qquad
  \tau_n^{\mathrm{end}}=
  2^{3/2}\sqrt{\frac{(n+1)a}{2^a}}
  =O\!\left(\frac{(\log n)^{3/2}}{\sqrt n}\right).
  \label{eq:endpoint-eta-v3}
\end{equation}
\end{samepage}

Let
\[
  A_L=\frac{\prod_i r_i!}{\prod_{i,j}\ell_{ij}!},
  \qquad
  C_L=\frac{\prod_j c_j!}{\prod_{i,j}\ell_{ij}!},
  \qquad
  Q^L=\prod_{i,j}Q_{ij}^{\ell_{ij}}.
\]

\begin{lemma}[Endpoint-table product comparison]
Assume $\alpha>8$ and that $(\kappa_i,u_i)_{i=0}^3$ is the fixed feasible
profile, so $\sum_{i=0}^3\kappa_i u_i=n$. Then every feasible endpoint table $L$
satisfies
\begin{equation}
  W(L)^2
  \le
  \bigl(D(r)A_LQ^L\bigr)\bigl(D(c)C_LQ^L\bigr).
  \label{eq:endpoint-table-product-comparison-v3}
\end{equation}
\end{lemma}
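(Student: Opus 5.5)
The plan is a direct term-by-term comparison: both sides of \eqref{eq:endpoint-table-product-comparison-v3} are explicit products, and I expect everything except the ambient falling factorials to cancel exactly, cell type by cell type. First I would expand the right-hand side. By \eqref{eq:one-sided-diagonal-weight-v3} and the definition of $A_L$, the factor $\prod_i r_i!$ in $A_L$ cancels the denominator $\prod_i r_i!$ of $D(r)$. This gives
\[
  D(r)A_L=\frac{\prod_i(\kappa_i)_{r_i}^2}{\prod_{i,j}\ell_{ij}!}\,
  \frac{\prod_{i,j}[u_i!\,g(u_i)]^{\ell_{ij}}}{(n)_{m(r)}},
\]
using $r_i=\sum_j\ell_{ij}$ to distribute the class-size factor over cells. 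Symmetrically, $D(c)C_L$ is the same expression with $(\kappa_j)_{c_j}^2$, $u_j$, and $(n)_{m(c)}$. Multiplying the two, the slot-allocation prefactor becomes
\[
  \Bigl[\prod_i(\kappa_i)_{r_i}\prod_j(\kappa_j)_{c_j}\Bigr/\prod_{i,j}\ell_{ij}!\Bigr]^2,
\]
which is exactly the square of the prefactor of $W(L)$ in \eqref{eq:endpoint-reference-weight-v3}. So it suffices to compare the cell factors together with the ambient denominators.

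Second, I would isolate the ambient loss. Since $m(r)=\sum_{i,j}\ell_{ij}u_i$, $m(c)=\sum_{i,j}\ell_{ij}u_j$ and $x_{ij}=\min\{u_i,u_j\}$, we have $m(r)=J(L)+a$ and $m(c)=J(L)+b$ with $a,b\ge0$ and
\[
  a+b=\sum_{i,j}\ell_{ij}d_{ij}.
\]
Both $m(r)$ and $m(c)$ are at most $n$, because $r_i,c_j\le\kappa_i,\kappa_j$ and $\sum_i\kappa_iu_i=n$. As in \eqref{eq:one-global-falling-factorial-v3}, this gives
\[
  \frac{(n)_{m(r)}(n)_{m(c)}}{(n)_{J}^2}=(n-J)_a(n-J)_b\le(n+1)^{a+b}
  =\prod_{i,j}\bigl((n+1)^{d_{ij}}\bigr)^{\ell_{ij}},
\]
which is precisely the $(n+1)^{d_{ij}/2}$ factor of $Q_{ij}$, squared and raised to $\ell_{ij}$.

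Third, the remaining per-cell comparison should be an exact identity. For a cell with $s=s_{ij}$, $t=t_{ij}$, $d=t-s$, the contribution to $W(L)^2$ is
\[
  \Bigl[\frac{(s)_s(t)_s}{s!}\,g(s)\Bigr]^2=\Bigl(\frac{t!}{d!}\Bigr)^2g(s)^2 .
\]
The contribution to the right side, after removing the $(n+1)^d$ factor, is
\[
  s!\,t!\,g(s)\,g(t)\,\frac{(t)_d}{(d!)^2}\,2^{-(ds+\binom d2)}
  =\frac{(t!)^2}{(d!)^2}\,g(s)\,g(t)\,2^{-(ds+\binom d2)},
\]
using $(t)_d=t!/s!$. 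Since $\alpha>8$ forces $s,t\ge3$, the rule $g(x)=2^{\binom x2-1}$ applies to both, and $\binom t2-\binom s2=ds+\binom d2$ gives $g(t)=g(s)\,2^{ds+\binom d2}$. So the two cell factors agree exactly, including the diagonal case $d=0$, where $Q_{ii}=1$. Multiplying over cells with exponents $\ell_{ij}$ and combining with the falling-factorial inequality yields \eqref{eq:endpoint-table-product-comparison-v3}.

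The main obstacle is bookkeeping rather than estimation. One must make sure $D(r)$ and $D(c)$ are evaluated on the true margins, so that the class-size factors redistribute correctly as $[u_i!\,g(u_i)]^{\ell_{ij}}$. One must also check that the only inequality used is the single ambient bound, applied once after all cells are aggregated. I would verify the per-cell identity first on the diagonal case $i=j$, then on one off-diagonal pair, before assembling the product.
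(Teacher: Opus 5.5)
Your proposal is correct and follows essentially the same route as the paper. Both arguments cancel the slot-allocation and table factorials exactly, and both use the exact per-cell identity $\bigl[\tfrac{t!}{d!}g(s)\bigr]^2=s!\,t!\,g(s)g(t)\,Q_{ij}^2/(n+1)^{d}$, which follows from $g(t)=g(s)2^{ds+\binom d2}$. The only inequality in either is the single ambient bound $(n)_{m(r)}(n)_{m(c)}\le(n+1)^{\sum d_{ij}\ell_{ij}}(n)_{J(L)}^2$; your explicit check that $m(r),m(c)\le n$, so that every falling factorial is positive, is a small detail the paper leaves implicit.
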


\begin{proof}
The profile factors and the table factorials cancel exactly in the quotient of
$W(L)^2$ by $(D(r)A_L)(D(c)C_L)$. It remains to compare the ambient falling
factorials and the local full-containment terms.

Fix one endpoint pair and abbreviate $s=s_{ij}$, $t=t_{ij}$,
$d=d_{ij}=t-s$, and $x_{ij}=s$. Its local atom in
\eqref{eq:endpoint-reference-weight-v3} is
\[
  \frac{(s)_s(t)_s}{s!}g(s)
  =
  \frac{t!}{d!}g(s).
\]
Consequently
\[
  \frac{
    \left[\frac{(s)_s(t)_s}{s!}g(s)\right]^2
  }{s!g(s)\,t!g(t)}
  =
  \frac{(t)_d}{(d!)^2}
  2^{-ds-\binom d2}
  =
  \frac{Q_{ij}^2}{(n+1)^d}.
\]
Under the hypothesis, $s\ge\alpha-5>3$, so in particular $s\ge2$. Hence we
may use $g(t)/g(s)=2^{ds+\binom d2}$; the identity is also valid for $d=0$.

Moreover, $m(r),m(c)\ge J(L)$ and
\[
  m(r)+m(c)-2J(L)
  =
  \sum_{i,j}d_{ij}\ell_{ij}.
\]
Therefore
\[
  \frac{(n)_{m(r)}(n)_{m(c)}}{(n)_{J(L)}^2}
  \le
  (n+1)^{m(r)+m(c)-2J(L)}.
\]
Combining this inequality with the local ratio cell by cell gives
\[
\begin{aligned}
 \frac{W(L)^2}{(D(r)A_L)(D(c)C_L)}
 &\le
 (n+1)^{m(r)+m(c)-2J(L)}
 \prod_{i,j}
 \left(\frac{Q_{ij}^2}{(n+1)^{d_{ij}}}\right)^{\ell_{ij}}\\
 &=\prod_{i,j}Q_{ij}^{2\ell_{ij}}
  =(Q^L)^2,
\end{aligned}
\]
where the equality uses
\(m(r)+m(c)-2J(L)=\sum_{i,j}d_{ij}\ell_{ij}\). This proves
\eqref{eq:endpoint-table-product-comparison-v3}.
\end{proof}

\begin{proposition}[Endpoint-table sum]
\label{prop:endpoint-table-sum-v3}
There is an absolute constant $C_{\mathrm{end}}>0$ such that, for all
sufficiently large $n$, uniformly in the phase,
\begin{equation}
  \sum_{L\ \mathrm{feasible}}W(L)
  \le
  \exp\!\left\{C_{\mathrm{end}}\tau_n^{\mathrm{end}}K\right\}
  \sum_{v\in\mathcal V}D(v)
  \le
  \exp\!\left\{C_{\mathrm{end}}\sqrt{n\log n}\right\}.
  \label{eq:endpoint-table-sum-v3}
\end{equation}
All sums in this proposition are finite.
\end{proposition}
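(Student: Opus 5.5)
The plan is to apply the endpoint-table product comparison \eqref{eq:endpoint-table-product-comparison-v3} and then Cauchy--Schwarz over the tables. Taking square roots gives
\[
  W(L)\le Q^L\sqrt{D(r)A_L\,D(c)C_L}
  \le \tfrac12 Q^L\bigl(D(r)A_L+D(c)C_L\bigr)
\]
by AM--GM. By the row/column symmetry it then suffices to bound $\sum_L D(r(L))A_LQ^L$. I will reorganize this sum by the row margin $r=v\in\mathcal V$. For a fixed $v$, the inner sum runs over tables $L$ with row sums $r_i=v_i$; dropping the column-feasibility constraint only enlarges it, since all terms are nonnegative. That inner sum factorizes row by row by the multinomial theorem:
\[
  \sum_{L:\,r(L)=v}A_LQ^L
  \le\prod_{i=0}^3\Bigl(\sum_{j=0}^3Q_{ij}\Bigr)^{v_i}.
\]

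The next step bounds the row sums of $Q$. Because $Q_{ii}=1$ and, by \eqref{eq:endpoint-eta-v3}, $Q_{ij}\le(\tau_n^{\mathrm{end}})^{d_{ij}}/d_{ij}!$ with $d_{ij}=|i-j|\ge1$ off the diagonal, each row sum is at most $1+3\tau_n^{\mathrm{end}}$ once $\tau_n^{\mathrm{end}}\le1$. Since $\sum_iv_i\le K$, the inner sum is at most $\exp\{3\tau_n^{\mathrm{end}}K\}$ uniformly in $v$. The same bound holds for the column version. This yields the first inequality of \eqref{eq:endpoint-table-sum-v3} with $C_{\mathrm{end}}=3$. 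Finiteness is clear throughout, because every $v_i\le\kappa_i$ and the table set is finite.

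For the second inequality, I identify $D(v)$ with the Section~7 weight $D(\ell^{\mathrm{pd}})$ through the stated bijection. Lemma~\ref{lemma-7.1-all-common-subprofiles} then gives $\sum_{v\in\mathcal V}D(v)=1+o(1)\le2$ uniformly in the phase. It remains to estimate $\tau_n^{\mathrm{end}}K$. We have $K=k_{\mathrm{co}}=\Theta(n/\log n)$ and $\tau_n^{\mathrm{end}}=O((\log n)^{3/2}/\sqrt n)$. The latter uses $2^{a}=2^{\alpha-2}=\Theta(n^2/(\log n)^2)$, which comes from the exact phase representation of $2^\alpha$ in Section~2 with constants bounded uniformly in $b\in(0,1]$. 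Hence $\tau_n^{\mathrm{end}}K=O(\sqrt{n\log n})$. Absorbing the factor $2$ by enlarging the constant gives the second inequality.

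The main point needing care is the check of the value $\tau_n^{\mathrm{end}}$ in \eqref{eq:endpoint-eta-v3}. The paper asserts this bound, so I would simply reuse it: $(t)_d\le a^d$ and $2^{-ds}\le 2^{-d(a-3)}$ give a per-unit-$d$ factor of order $\sqrt{(n+1)a\,2^{3-a}}$. I would also confirm that Lemma~\ref{lemma-7.1-all-common-subprofiles} applies verbatim, because its weights coincide with $D(v)$ when $\alpha>8$. All thresholds involved are phase-independent.
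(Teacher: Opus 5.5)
Your proposal is correct and follows the paper's proof step for step: AM--GM applied to the endpoint-table product comparison, the multinomial expansion over each fixed row (respectively column) margin using $\sum_j Q_{ij}\le 1+3\tau_n^{\mathrm{end}}$ and $\sum_i v_i\le K$, then Lemma~\ref{lemma-7.1-all-common-subprofiles} and $\tau_n^{\mathrm{end}}K=O(\sqrt{n\log n})$. One caveat applies equally to the paper: $\tau_n^{\mathrm{end}}K$ is asymptotically a phase-dependent multiple of $\sqrt{n\log n}$, roughly between $2.5$ and $3.5$, and $\sum_{v}D(v)\ge D(0)=1$, so no single constant can serve in both exponents; your step of ``enlarging the constant'' works only if the two occurrences of $C_{\mathrm{end}}$ are read as separate absolute constants (for example $3$ in the first and a larger one in the second).
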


\begin{proof}
For all sufficiently large $n$, one has $\tau_n^{\mathrm{end}}\le1$ by
\eqref{eq:endpoint-eta-v3}; hence $Q_{ij}\le\tau_n^{\mathrm{end}}$ whenever
$i\ne j$.
By the arithmetic--geometric mean inequality and
\eqref{eq:endpoint-table-product-comparison-v3},
\[
  2W(L)
  \le
  D(r)A_LQ^L+D(c)C_LQ^L.
\]
Consider the first term. For a fixed row margin $r$, feasible tables form a
subset of all nonnegative integer tables with row margin $r$. Since
$A_LQ^L\ge0$, the multinomial theorem gives
\[
  \sum_{\substack{L\ \mathrm{feasible}\\
                  \operatorname{row}(L)=r}}A_LQ^L
  \le
  \sum_{\substack{L\in\mathbb N_0^{4\times4}\\
                  \operatorname{row}(L)=r}}A_LQ^L
  =
  \prod_i\left(\sum_jQ_{ij}\right)^{r_i}
  \le
  (1+3\tau_n^{\mathrm{end}})^K.
\]
After multiplication by $D(r)$ and summation over $r$, the row-margin term is at
most $(1+3\tau_n^{\mathrm{end}})^K\sum_{v\in\mathcal V}D(v)$, where $v$ is now a dummy
subprofile variable. The column-margin term is identical after exchanging rows and
columns. Section~7 proves
$\sum_{v\in\mathcal V}D(v)=1+o(1)$ uniformly in the phase.
Thus the reference sum still contains every common-class overlap. Finally,
$K=\Theta(n/\log n)$ and
\eqref{eq:endpoint-eta-v3} imply
$\tau_n^{\mathrm{end}}K=O(\sqrt{n\log n})$.
\end{proof}

\subsection{Insertion of the phase estimates}

The phase satisfies
\[
  \alpha
  =
  \left(\frac{2}{\log 2}+o(1)\right)\log n.
\]
In particular, for all sufficiently large $n$,
\begin{equation}
  \frac52\log n\le\alpha.
  \label{eq:coarse-phase-corridor-v3}
\end{equation}
Every endpoint minimum satisfies $m_{ij}\ge\alpha-5$, and elementary floor
arithmetic gives
\[
  3\alpha-19
  \le
  4\left\lfloor\frac{3m_{ij}-1}{4}\right\rfloor.
\]
Using $\log 2>2/3$ and \eqref{eq:coarse-phase-corridor-v3}, we obtain
\begin{equation}
  \frac54\log n-\frac{19}{6}
  \le
  (\log 2)
  \left\lfloor\frac{3m_{ij}-1}{4}\right\rfloor.
  \label{eq:coarse-log-budget-v3}
\end{equation}
Hence
\[
  2^{\lfloor(3m_{ij}-1)/4\rfloor}
  \ge
  e^{-19/6}n^{5/4}.
\]
Since every $m_{ij}=O(\log n)$,
\begin{equation}
  \rho_{\Sigma}
  =O\!\left(\frac{\log n}{n^{1/4}}\right)
  \longrightarrow0.
  \label{eq:rho-sum-small-v3}
\end{equation}
Thus $\rho_{\Sigma}\le1$ for all sufficiently large $n$.

Let $\Sigma_n^{\mathrm{hi}}$ denote the high-skeleton reference sum:
\[
  \Sigma_n^{\mathrm{hi}}
  :=
  \sum_{(P,j)\ \mathrm{feasible}}w_{\mathrm{hi}}(P,j),
\]
where the sum ranges over the canonical high-cell matching supports and their
admissible high multiplicities defined above.

\begin{proposition}[High-skeleton estimate]
\label{prop:bare-high-skeleton-v3}
There is a phase-independent deterministic sequence
$\varepsilon_n^{\mathrm{hi}}\ge0$ with
$\varepsilon_n^{\mathrm{hi}}\to0$ such that, for all sufficiently large $n$,
\begin{equation}
  \Sigma_n^{\mathrm{hi}}
  \le
  \exp\!\left\{
    \varepsilon_n^{\mathrm{hi}}\frac{n}{(\log n)^4}
  \right\}.
  \label{eq:bare-skeleton-final-v3}
\end{equation}
\end{proposition}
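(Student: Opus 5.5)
The plan is to chain the finite reductions of this section with the two asymptotic inputs, Proposition~\ref{prop:endpoint-table-sum-v3} and \eqref{eq:rho-sum-small-v3}, and then to read off $\varepsilon_n^{\mathrm{hi}}$ explicitly.

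\textbf{Reduction to the endpoint-table sum.} First I organize $\Sigma_n^{\mathrm{hi}}$ by support. For a fixed block matching $P$, the admissible multiplicities $R_0<j_e\le m_e$ correspond bijectively to deficit vectors $h=\mathbf m-j$ with $h_e\ge0$ and $m_e-h_e>R_0$. This is exactly the index range in Proposition~\ref{prop:fixed-support-all-deficit-v3}, and the condition $j_e>R_0\ge U/2$ gives $2h_e<m_e$, as that proposition requires. I will take $n$ past the common threshold at which $\alpha>8$ and $\rho_\Sigma\le1$; the latter holds eventually by \eqref{eq:rho-sum-small-v3}. For every $P$, Proposition~\ref{prop:fixed-support-all-deficit-v3} together with $|P|\le K$ from \eqref{eq:support-cardinality-v3} then bounds the $P$-slice by $w_{\mathrm{full}}(P)\bigl(1+(\alpha+1)\rho_\Sigma\bigr)^K$. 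I then enlarge the sum from canonical high supports to all block matchings, which is legitimate because all terms are nonnegative; the empty matching is included, with weight $1$. Regrouping by \eqref{eq:reference-grouping-v3} gives \eqref{eq:coarse-realized-table-reduction-v3}.

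\textbf{Insertion of the asymptotic estimates.} Next I apply Proposition~\ref{prop:endpoint-table-sum-v3} to the first factor, and $1+x\le e^x$ to the second, to obtain
\[
  \Sigma_n^{\mathrm{hi}}
  \le
  \exp\Bigl\{C_{\mathrm{end}}\sqrt{n\log n}+K(\alpha+1)\rho_\Sigma\Bigr\}.
\]
The scales are then inserted with phase-independent constants:
\begin{itemize}
\item $K=k_{\mathrm{co}}\le C_Kn/\log n$;
\item $\alpha+1=O(\log n)$;
\item $\rho_\Sigma=O(\log n/n^{1/4})$, by \eqref{eq:rho-sum-small-v3}.
\end{itemize}
Together these give $K(\alpha+1)\rho_\Sigma=O(n^{3/4}\log n)$.

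\textbf{Definition of the error sequence.} I set
\[
  \varepsilon_n^{\mathrm{hi}}
  :=
  \frac{(\log n)^4}{n}
  \Bigl(C_{\mathrm{end}}\sqrt{n\log n}+C_K\frac{n}{\log n}(\alpha+1)\rho_\Sigma\Bigr)
\]
for large $n$, and $\varepsilon_n^{\mathrm{hi}}:=0$ below the threshold. This sequence is deterministic and nonnegative. Both contributions are polynomially smaller than $n/(\log n)^4$, namely $O((\log n)^{9/2}n^{-1/2})$ and $O((\log n)^5n^{-1/4})$ after normalization, so $\varepsilon_n^{\mathrm{hi}}\to0$.

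\textbf{Phase uniformity.} Uniformity in the phase holds because every constant used is phase-independent. This covers $C_{\mathrm{end}}$, $C_K$, the $O$-constant in \eqref{eq:rho-sum-small-v3} (which rests only on $m_{ij}\ge\alpha-5$ and \eqref{eq:coarse-phase-corridor-v3}), and the thresholds.

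\textbf{Main obstacle.} The genuinely delicate input is already encapsulated in Proposition~\ref{prop:endpoint-table-sum-v3}, since it relies on the uniform diagonal mass $1+o(1)$ from Lemma~\ref{lemma-7.1-all-common-subprofiles}. Within this proof, the only point needing care is the bookkeeping in the first step. One must check that every feasible high skeleton is counted by exactly one $(P,h)$ in Proposition~\ref{prop:fixed-support-all-deficit-v3}, and that the enlargement to all block matchings is compatible with the definition of $W(L)$. This is a direct verification, so no step should fail.
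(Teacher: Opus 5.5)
Your proposal is correct and matches the paper's proof: you re-derive \eqref{eq:coarse-realized-table-reduction-v3} from Proposition~\ref{prop:fixed-support-all-deficit-v3}, $|P|\le K$, and the endpoint-table regrouping, and then bound the logarithm by $C_{\mathrm{end}}\sqrt{n\log n}+K(\alpha+1)\rho_\Sigma=O(n^{3/4}\log n)=o(n/(\log n)^4)$. There is one small slip: $R_0=\lfloor U/2\rfloor\le U/2$, so the justification of $2h_e<m_e$ should read $j_e\ge R_0+1>U/2$ (because $2(R_0+1)>U$), combined with $m_e\le U$.
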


\begin{proof}
The deficit factor in \eqref{eq:coarse-realized-table-reduction-v3} satisfies
\[
  \log\left(1+(\alpha+1)\rho_{\Sigma}\right)^K
  \le
  K(\alpha+1)\rho_{\Sigma}
  =O(n^{3/4}\log n).
\]
Proposition~\ref{prop:endpoint-table-sum-v3} contributes
$O(\sqrt{n\log n})$ to the logarithm. Both quantities are
$o(n/(\log n)^4)$. Combining these estimates with
\eqref{eq:coarse-realized-table-reduction-v3} proves the proposition.
\end{proof}

\section{Residual attachments by matching restriction}
\label{sec:residual-attachments-v3}

Fix a feasible high skeleton. We estimate its unexposed cell rewards together
with the residual binary cycle-space factor, applying the estimate termwise to
the exact overlap sum. Every overlap matrix remains in the sum.

We write $M$ for the canonical matching denoted by $P$ in Section~8.
All endpoint-type notation is inherited unchanged from Section~8.

Throughout this section, write $Z:=Z_{\mathbf k}^{\mathrm{sgn}}$ for the
selected signed-profile witness count.

\subsection{Conditional decomposition}

Fix a canonical high skeleton with exposed block matching $M$, exposed
multiplicities $j$, and total exposed mass $J$. Put
\[
  m_0:=n-J.
\]
Here feasibility means that \(M\) is a block matching and
\(R_0<j_e\le\min\{s_e,t_e\}\) for every \(e\in M\). Each
\(S\in\mathfrak F(M,j)\) is a physical partial matching containing exactly
$J$ pairs.
Let $(d_a)_a$ and $(d'_b)_b$ be the residual row and column degrees. Their two
sums equal $m_0$, and every degree is at most the phase cap $U$.
Explicitly,
\[
 d_a=s_a-\sum_{b:(a,b)\in M}j_{ab},\qquad
 d'_b=t_b-\sum_{a:(a,b)\in M}j_{ab}.
\]

The unexposed pairs form a uniform bipartite configuration matching with these
residual degrees; when $m_0=0$, this means the unique empty matching. Write
$\mathbb P_{\mathrm{res}}$ and $\mathbb E_{\mathrm{res}}$ for its law and
expectation. Let
$r'_{ab}$ be its cell counts and let $H_{\mathrm{res}}$ be the simple support
graph of cells with $r'_{ab}\ge2$. The residual event $\mathcal E(M,j)$ imposes
the cap $r'_{ab}\le\lfloor U/2\rfloor$ and forbids any further pair in a cell
of $M$. The latter no-return condition makes the canonical exposure unique.
Thus, with $R_0=\lfloor U/2\rfloor$,
\[
 \mathcal E(M,j)=
 \{r'_{ab}=0\ ((a,b)\in M),\quad
   r'_{ab}\le R_0\ ((a,b)\notin M)\}.
\]
More explicitly, let $\Pi$ be the uniform full configuration matching. For
every $S\in\mathfrak F(M,j)$ and every event $\mathcal B$ of the residual
matching,
\begin{equation}
  \mathbb P\{S\subseteq\Pi,\ \Pi\setminus S\in\mathcal B\}
  =
  \frac{1}{(n)_J}\,\mathbb P_{\mathrm{res}}(\mathcal B).
  \label{eq:exposed-residual-factorization-v3}
\end{equation}
After the labeled realizations are summed, the residual attachment depends on the
block matching $M$ and the residual degree lists, but not on which labeled
stubs realize the exposed pairs.  Indeed, blockwise bijections between the
unused row stubs, and separately between the unused column stubs, conjugate
the two residual matching spaces while preserving all cell counts. Define
\begin{equation}
  \mathcal A(M,j)
  :=
  \mathbb E_{\mathrm{res}}\!\left[
    \left(\prod_{a,b}g(r'_{ab})\right)
    2^{\beta(M\cup H_{\mathrm{res}})}
    \mathbf 1_{\mathcal E(M,j)}
  \right].
  \label{eq:residual-attachment-v3}
\end{equation}
Only the residual configuration matching is random in this expectation.

The factor $w_{\mathrm{hi}}(M,j)$ aggregates the partial matchings in
$\mathfrak F(M,j)$.  The indicator $\mathbf 1_{\mathcal E(M,j)}$ supplies the
cap and no-return conditions that make this exposure canonical.

\begin{proposition}[Canonical exact overlap decomposition]
\label{prop:canonical-exact-overlap-decomposition-v3}
Let $n$ be finite and let the fixed signed profile be feasible, with phase cap
$U\ge2$. Then
\begin{equation}
  \frac{\mathbb E Z^2}{(\mathbb E Z)^2}
  =
  \sum_{(M,j)}w_{\mathrm{hi}}(M,j)\,\mathcal A(M,j),
  \label{eq:exact-attachment-decomposition-v3}
\end{equation}
where the sum ranges over the finite family of feasible canonical high
skeletons.
\end{proposition}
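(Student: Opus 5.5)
The plan is to start from the exact identity (6.6) of Lemma~\ref{lemma-6.1-exact-sign-sum}, which writes the normalized second moment as $\sum_r p(r)A_\zeta(r)$ with $A_\zeta(r)=\bigl(\prod g(r_{ab})\bigr)2^{\beta(H(r))}$. We rewrite this sum as an expectation over the uniform configuration matching $\Pi$ of the labeled stubs, so that $r$ becomes the cell-count table of $\Pi$. For each realization $\Pi$ we define its canonical high skeleton $(M,j)$. Here $M=\{(a,b):r_{ab}>R_0\}$, which is a block matching by the argument opening Section~8, and $j_e=r_e$ for $e\in M$. We also define the exposed pair set $S:=\Pi\cap(\bigcup_{e\in M}e)$, the set of all pairs of $\Pi$ lying in cells of $M$; this belongs to $\mathfrak F(M,j)$. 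The residual is $\Pi\setminus S$.

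The first step is a bijection. The map $\Pi\mapsto(M,j,S,\Pi\setminus S)$ is injective. Its image is exactly the set of quadruples in which $(M,j)$ is a feasible high skeleton, $S\in\mathfrak F(M,j)$, and the residual matching on the residual degrees lies in $\mathcal E(M,j)$. The cap $r'\le R_0$ off $M$ guarantees that no further cell becomes high. The no-return condition $r'|_M=0$ guarantees that the multiplicities on $M$ are exactly $j$ and that $S$ is recovered as all pairs in $M$. Conversely, $S\cup\Pi_{\mathrm{res}}$ is a full matching whose canonical skeleton is $(M,j)$. This gives the disjoint decomposition $1=\sum_{(M,j)}\sum_{S}\mathbf 1\{\dots\}$ of the configuration space.

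The second step factorizes the weight on each piece. Since $r=j_M+r'$ with $r'$ vanishing on $M$, we have $\prod g(r_{ab})=\prod_{e\in M}g(j_e)\prod_{a,b}g(r'_{ab})$, using $g(0)=1$. The support graph also splits: $H(r)=M\cup H_{\mathrm{res}}$, because every cell of $M$ has size $>R_0\ge1$, hence $\ge2$ once $U\ge2$. Off $M$ the supports agree. Therefore $2^{\beta(H(r))}=2^{\beta(M\cup H_{\mathrm{res}})}$.

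The third step applies the factorization (\ref{eq:exposed-residual-factorization-v3}) to each fixed $S$. The contribution of $S$ becomes $(n)_J^{-1}\prod_{e}g(j_e)\,\mathcal A(M,j)$. The residual law depends only on the residual degrees, by the stub-conjugation remark, so $\mathcal A$ does not depend on $S$. Summing over $S\in\mathfrak F(M,j)$ and using Proposition~\ref{prop:aggregate-high-weight-v3} produces $w_{\mathrm{hi}}(M,j)\mathcal A(M,j)$. Summing over $(M,j)$ then gives (\ref{eq:exact-attachment-decomposition-v3}).

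The main obstacle is the bijection step, in particular showing that the cap and no-return conditions characterize the image exactly, so that every overlap is counted once. One point to check is that the residual degrees are nonnegative and sum to $m_0$. Another is the degenerate case $m_0=0$. A third is that slot-labeling, which introduces the factor $\prod k_i!$, acts identically on both sides of the identity.
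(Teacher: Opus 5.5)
Your proposal is correct and follows the paper's proof essentially step for step. You show that the canonical map $\Pi\mapsto\bigl(M(\Pi),j(\Pi),S(\Pi),\Pi_{\mathrm{res}}\bigr)$ is a bijection onto feasible skeletons, their labeled realizations, and residual matchings in $\mathcal E(M,j)$, and then use the exposed--residual factorization, Proposition~\ref{prop:aggregate-high-weight-v3}, and the splitting $H(r)=M\cup H_{\mathrm{res}}$ (valid since $j_e>R_0\ge1$) to produce $w_{\mathrm{hi}}(M,j)\,\mathcal A(M,j)$; the paper does the same, with the factorization of the local rewards and the support graph left more implicit than in your write-up.
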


\begin{proof}
Let $\Pi$ be a full bipartite configuration matching and let $r_{ab}(\Pi)$ be
its cell counts. Define
\[
  M(\Pi):=\{(a,b):r_{ab}(\Pi)>R_0\},
  \qquad
  j_{ab}(\Pi):=r_{ab}(\Pi)\quad((a,b)\in M(\Pi)).
\]
Since $2(R_0+1)>U$, the degree cap makes $M(\Pi)$ a matching. Moreover,
$j_{ab}>\lfloor U/2\rfloor$ and $U\ge2$ imply $j_{ab}\ge2$, so every cell of
$M(\Pi)$ is an edge of the signed support graph from Section~6. Let $S(\Pi)$
be the set of all stub pairs of $\Pi$ lying in cells of $M(\Pi)$, and put
$\Pi_{\mathrm{res}}:=\Pi\setminus S(\Pi)$. Then
\[
  \Pi\longmapsto
  \bigl(M(\Pi),j(\Pi),S(\Pi),\Pi_{\mathrm{res}}\bigr)
\]
takes values in the feasible skeletons, their labeled realizations, and residual
matchings satisfying $\mathcal E(M,j)$. It is injective because
$\Pi=S(\Pi)\cup\Pi_{\mathrm{res}}$.

Conversely, take a feasible $(M,j)$, an $S\in\mathfrak F(M,j)$, and a perfect
matching of the unused stubs satisfying $\mathcal E(M,j)$. Their disjoint union
is a full matching. The no-return condition leaves multiplicity $j_e>R_0$ in
each $e\in M$, and the residual cap leaves every cell outside $M$ with
multiplicity at most $R_0$. Thus its canonical high-cell data are exactly
$(M,j)$, and this construction is the inverse of the displayed map.

For each fixed $S$, the exposed-residual factorization
\eqref{eq:exposed-residual-factorization-v3} contributes $(n)_J^{-1}$ and the
residual configuration law. Summing over $S\in\mathfrak F(M,j)$ and including
the exposed local rewards gives $w_{\mathrm{hi}}(M,j)$ by
Proposition~\ref{prop:aggregate-high-weight-v3}. By
Lemma~\ref{lemma-6.1-exact-sign-sum}, the unexposed local rewards and the
remaining sign factor are exactly the integrand defining
$\mathcal A(M,j)$, with support factor
$2^{\beta(M\cup H_{\mathrm{res}})}$. The bijection therefore yields
\eqref{eq:exact-attachment-decomposition-v3} and counts every full matching
once.
\end{proof}

The case $m_0=0$ is complete without introducing cell activities. The residual
matching is empty, every residual local factor is one, and
$H_{\mathrm{res}}=\varnothing$. Since $M$ is a matching,
$\beta(M)=0$, so
\begin{equation}
  \mathcal A(M,j)=1
  \qquad(m_0=0).
  \label{eq:zero-residual-attachment-v3}
\end{equation}
In all subsequent subsections we assume $m_0>0$. In view of
Proposition~\ref{prop:bare-high-skeleton-v3}, it remains to bound
$\mathcal A(M,j)$ uniformly in the positive-residual case.

\subsection{Threshold expansion and cell activities}

For every row--column pair $(a,b)$, put
\begin{equation}
  \theta_{ab}:=
  \frac{\mathrm e\,d_ad'_b}{m_0}.
  \label{eq:theta-v3}
\end{equation}
The intensity is defined on cells of $M$ as well as outside $M$; only the
activities will be suppressed on $M$. Let
\[
  \Delta_x:=g(x)-g(x-1),
  \qquad
  R:=\left\lfloor\frac U2\right\rfloor,
\]
so $\Delta_x\ge0$: indeed $\Delta_3=3$, while
$g(x)/g(x-1)=2^{x-1}>1$ for $x\ge4$.
For $(a,b)\notin M$, define
\begin{equation}
  \lambda_{ab}
  :=
  \sum_{x=3}^{R}\Delta_x\frac{\theta_{ab}^x}{x!},
  \qquad
  q_{ab}
  :=
  \frac{\theta_{ab}^2}{2}+\lambda_{ab}.
  \label{eq:lambda-q-v3}
\end{equation}
On $M$, set $\lambda_{ab}=q_{ab}=0$.

For a finite array of nonnegative integer demands $x=(x_{ab})$ supported
outside $M$, apply Lemma~\ref{lemma-6.2-joint-prescribed-cell-bound} to the
residual degree lists and
$\operatorname{supp}(x):=\{(a,b):x_{ab}>0\}$.  It gives, in all cases,
\begin{equation}
  \mathbb P_{\mathrm{res}}(r'_{ab}\ge x_{ab}\text{ for all }a,b)
  \le
  \prod_{(a,b)\in\operatorname{supp}(x)}
    \frac{\theta_{ab}^{x_{ab}}}{x_{ab}!}.
  \label{eq:prescribed-cell-residual-v3}
\end{equation}
Because this estimate is joint, it applies simultaneously to cells sharing a
row or a column; no independence between cells is asserted.

For $0\le r\le R$,
\[
  g(r)=1+\sum_{x=3}^{R}\Delta_x\mathbf 1_{\{r\ge x\}}.
\]
Using the row- and column-slot sets defined in Section~6, put
$E_0:=(\mathcal I_{\mathrm{row}}\times\mathcal I_{\mathrm{col}})\setminus M$, and let
$\mathfrak E(M)$ be the family of even subsets of $M\cup E_0$. For an edge
$e=(a,b)$, write
$r'_e:=r'_{ab}$, $\theta_e:=\theta_{ab}$,
$\lambda_e:=\lambda_{ab}$, and $q_e:=q_{ab}$. For
$F\in\mathfrak E(M)$, define
\[
\begin{split}
  \Phi_F(r')
  :=
  &\prod_{e\in F\setminus M}
  \left(
    \mathbf 1_{\{r'_e\ge2\}}
    +
    \sum_{x=3}^{R}\Delta_x\mathbf 1_{\{r'_e\ge x\}}
  \right)\\
  &\times
  \prod_{e\in E_0\setminus F}
  \left(
    1+
    \sum_{x=3}^{R}\Delta_x\mathbf 1_{\{r'_e\ge x\}}
  \right).
\end{split}
\]
The first product contains the support threshold required when $e$ belongs to
the even set. On the event $\mathcal E(M,j)$, expansion of the cycle-space
cardinality and of all local rewards gives the exact identity
\[
  \left(\prod_{a,b}g(r'_{ab})\right)
  2^{\beta(M\cup H_{\mathrm{res}})}
  =
  \sum_{F\in\mathfrak E(M)}\Phi_F(r').
\]

Both parts of $\mathcal E(M,j)$ are used here: the cap bounds every off-$M$
multiplicity by $R$, and the no-return condition makes $r'$ vanish on $M$.
After multiplying by $\mathbf 1_{\mathcal E(M,j)}$, we remove this
indicator only to bound the expectation of the nonnegative truncated
expansion from above. No identity with the uncapped reward is used on
$\mathcal E(M,j)^c$.

\begin{lemma}[Fixed even-set expansion]
\label{lem:fixed-even-set-expansion-v3}
For every $F\in\mathfrak E(M)$,
\[
  \mathbb E_{\mathrm{res}}\!\left[
    \Phi_F(r')\mathbf 1_{\mathcal E(M,j)}
  \right]
  \le
  \prod_{e\in F\setminus M}q_e
  \prod_{e\in E_0\setminus F}(1+\lambda_e).
\]
\end{lemma}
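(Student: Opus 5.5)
We will expand $\Phi_F(r')$ into a nonnegative combination of products of threshold indicators, drop the indicator $\mathbf 1_{\mathcal E(M,j)}$, and bound the expectation of each product with the joint prescribed-cell bound \eqref{eq:prescribed-cell-residual-v3}. Since $\Phi_F(r')\ge0$ and every term in the expansion is nonnegative, removing the indicator can only increase the expectation. The exact identity is therefore not needed off $\mathcal E(M,j)$.

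\emph{Step 1: expansion.} For each $e\in F\setminus M$, write the factor as $\sum_{x\in\{2,3,\dots,R\}}c_x\mathbf 1_{\{r'_e\ge x\}}$, where $c_2=1$ and $c_x=\Delta_x$ for $x\ge3$. For each $e\in E_0\setminus F$, write the factor as $\sum_{x\in\{0,3,\dots,R\}}c_x\mathbf 1_{\{r'_e\ge x\}}$, where $c_0=1$ and the indicator for $x=0$ is identically one. Multiplying out expresses $\Phi_F(r')$ as a sum over choice functions $x=(x_e)$. Each choice function has $x_e\ge2$ on $F\setminus M$, $x_e\in\{0\}\cup\{3,\dots,R\}$ on $E_0\setminus F$, and $x_e=0$ on $M$. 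The corresponding term is $\prod_e c_{x_e}\,\mathbf 1_{\{r'_e\ge x_e\ \forall e\}}$, since a product of threshold indicators on distinct cells is the indicator of the joint event. Edges of $F\cap M$ contribute no factor. This is harmless because $\Phi_F$ contains no factor for them, and their membership in $F$ only enforces the parity constraint defining $\mathfrak E(M)$.

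\emph{Step 2: joint bound.} For a fixed choice function, the demand array $x$ is supported in $E_0$, that is, outside $M$. Estimate \eqref{eq:prescribed-cell-residual-v3}, which comes from Lemma~\ref{lemma-6.2-joint-prescribed-cell-bound} applied to the residual degree lists with $m_0>0$, gives
\[
\mathbb P_{\mathrm{res}}(r'_e\ge x_e\ \forall e)\le\prod_{e:\,x_e>0}\frac{\theta_e^{x_e}}{x_e!}.
\]
If the demands are infeasible, the event is empty and this bound holds trivially. The bound applies to cells sharing rows or columns, because it is joint and needs no independence. This is the key point of the argument. After this step the bound is a product over cells, and each cell's weight is $c_{x_e}\theta_e^{x_e}/x_e!$, with weight $1$ when $x_e=0$.

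\emph{Step 3: refactorization.} Summing over choice functions factorizes the bound cell by cell. For $e\in F\setminus M$, the cell factor is $\theta_e^2/2+\sum_{x=3}^R\Delta_x\theta_e^x/x!=q_e$. For $e\in E_0\setminus F$, it is $1+\lambda_e$. This is exactly the claimed product.

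The main obstacle is bookkeeping rather than analysis. One must check that the expansion of $\Phi_F$ is a genuine finite nonnegative sum of joint threshold indicators on distinct cells, with each cell contributing at most one demand. One must also confirm that $\Delta_x\ge0$, which holds since $\Delta_3=3$ and $g$ is increasing for $x\ge4$. This justifies dropping $\mathbf 1_{\mathcal E(M,j)}$ and applying the union-type bound \eqref{eq:prescribed-cell-residual-v3} termwise.
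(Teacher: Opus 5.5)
Your proposal is correct and follows the paper's proof essentially step for step. You expand $\Phi_F$ into nonnegative threshold monomials (the threshold-two term or one higher increment on $F\setminus M$, nothing or one increment on $E_0\setminus F$), drop $\mathbf 1_{\mathcal E(M,j)}$ by nonnegativity, apply the joint prescribed-cell bound once per monomial, and refactorize cell by cell into $q_e$ and $1+\lambda_e$; your explicit remarks on the edges of $F\cap M$ and on infeasible demands only make explicit what the paper leaves implicit.
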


\begin{proof}
Expand $\Phi_F$ into its nonnegative threshold monomials. In a cell of
$F\setminus M$, a monomial chooses either the threshold-two term or one higher
increment; it never chooses both. In a cell outside $F\cup M$, it chooses
nothing or one higher increment. Apply
\eqref{eq:prescribed-cell-residual-v3} once to the complete demand of each
monomial. The threshold-two contribution is $\theta_e^2/2$, the higher
increments sum to $\lambda_e$, and the empty choice contributes one. Every
monomial is nonnegative and $\mathbf 1_{\mathcal E(M,j)}\le1$, so omitting
the cap and no-return indicator can only increase the expectation.
\end{proof}

Summing over $F$ and then inserting the missing factors
$1+\lambda_e\ge1$ gives
\begin{equation}
  \mathcal A(M,j)
  \le
  \left(\prod_{e\in E_0}(1+\lambda_e)\right)
  \sum_{F\in\mathfrak E(M)}
    \prod_{e\in F\setminus M}q_e.
  \label{eq:lambda-even-factorization-v3}
\end{equation}

\subsection{Restriction outside the exposed matching}

The following finite lemma does not use random graphs.

\begin{lemma}[Restriction-product bound]
\label{lem:restriction-product-v3}
Let $E$ be a finite set, let $\mathfrak A$ be a finite family of subsets of
$E$, and let $I\subseteq E$. Suppose that
\[
  A\longmapsto A\setminus I
\]
is injective on $\mathfrak A$. For nonnegative activities $(q_e)_{e\in E}$,
\[
  \sum_{A\in\mathfrak A}
    \prod_{e\in A\setminus I}q_e
  \le
  \prod_{e\in E\setminus I}(1+q_e).
\]
\end{lemma}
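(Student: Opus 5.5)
The plan is to reduce the sum over $\mathfrak A$ to a sum over the family of images and then bound that by the sum over \emph{all} subsets of $E\setminus I$. First, observe that each summand depends on $A$ only through its restriction $A\setminus I$: the product $\prod_{e\in A\setminus I}q_e$ is a function of the set $B:=A\setminus I$. Second, put $\mathfrak B:=\{A\setminus I:A\in\mathfrak A\}$. Because the restriction map $A\mapsto A\setminus I$ is injective on $\mathfrak A$, it is a bijection from $\mathfrak A$ onto $\mathfrak B$, and the left-hand side equals
\[
  \sum_{B\in\mathfrak B}\prod_{e\in B}q_e .
\]
No term is counted twice; this is exactly where injectivity is used.

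Next, every $B\in\mathfrak B$ is a subset of $E\setminus I$. Since all activities are nonnegative, every product $\prod_{e\in B}q_e$ is nonnegative, so enlarging the index family from $\mathfrak B$ to the full power set of $E\setminus I$ can only increase the sum. The proof then concludes with the finite expansion
\[
  \sum_{B\subseteq E\setminus I}\prod_{e\in B}q_e
  =\prod_{e\in E\setminus I}(1+q_e),
\]
obtained by expanding the product and choosing, for each $e\in E\setminus I$, either the factor $1$ (meaning $e\notin B$) or $q_e$ (meaning $e\in B$). The empty set contributes $1$, consistent with the empty-product convention.

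There is no genuine obstacle; the argument is purely combinatorial. The only point needing care is the bookkeeping at the first step: one must check that the summand really is a function of $A\setminus I$ alone, so the injective reindexing is legitimate. Once this holds, nonnegativity makes the enlargement safe. In the intended application, $E=M\cup E_0$, $I=M$, and $\mathfrak A=\mathfrak E(M)$. There, injectivity holds because $M$ is a matching, so the part of an even set inside $M$ is determined by the parities its off-$M$ part induces at the vertices. That verification, however, belongs to the subsequent use of the lemma and not to the lemma itself.
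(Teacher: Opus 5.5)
Your proposal is correct and follows essentially the same route as the paper: injectivity makes the restrictions $A\setminus I$ a repetition-free subfamily of the power set of $E\setminus I$, nonnegativity lets you enlarge to the full power set, and expanding $\prod_{e\in E\setminus I}(1+q_e)$ finishes the proof. Your remark on the application, that each $M$-edge is determined by the parity of the off-$M$ degree at its endpoints, is equivalent to the paper's symmetric-difference argument.
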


\begin{proof}
The restrictions $A\setminus I$, for $A\in\mathfrak A$, form a subfamily of the power set of
$E\setminus I$ and occur without repetition. Enlarge the sum to the full power
set and expand the finite product.
\end{proof}

Apply the lemma with $E=M\cup E_0$, $I=M$, and
$\mathfrak A=\mathfrak E(M)$. To verify injectivity, suppose that two even sets
have the same restriction outside $M$. Their
symmetric difference is an even subset of the matching $M$. Every nonempty
subset of a matching has a vertex of degree one, so this symmetric difference
must be empty. Therefore
\begin{equation}
  \sum_{F\in\mathfrak E(M)}
    \prod_{e\in F\setminus M}q_e
  \le
  \prod_{e\in E_0}(1+q_e).
  \label{eq:even-family-product-v3}
\end{equation}

Since $0\le\lambda_e\le q_e$, equations
\eqref{eq:lambda-even-factorization-v3} and
\eqref{eq:even-family-product-v3}, together with $1+x\le e^x$, imply
\begin{equation}
  \mathcal A(M,j)
  \le
  \exp\!\left(2\sum_{e\in E_0}q_e\right).
  \label{eq:q-only-attachment-v3}
\end{equation}
Thus both the local rewards and the cycle-space cardinality are controlled by
one total residual activity.

\begin{remark}
Lemma~\ref{lem:restriction-product-v3} applies to any weighted set family whose
restriction map is injective. Here the family happens to be a binary cycle
space and injectivity follows solely from the fact that the deleted edge set is
a matching.
\end{remark}

\subsection{The intrinsic residual regime}

Assume
\begin{equation}
  2^U\le m_0^3.
  \label{eq:intrinsic-residual-regime-v3}
\end{equation}
Then $m_0\ge2^{U/3}$ and, by the degree cap,
\[
  \theta_{ab}
  \le
  \mathrm e U^2 2^{-U/3}.
\]

\begin{lemma}[Quadratic activity bound]
\label{lem:q-quadratic-v3}
There is an absolute constant $C_0$ such that, under
\eqref{eq:intrinsic-residual-regime-v3},
\begin{equation}
  q_{ab}\le C_0\theta_{ab}^2
  \label{eq:q-quadratic-v3}
\end{equation}
for every residual cell outside $M$.
\end{lemma}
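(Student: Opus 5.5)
The plan is to bound $\lambda_{ab}$ by a multiple of $\theta_{ab}^3$ and then absorb $\theta_{ab}^3$ into $\theta_{ab}^2$ using the smallness of $\theta_{ab}$ in the regime \eqref{eq:intrinsic-residual-regime-v3}. Since $q_{ab}=\theta_{ab}^2/2+\lambda_{ab}$, it suffices to show $\lambda_{ab}\le C\theta_{ab}^2$ with an absolute $C$. Write $\theta:=\theta_{ab}$.

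The first step is to control the increments $\Delta_x$. From \eqref{eq:lambda-q-v3} and the definition of $g$, one has $\Delta_x\le g(x)=2^{\binom x2-1}$ for $x\ge3$. Each term of $\lambda_{ab}$ is therefore at most $2^{\binom x2-1}\theta^x/x!$, and the task reduces to bounding $\sum_{x=3}^{R}2^{\binom x2}\theta^x/x!$.

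The next step writes each term as $(2^{(x-1)/2}\theta)^x/x!$. For $x\le R\le U/2$ this gives
\[
  2^{(x-1)/2}\theta\le 2^{U/4}\theta\le \mathrm e U^2 2^{U/4-U/3}=\mathrm e U^2 2^{-U/12}.
\]
The base is therefore bounded by an absolute constant $B_0$, uniformly in $U$, because $U^22^{-U/12}$ is bounded. The key choice is to factor out only $\theta^2$ and put the remaining $2^{\binom x2}\theta^{x-2}$ into this base. Concretely,
\[
  2^{\binom x2}\theta^x=\theta^2\cdot 2^{\binom x2}\theta^{x-2},
\]
and
\[
  2^{\binom x2}\theta^{x-2}\le \bigl(2^{x/2}\theta\bigr)^{x-2}\,2^{\binom x2-x(x-2)/2}=\bigl(2^{x/2}\theta\bigr)^{x-2}2^{x/2}.
\]
Since $2^{x/2}\theta\le\sqrt2\,B_0$, the $x$-th term is at most $\theta^2(\sqrt2 B_0)^{x-2}2^{x/2}/x!$. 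Summed over all $x\ge3$, this gives $\theta^2$ times a convergent exponential-type series, whose value is an absolute constant. Hence $\lambda_{ab}\le C_1\theta^2$, and $C_0:=1/2+C_1$ works.

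The one point needing care is the uniformity in $U$: the factor $2^{\binom x2}$ grows superexponentially, so the argument must use the restriction $x\le R=\lfloor U/2\rfloor$ together with $\theta\lesssim U^22^{-U/3}$. That is exactly where the cap $\mathcal E(M,j)$ and the intrinsic regime enter. I expect no further obstacle beyond checking the exponent arithmetic $U/4<U/3$. Small $U$, where $R<3$ and the sum is empty, is trivial.
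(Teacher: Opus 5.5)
Your argument is correct, and it follows a different route from the paper's. You bound every term at once. You write $2^{\binom x2}\theta^x=\theta^2\,(2^{x/2}\theta)^{x-2}2^{x/2}$ and use $x\le R\le U/2$ to get $2^{x/2}\theta\le 2^{U/4}\theta\le \mathrm e U^2 2^{-U/12}\le B_0$. This dominates $\lambda_{ab}/\theta^2$ by a single convergent exponential series, valid simultaneously for every $U$.

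The paper instead sets $a_x=g(x)\theta^x/x!$ and proves $(a_x)$ is log-convex, so that $\lambda_{ab}\le R(a_3+a_R)$. It then shows $Ra_3\le\theta^2$ and $Ra_R\le\theta^2$ only for sufficiently large $U$; the top endpoint uses the exponent comparison $U^2/8$ against $U^2/6$. This gives $q_{ab}\le\frac52\theta^2$ in that range. The finitely many small $U$ are handled separately by a continuity-and-compactness argument on $q_U(\theta)/\theta^2$.

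Both proofs rest on the same numerical fact: the paper's $U^2/8<U^2/6$ is your $U/4<U/3$ evaluated at $x=R$. Your termwise factorization removes the log-convexity step, the large-$U$ threshold, and the small-$U$ case split.

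The price is a huge constant, of order $\mathrm e^{2B_0}/B_0^2$ with $B_0=\mathrm e\sup_U U^2 2^{-U/12}$ in the hundreds, whereas the paper gets $5/2$ for large $U$. This is immaterial here: $C_0$ affects only the term $C_{\mathrm{att}}U^2=O((\log n)^2)$ in $\Gamma_n^{\mathrm{att}}$, and the paper's small-$U$ constant is non-explicit anyway.
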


\begin{proof}
Fix a cell and write $\theta=\theta_{ab}$. If $\theta=0$, then
$q_{ab}=0$, so the claim holds. Assume $\theta>0$. If $R<3$, the
higher-reward sum is empty, so $q_{ab}=\theta^2/2$ and the claim is immediate.
We may therefore assume $R\ge3$. Since
$0\le\Delta_x\le g(x)$, it is enough to bound
\[
  a_x:=g(x)\frac{\theta^x}{x!},
  \qquad x\ge3.
\]
For $x\ge3$,
\[
  \frac{a_{x+1}}{a_x}
  =
  \frac{2^x\theta}{x+1},
  \qquad
  \frac{a_{x+2}/a_{x+1}}{a_{x+1}/a_x}
  =
  2\frac{x+1}{x+2}>1.
\]
Thus $(a_x)$ is log-convex, and its maximum on $[3,R]$ occurs at an endpoint.
Consequently
\[
  \lambda_{ab}
  \le
  R(a_3+a_R).
\]
Now $a_3=(2/3)\theta^3$. Since
$R\theta=O(U^3 2^{-U/3})$, one has $Ra_3\le\theta^2$ for all sufficiently
large $U$.

For the other endpoint, using
$R=\lfloor U/2\rfloor$ and
$\theta\le\mathrm e U^2 2^{-U/3}$ gives
\[
  \log_2\theta
  \le
  \log_2\mathrm e+2\log_2U-\frac U3.
\]
For sufficiently large $U$, one has $R\ge3$ and $g(R)=2^{\binom R2-1}$. Hence
\[
\begin{aligned}
  \log_2\!\left(\frac{a_R}{\theta^2}\right)
  &=
  \binom R2-1+(R-2)\log_2\theta-\log_2(R!)\\
  &\le
  \left(\frac{U^2}{8}+O(U)\right)
  -\left(\frac{U^2}{6}+O(U)\right)
  +O(U\log U)\\
  &=
  -\frac{U^2}{24}+O(U\log U).
\end{aligned}
\]
The two quadratic terms come respectively from the local reward $g(R)$ and
from the factor $\theta^{R-2}$; the factorial term only improves the upper
bound. Hence $Ra_R\le\theta^2$ for all sufficiently large $U$. It follows that
$\lambda_{ab}\le2\theta^2$ in that range, and hence
$q_{ab}\le(5/2)\theta^2$.

Choose an integer $U_0$ beyond which the preceding two
endpoint bounds hold. For $1\le U<U_0$, define
\[
 q_U(\theta):=\frac{\theta^2}{2}
   +\sum_{x=3}^{\lfloor U/2\rfloor}
      \Delta_x\frac{\theta^x}{x!},
 \qquad
 \Theta_U:=\mathrm e U^2 2^{-U/3},
\]
where an empty sum is zero. The quotient $q_U(\theta)/\theta^2$ extends
continuously to $\theta=0$ with value $1/2$. Therefore
\[
 C_{\mathrm{small}}
 :=\max_{1\le U<U_0}\ \max_{0\le\theta\le\Theta_U}
     \frac{q_U(\theta)}{\theta^2}<\infty.
\]
Taking $C_0=\max\{5/2,C_{\mathrm{small}}\}$ proves the lemma for every $U$.
\end{proof}

Because $\theta_{ab}$ was defined for every pair, the cell intensities satisfy
the exact identity
\[
  \sum_{a,b}\theta_{ab}^2
  =
  \frac{\mathrm e^2}{m_0^2}
  \left(\sum_a d_a^2\right)
  \left(\sum_b(d'_b)^2\right).
\]
Since every degree is at most $U$ and both degree sums equal $m_0$,
\[
  \sum_a d_a^2\le Um_0,
  \qquad
  \sum_b(d'_b)^2\le Um_0.
\]
On $M$ the activities are zero, while outside $M$ the preceding lemma applies.
Therefore
\begin{equation}
  \sum_{e\in E_0}q_e\le C_1U^2
  \qquad\text{for one absolute }C_1>0.
  \label{eq:total-q-v3}
\end{equation}
Together with \eqref{eq:q-only-attachment-v3}, this proves
\begin{equation}
  \mathcal A(M,j)\le\exp(2C_1U^2)
  \qquad\text{if }2^U\le m_0^3.
  \label{eq:intrinsic-attachment-v3}
\end{equation}

\subsection{The complementary residual regime}

Suppose instead that $2^U>m_0^3$. Then
\begin{equation}
  m_0<2^{U/3}\le2^{\lceil U/3\rceil}.
  \label{eq:small-residual-mass-v3}
\end{equation}
We first bound the integrand pointwise. Since the integrand is nonnegative and
$0\le\mathbf 1_{\mathcal E(M,j)}\le1$, equation
\eqref{eq:residual-attachment-v3} remains bounded above after the indicator is
omitted. The residual degree cap still gives
$r'_{ab}\le\min\{d_a,d'_b\}\le U$. Consequently,
\[
  \prod_{a,b}g(r'_{ab})
  \le
  2^{\sum_{a,b}\binom{r'_{ab}}2}
  \le
  2^{(U-1)m_0/2}.
\]
Moreover, every edge of $H_{\mathrm{res}}$ uses at least two residual pairs,
so $|E(H_{\mathrm{res}})|\le m_0/2$. The restriction map from even subsets
of $M\cup H_{\mathrm{res}}$ to subsets of $E(H_{\mathrm{res}})\setminus M$ is
injective by the matching argument in
Lemma~\ref{lem:restriction-product-v3}. Hence
\[
  2^{\beta(M\cup H_{\mathrm{res}})}
  \le
  2^{|E(H_{\mathrm{res}})|}
  \le
  2^{m_0/2}.
\]
Multiplication yields
\begin{equation}
  \mathcal A(M,j)
  \le
  2^{Um_0/2}
  \qquad\text{if }2^U>m_0^3.
  \label{eq:complementary-attachment-v3}
\end{equation}

We combine the zero-residual case with the two positive-residual regimes
$2^U\le m_0^3$ and $2^U>m_0^3$.

\begin{proposition}[Uniform residual attachment]
\label{prop:uniform-residual-attachment-v3}
For the fixed feasible signed profile with $U\ge2$, there is an absolute
constant $C_{\mathrm{att}}>0$ such that every feasible canonical high skeleton
satisfies
\[
  \mathcal A(M,j)
  \le
  \begin{cases}
    1,&m_0=0,\\
    \exp(C_{\mathrm{att}}U^2),&m_0>0\text{ and }2^U\le m_0^3,\\
    2^{Um_0/2},&m_0>0\text{ and }2^U>m_0^3.
  \end{cases}
\]
\end{proposition}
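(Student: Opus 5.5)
The proposition only assembles the three cases already treated in this section, so the plan is to check that they exhaust all feasible skeletons and to fix the constant. Every feasible canonical high skeleton $(M,j)$ has either $m_0=n-J=0$ or $m_0>0$, and in the latter case exactly one of $2^U\le m_0^3$ or $2^U>m_0^3$ holds. So the three regimes partition the family of feasible skeletons, and it suffices to quote one bound per regime.

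\textbf{Case $m_0=0$.} The residual matching is empty, all residual local factors equal one, and $H_{\mathrm{res}}=\varnothing$. Since $M$ is a matching, $\beta(M)=0$. This is exactly \eqref{eq:zero-residual-attachment-v3}, which gives $\mathcal A(M,j)=1$.

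\textbf{Case $m_0>0$ and $2^U\le m_0^3$.} Here I combine three earlier results.
\begin{enumerate}
\item The exact threshold expansion and Lemma~\ref{lem:fixed-even-set-expansion-v3} give \eqref{eq:lambda-even-factorization-v3}.
\item The restriction-product bound, Lemma~\ref{lem:restriction-product-v3}, with injectivity because $M$ is a matching, yields \eqref{eq:q-only-attachment-v3}.
\item The quadratic activity bound, Lemma~\ref{lem:q-quadratic-v3}, together with the degree-cap estimate $\sum_a d_a^2,\sum_b (d'_b)^2\le Um_0$, gives \eqref{eq:total-q-v3}.
\end{enumerate}
Together these give \eqref{eq:intrinsic-attachment-v3}, so we may take $C_{\mathrm{att}}:=2C_1$. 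I will check that $C_0$ and $C_1$ are absolute, and in particular that Lemma~\ref{lem:q-quadratic-v3} covers small $U$ through the compactness constant $C_{\mathrm{small}}$. This matters because the proposition only assumes $U\ge2$ and no eventuality threshold in $n$.

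\textbf{Case $m_0>0$ and $2^U>m_0^3$.} The bound comes from the pointwise estimate: drop the indicator, bound $\prod g(r'_{ab})\le 2^{(U-1)m_0/2}$, and bound $2^{\beta}\le 2^{|E(H_{\mathrm{res}})|}\le 2^{m_0/2}$ by injectivity of restriction off $M$. This is \eqref{eq:complementary-attachment-v3}.

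The one point needing care is the first factor in the last case. It requires $\sum_{a,b}\binom{r'_{ab}}{2}\le (U-1)m_0/2$, which holds because $\binom{r}{2}\le r(U-1)/2$ for $r\le U$ and $\sum r'_{ab}=m_0$. I will also verify $g(x)\le 2^{\binom{x}{2}}$ for all $x$, including $x\le2$. No real obstacle is expected; the work is bookkeeping to make sure constants are uniform in $U\ge2$ and in the skeleton.
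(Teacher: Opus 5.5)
Your proposal is correct and is essentially the paper's own proof. The paper likewise observes that the three regimes are exhaustive, cites \eqref{eq:zero-residual-attachment-v3}, \eqref{eq:intrinsic-attachment-v3}, and \eqref{eq:complementary-attachment-v3} for the respective cases, and sets $C_{\mathrm{att}}:=2C_1$. Your extra checks ($\binom{r}{2}\le r(U-1)/2$, $g(x)\le 2^{\binom{x}{2}}$, and the small-$U$ compactness constant in Lemma~\ref{lem:q-quadratic-v3}) are exactly the details underlying those cited bounds.
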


\begin{proof}
The three cases are exhaustive.  Their bounds are
\eqref{eq:zero-residual-attachment-v3},
\eqref{eq:intrinsic-attachment-v3}, and
\eqref{eq:complementary-attachment-v3}, with
$C_{\mathrm{att}}:=2C_1$.
\end{proof}

\subsection{The global second-moment exponent}
\label{subsec:explicit-global-ledger-v3}

Combine the three deterministic partial-diagonal errors from Section~7 by
setting
\begin{equation}
  \varepsilon_n^{\mathrm{pd}}
  :=
  e^{\varepsilon_n^{\mathrm{empty}}}-1
  +\varepsilon_n^{\mathrm{central}}
  +\varepsilon_n^{\mathrm{full}}.
  \label{eq:partial-diagonal-ledger-v3}
\end{equation}
Then $\varepsilon_n^{\mathrm{pd}}\to0$ uniformly in the phase, and the disjoint
three-range assembly gives
\begin{equation}
  \sum_{v\in\mathcal V}D(v)\le1+\varepsilon_n^{\mathrm{pd}}.
  \label{eq:partial-diagonal-ledger-sum-v3}
\end{equation}
All sums in this subsection are finite and range over the domains defined in
Sections~7--9.

Let
\[
  \tau_n^{\mathrm{end}}
  :=2^{3/2}\sqrt{\frac{(n+1)a}{2^a}},
  \qquad a=\alpha-2.
\]
The notation agrees with \eqref{eq:endpoint-eta-v3}. For all sufficiently
large $n$,
$\tau_n^{\mathrm{end}}\le1$. Since $Q_{ii}=1$ and each row has only three
off-diagonal positions, \eqref{eq:endpoint-eta-v3} gives the explicit row
bound
\begin{equation}
  \sum_jQ_{ij}
  \le1+3\tau_n^{\mathrm{end}}.
  \label{eq:explicit-endpoint-row-bound-v3}
\end{equation}
Indeed, for $i\ne j$ one has
$Q_{ij}\le(\tau_n^{\mathrm{end}})^{d_{ij}}/d_{ij}!
\le\tau_n^{\mathrm{end}}$.

Repeating the two arithmetic--geometric-mean sums in the proof of
Proposition~\ref{prop:endpoint-table-sum-v3}, now with the explicit constant in
\eqref{eq:explicit-endpoint-row-bound-v3}, gives
\begin{equation}
  \sum_{L\ \mathrm{feasible}}W(L)
  \le
  (1+3\tau_n^{\mathrm{end}})^K
  (1+\varepsilon_n^{\mathrm{pd}}).
  \label{eq:explicit-endpoint-table-sum-v3}
\end{equation}
Combining this with the single aggregate deficit reduction
\eqref{eq:coarse-realized-table-reduction-v3}, define
\begin{equation}
\begin{split}
  \Gamma_n^{\mathrm{skel}}
  :={}&
  K\log\!\left(1+(\alpha+1)\rho_{\Sigma}\right)\\
  &+K\log\!\left(1+3\tau_n^{\mathrm{end}}\right)
  +\log\!\left(1+\varepsilon_n^{\mathrm{pd}}\right).
\end{split}
  \label{eq:explicit-skeleton-ledger-v3}
\end{equation}
With the notation $M=P$, the skeleton sum in
\eqref{eq:exact-attachment-decomposition-v3} is precisely the
$\Sigma_n^{\mathrm{hi}}$ defined in Section~8. For all sufficiently
large $n$,
\begin{equation}
  \Sigma_n^{\mathrm{hi}}
  \le e^{\Gamma_n^{\mathrm{skel}}}.
  \label{eq:explicit-bare-skeleton-bound-v3}
\end{equation}
The three terms in $\Gamma_n^{\mathrm{skel}}$ respectively bound the ambient
deficit contribution, the off-diagonal endpoint-table contribution, and the
partial-diagonal reference mass.

The phase estimates from Section~8 imply
\[
\begin{split}
  K\log\!\left(1+(\alpha+1)\rho_{\Sigma}\right)
  &\le K(\alpha+1)\rho_{\Sigma}
    =O(n^{3/4}\log n),\\
  K\log\!\left(1+3\tau_n^{\mathrm{end}}\right)
  &\le3K\tau_n^{\mathrm{end}}
    =O(\sqrt{n\log n}),\\
  \log(1+\varepsilon_n^{\mathrm{pd}})&=o(1).
\end{split}
\]
Consequently the deterministic normalized skeleton error
\begin{equation}
  \varepsilon_n^{\mathrm{skel}}
  :=
  \frac{(\log n)^4}{n}\Gamma_n^{\mathrm{skel}}
  \longrightarrow0.
  \label{eq:normalized-skeleton-error-v3}
\end{equation}

Let $C_{\mathrm{att}}$ be the constant from
Proposition~\ref{prop:uniform-residual-attachment-v3}.
Define
\begin{equation}
  \Gamma_n^{\mathrm{att}}
  :=
  \max\!\left\{
    C_{\mathrm{att}}U^2,
    \frac{\log 2}{2}U2^{U/3}
  \right\}.
  \label{eq:attachment-ledger-v3}
\end{equation}
Proposition~\ref{prop:uniform-residual-attachment-v3} implies the uniform
bound
\begin{equation}
  \mathcal A(M,j)\le e^{\Gamma_n^{\mathrm{att}}}
  \label{eq:uniform-attachment-exponential-v3}
\end{equation}
for every feasible high skeleton. In the intrinsic regime this is precisely
\eqref{eq:intrinsic-attachment-v3}. In the complementary regime,
$m_0<2^{U/3}$ and
\[
  2^{Um_0/2}
  =\exp\!\left(\frac{\log 2}{2}Um_0\right)
  \le\exp\!\left(\frac{\log 2}{2}U2^{U/3}\right).
\]
The zero-residual case contributes one.

Since
\[
  U=O(\log n),
  \qquad
  2^U=\Theta\!\left(\frac{n^2}{(\log n)^2}\right),
\]
one has
\[
  \Gamma_n^{\mathrm{att}}
  =O\!\left(
    (\log n)^2+n^{2/3}(\log n)^{1/3}
  \right)
  =o\!\left(\frac{n}{(\log n)^4}\right).
\]
Thus the deterministic normalized attachment error
\begin{equation}
  \varepsilon_n^{\mathrm{att}}
  :=
  \frac{(\log n)^4}{n}\Gamma_n^{\mathrm{att}}
  \longrightarrow0.
  \label{eq:normalized-attachment-error-v3}
\end{equation}

\Needspace{16\baselineskip}
\begin{proposition}[Explicit normalized second-moment bound]
\label{prop:explicit-normalized-second-moment-ledger-v3}
For the selected signed-profile witness count
$Z=Z_{\mathbf k}^{\mathrm{sgn}}$, define
\begin{equation}
  \Lambda_n
  :=
  \Gamma_n^{\mathrm{skel}}+\Gamma_n^{\mathrm{att}}
  =
  \left(
    \varepsilon_n^{\mathrm{skel}}
    +\varepsilon_n^{\mathrm{att}}
  \right)
  \frac{n}{(\log n)^4}.
  \label{eq:explicit-lambda-ledger-v3}
\end{equation}
Then
\[
  \Lambda_n=o\!\left(\frac{n}{(\log n)^4}\right)
\]
and
for all sufficiently large $n$,
\begin{equation}
  1
  \le
  \frac{\mathbb E Z^2}{(\mathbb E Z)^2}
  \le e^{\Lambda_n}.
  \label{eq:explicit-second-moment-ledger-v3}
\end{equation}
\end{proposition}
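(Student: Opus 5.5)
The argument only assembles results already proved; no new estimate is needed. I will begin with the exact canonical decomposition \eqref{eq:exact-attachment-decomposition-v3} of Proposition~\ref{prop:canonical-exact-overlap-decomposition-v3}. It expresses $\mathbb E Z^2/(\mathbb E Z)^2$ as a finite sum, over feasible canonical high skeletons $(M,j)$, of the nonnegative products $w_{\mathrm{hi}}(M,j)\,\mathcal A(M,j)$. The uniform attachment bound \eqref{eq:uniform-attachment-exponential-v3} gives $\mathcal A(M,j)\le e^{\Gamma_n^{\mathrm{att}}}$ for every feasible skeleton, covering the zero-residual, intrinsic, and complementary regimes. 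I will pull this factor out of the sum. What remains is $\sum_{(M,j)}w_{\mathrm{hi}}(M,j)=\Sigma_n^{\mathrm{hi}}$, and by \eqref{eq:explicit-bare-skeleton-bound-v3} this is at most $e^{\Gamma_n^{\mathrm{skel}}}$ for all sufficiently large $n$. Multiplying the two bounds gives the upper inequality in \eqref{eq:explicit-second-moment-ledger-v3} with $\Lambda_n=\Gamma_n^{\mathrm{skel}}+\Gamma_n^{\mathrm{att}}$.

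The lower bound $1\le \mathbb E Z^2/(\mathbb E Z)^2$ is Cauchy--Schwarz (equivalently Jensen), $(\mathbb E Z)^2\le\mathbb E Z^2$. This needs only $\mathbb E Z>0$, which holds by (5.19), and it is uniform in the phase.

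For the order of $\Lambda_n$, the second equality in \eqref{eq:explicit-lambda-ledger-v3} holds by the definitions \eqref{eq:normalized-skeleton-error-v3} and \eqref{eq:normalized-attachment-error-v3}. Both normalized errors tend to zero. On the skeleton side the ledger terms are $O(n^{3/4}\log n)$, $O(\sqrt{n\log n})$, and $o(1)$, by \eqref{eq:rho-sum-small-v3}, \eqref{eq:endpoint-eta-v3}, and the partial-diagonal estimate (7.7). On the attachment side the bound is $O((\log n)^2+n^{2/3}(\log n)^{1/3})$, using $2^U=\Theta(n^2/(\log n)^2)$. Each of these is $o(n/(\log n)^4)$, so $\Lambda_n=o(n/(\log n)^4)$ uniformly in the phase.

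The only point that needs care is the bookkeeping of eventuality thresholds. The conditions $\alpha>8$, $\rho_\Sigma\le1$, $\tau_n^{\mathrm{end}}\le1$, $U\ge2$, and the thresholds from Section~7 must all be combined into one phase-independent threshold. Each was established with phase-independent constants, so I will take their maximum. The sum over skeletons is finite and its terms are nonnegative, so factoring out the uniform supremum of $\mathcal A$ is legitimate.
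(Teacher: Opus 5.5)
Your proposal is correct and follows the paper's proof essentially step for step. You insert the uniform attachment bound into the exact canonical decomposition and factor $e^{\Gamma_n^{\mathrm{att}}}$ out of the finite nonnegative skeleton sum, bound $\Sigma_n^{\mathrm{hi}}$ by $e^{\Gamma_n^{\mathrm{skel}}}$, obtain the lower bound from $(\mathbb E Z)^2\le\mathbb E Z^2$ (the paper phrases this as $\operatorname{Var}(Z)\ge0$), and read off the order of $\Lambda_n$ from the two normalized error sequences.
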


\begin{proof}
Insert \eqref{eq:uniform-attachment-exponential-v3} into the exact decomposition
\eqref{eq:exact-attachment-decomposition-v3}. Because all summands are
nonnegative, the uniform attachment factor may be taken outside the finite
skeleton sum. Equation~\eqref{eq:explicit-bare-skeleton-bound-v3} then gives
\[
  \frac{\mathbb E Z^2}{(\mathbb E Z)^2}
  \le
  e^{\Gamma_n^{\mathrm{att}}}
  \Sigma_n^{\mathrm{hi}}
  \le
  e^{\Gamma_n^{\mathrm{att}}+\Gamma_n^{\mathrm{skel}}}
  =e^{\Lambda_n}.
\]
The lower bound follows from $\operatorname{Var}(Z)\ge0$. Equations
\eqref{eq:normalized-skeleton-error-v3} and
\eqref{eq:normalized-attachment-error-v3} give the stated order of $\Lambda_n$.
\end{proof}

Thus $\Gamma_n^{\mathrm{skel}}$ collects the bounds for partial diagonals,
deficits, and the endpoint-table comparison, whereas
$\Gamma_n^{\mathrm{att}}$ bounds the residual local rewards and the
cycle-space factor. The amplification step uses this separation.
\section{Rare-event amplification}\label{rare-event-amplification}

Proposition~\ref{prop:explicit-normalized-second-moment-ledger-v3} and (1.4)
give the seed

\[
 \Prob{Z_{\mathbf k}^{\mathrm{sgn}}>0}\ge e^{-\Lambda_n}.       \tag{10.1}
\]

The event $Z_{\mathbf k}^{\mathrm{sgn}}>0$ yields a signed witness and hence a
cocoloring with $k_{\mathrm{co}}$ classes, so the seed implies

\[
 \Prob{\zeta(G_n)\le k_{\mathrm{co}}}\ge e^{-\Lambda_n}.        \tag{10.2}
\]

This event may still be rare. We turn it into a high-probability cocoloring
event by adding the quantity displayed in (10.5), which is later shown to be
$o(n/(\log n)^3)$ and hence smaller than the root separation. The
argument follows the seed-to-typical principle of
\citet[Theorem~1]{heckel-2025-difference}, using vertex-exposure concentration
as in \citet[Theorem~1]{scott-2008-2017}. The first lemma controls every
possible leftover vertex set simultaneously; the second amplifies an arbitrary
seed exponent $\Lambda_n$.

\begin{lemma}[Simultaneous leftover coloring]
\label{lemma-10.1-simultaneous-leftover-coloring}

There is an absolute \(C_0\) such that, with probability \(1-o(1)\), every
\(S\subseteq[n]\) satisfies

\[
 \chi(G_n[S])\le C_0\frac{|S|}{{\log n}}+n^{1/3}.               \tag{10.3}
\]

\end{lemma}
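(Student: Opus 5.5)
The plan is to split $S$ by size at the threshold $n^{1/3}$ and use a greedy coloring that repeatedly removes large independent sets. The key input is a uniform lower bound on the independence number of every reasonably large induced subgraph, obtained by a first-moment union bound over vertex sets.

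\textbf{Step 1: uniform Ramsey-type bound.} Put $t:=\lceil c_1\log n\rceil$ with a small absolute $c_1$, say $c_1=1/10$. For a fixed set $T$ of size $w\ge n^{1/3}$, the graph $G_n[T]\sim G(w,1/2)$. The probability that it has no independent set of size $t$ is at most $\exp(-c\,w^2/t^2)$ for an absolute $c>0$. This follows from Janson's inequality (or Talagrand), because $t\le 3\log w$ lies well below $2\log_2 w$ and so the expected number of $t$-independent sets is huge. A union bound over all $\binom nw\le e^{w\log n}$ sets $T$ then gives
\[
  \sum_{w\ge n^{1/3}}\exp\!\left(w\log n-c\frac{w^2}{(\log n)^2}\right)=o(1),
\]
since $w\ge n^{1/3}\gg(\log n)^3$. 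Consequently, with high probability every $T\subseteq[n]$ with $|T|\ge n^{1/3}$ contains an independent set of size at least $t$.

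\textbf{Step 2: greedy coloring.} Work on this event and fix any $S$. While the current set $T\subseteq S$ has $|T|\ge n^{1/3}$, remove an independent set of size $t$ from it and use that set as a color class. This produces at most $|S|/t\le C_0|S|/\log n$ classes. Once fewer than $n^{1/3}$ vertices remain, color each of them with its own class, which adds at most $n^{1/3}$ further classes. Summing the two phases gives (10.3) simultaneously for every $S$, because the only random input is the single high-probability event from Step 1.

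\textbf{Main obstacle.} The main obstacle is getting a sufficiently strong exponent in the Step~1 tail bound. It must be $w^2/\mathrm{polylog}$ so that it beats the $w\log n$ entropy of the union bound, and it must hold uniformly down to $w=n^{1/3}$.

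To obtain it, I would apply Janson's inequality to the count of independent $t$-sets in $G(w,1/2)$. The mean is $\mu=\binom wt2^{-\binom t2}$. The quantity $\Delta$ sums over pairs of such sets sharing at least two vertices. With $t\approx 0.1\log n\le 0.3\log w\cdot\log 2$-scale, $\mu$ is of order $w^{t(1-o(1))}$, and the standard computation gives $\mu^2/\Delta\ge c\,w^2/t^4$. This yields the tail bound
\[
  \exp\!\left(-\frac{c\,w^2}{(\log n)^4}\right),
\]
which still dominates $w\log n$ as soon as $w\ge n^{1/3}\gg(\log n)^5$.
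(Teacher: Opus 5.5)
Your proof is correct, but it gets the key uniform input by a different route from the paper's. The final greedy peeling step is the same in both; only the source of the $\Theta(\log n)$ independent sets differs.

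\textbf{Your route.} You use Janson's inequality and a union bound over all $w$-subsets. This shows that, with high probability, every set of size at least $n^{1/3}$ contains an independent set of size $\lceil 0.1\log n\rceil$.

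\textbf{The paper's route.} It avoids correlation inequalities entirely. It uses only the binomial tail (1.5) and a union bound over $\lceil n^{1/4}\rceil$-sets to show that every such set has complement-density at least $1/4$. Averaging extends this to all larger sets. It then works deterministically: repeatedly passing to the complement-neighbourhood of a maximum-degree vertex produces a clique of the complement with $c\log n$ vertices (with $c\approx 1/(13\log 4)$) inside any set of size at least $n^{1/3}$.

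\textbf{Comparison.} The paper's argument stays inside the elementary toolbox listed in Section~1. Yours uses a heavier tool but yields a cleaner and stronger intermediate statement; the same computation would allow $t$ up to $(2-\varepsilon)\log_2 w$. That is more than (10.3) needs.

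A few slips to fix when you write it out; none affects validity.
\begin{itemize}
\item Since $w\ge n^{1/3}$, you have $t\le 0.3\log w+1$, not $t\le 3\log w$. The latter exceeds $2\log_2 w$, so the sentence as written is false.
\item In Step~1, Janson gives an exponent of order $w^2/t^4$, not $w^2/t^2$. You use the correct $t^4$ later, and it suffices.
\item $\mu$ is $w^{\Theta(t)}$ (roughly $w^{0.9t}$ at $w=n^{1/3}$), not $w^{t(1-o(1))}$.
\end{itemize}

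The one computation worth displaying is the $\Delta/\mu^2$ estimate. Write $f(i)$ for the overlap-$i$ term of $\Delta/\mu^2$. Then
\[
  \frac{f(i+1)}{f(i)}=\frac{(t-i)^2\,2^{i}}{(i+1)(w-2t+i+1)}\le\frac{t^2 2^{t}}{w-2t}=O\!\left(t^2w^{-0.79}\right)\qquad(i<t).
\]
So the $i=2$ term, of order $t^4/w^2$, dominates. Since $\mu\gg w^2$, you have $\Delta\gg\mu$, and the extended form $\exp(-\mu^2/(2\Delta))$ applies. Finally, it is enough to union bound over sets of size exactly $\lceil n^{1/3}\rceil$, because every larger set contains one.
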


\begin{proof}
Let \(H\) be the complement of \(G_n\). Put
\(u_0=\lceil n^{1/4}\rceil\). For any fixed \(u_0\)-set, (1.5)
gives probability
\(\exp(-\Omega(u_0^2))\)
that its \(H\)-edge density is below \(1/4\). There are at
most \(\binom n{u_0}\) such sets, and for an absolute \(c>0\),

\[
 \binom n{u_0}e^{-cu_0^2}
 \le\exp\{u_0\log(\mathrm{e} n/u_0)-cu_0^2\}=o(1).
\]

Thus a union bound shows that, with
probability \(1-o(1)\), every \(u_0\)-set has density at
least \(1/4\). If \(S\) has size \(s\ge u_0\), double-counting each edge of
\(H[S]\) over the \(u_0\)-subsets containing it gives
\[
 \frac{1}{\binom{s}{u_0}}
 \sum_{\substack{T\subseteq S\\|T|=u_0}}
 \frac{e_H(T)}{\binom{u_0}{2}}
 =\frac{e_H(S)}{\binom{s}{2}}.
\]
Therefore every larger set also has density at least \(1/4\).

Let $S_0$ be any vertex set of size at least \(n^{1/3}\). While
$|S_t|\ge u_0$, choose $v_t\in S_t$ of maximum degree in $H[S_t]$, define
$S_{t+1}:=N_H(v_t)\cap S_t$, and put $s_t:=|S_t|$.
Density at least \(1/4\) implies
$|N_H(v_t)\cap S_t|\ge(s_t-1)/4$. Hence

\[
 s_{t+1}\ge(s_t-1)/4,\qquad s_t\ge4^{-t}s_0-1/3.         \tag{10.3a}
\]

The sets are nested and $S_{t+1}\subseteq N_H(v_t)$. Thus, whenever $j>t$,
\[
  v_j\in S_j\subseteq S_{t+1}\subseteq N_H(v_t).
\]
Every later choice is therefore adjacent in $H$ to every earlier one, so the
chosen vertices form a clique in $H$. Since \(s_0\ge\lceil n^{1/3}\rceil\), for every
\(0\le t\le\lfloor {\log n}/(13\log4)\rfloor\), equation (10.3a) gives
\[
 s_t\ge n^{1/3-1/13}-\frac13
      =n^{10/39}-\frac13
      \ge n^{1/4}
\]
for all sufficiently large \(n\). Thus the procedure constructs
at least \(c\log n\) vertices for an absolute \(c>0\), and these vertices form
an independent set in \(G_n\).

For an arbitrary \(S\), repeatedly remove such independent sets and
give each a new color until fewer than \(n^{1/3}\) vertices
remain; color the rest singly.  Every removed set has at least
\(c\log n\) vertices, so the total number of colors is at most

\[
 \frac{|S|}{c\log n}+n^{1/3}.
\]

After enlarging the absolute constant $C_0$, this is (10.3), simultaneously for
every \(S\).
\end{proof}

\begin{lemma}[Amplification from a seed]
\label{lemma-10.2-amplification-from-a-seed}

There is an absolute constant $C>0$ and a deterministic sequence
$\varepsilon_n^{\mathrm{left}}\ge0$ with
$\varepsilon_n^{\mathrm{left}}\to0$ such that the following holds. Suppose deterministic
integers $k_n\ge0$ and deterministic reals $\Lambda_n\ge0$ satisfy, for all
sufficiently large $n$,

\[
 \Prob{\zeta(G_n)\le k_n}\ge e^{-\Lambda_n}.             \tag{10.4}
\]

For every deterministic choice $r=r(n)>0$, with the same $C$ and
$\varepsilon_n^{\mathrm{left}}$ independent of $k_n,\Lambda_n,r$, the bound

\[
\begin{split}
 \mathbb{P}\!\Bigg(\zeta(G_n)>k_n+C\bigg(
 &\frac{\sqrt{n\Lambda_n}+\sqrt{nr}}{\log n}
   +n^{1/3}+1\bigg)\Bigg)\\
 &\le e^{-r}+\varepsilon_n^{\mathrm{left}}.               \end{split}
\tag{10.5}
\]
holds at every sufficiently large $n$ for which (10.4) holds.

\end{lemma}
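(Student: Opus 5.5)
We follow Heckel's seed-to-typical scheme. The argument has two steps: bounded differences for a vertex-exposure functional, then Lemma~\ref{lemma-10.1-simultaneous-leftover-coloring} to recolor the leftover.

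\textbf{The functional.} For a graph $G$ on $[n]$, let $Y(G)$ be the minimum of $|S|$ over vertex sets $S$ such that $G[[n]\setminus S]$ admits a cocoloring with at most $k_n$ classes. Use vertex exposure: block $v$ consists of the edges from $v$ to $\{1,\ldots,v-1\}$, giving $n$ independent blocks. Changing one block changes $Y$ by at most one, because adding $v$ to $S$ repairs any damage. Hence (1.3) applies with $r=n$.

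\textbf{Concentration near zero.} On the event $\{\zeta(G_n)\le k_n\}$ we have $Y=0$. By (10.4) this event has probability at least $e^{-\Lambda_n}$. The lower tail of (1.3) then forces
\[
  \mathbb E Y\le\sqrt{n\Lambda_n/2}.
\]
Otherwise $\mathbb P(Y\le0)\le\exp(-2(\mathbb E Y)^2/n)<e^{-\Lambda_n}$, a contradiction. The upper tail with $t=\sqrt{nr/2}$ gives
\[
  \mathbb P\!\left(Y\ge\sqrt{n\Lambda_n/2}+\sqrt{nr/2}\right)\le e^{-r}.
\]
Only the one-sided bounds are needed, so the factor $2$ in (1.3) can be dropped.

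\textbf{Recoloring the leftover.} Intersect with the event of Lemma~\ref{lemma-10.1-simultaneous-leftover-coloring}; its failure probability defines $\varepsilon_n^{\mathrm{left}}$, which is independent of $k_n$, $\Lambda_n$ and $r$. On the intersection, take the minimizing $S$ and cocolor $[n]\setminus S$ with at most $k_n$ classes. The set $S$ is chosen as a function of $G_n$, but the lemma holds simultaneously for every $S$. So we may properly color $G_n[S]$, which is a cocoloring, using at most $C_0|S|/\log n+n^{1/3}$ classes.

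This yields
\[
  \zeta(G_n)\le k_n+C_0\frac{\sqrt{n\Lambda_n}+\sqrt{nr}}{\log n}+n^{1/3}
\]
outside an event of probability at most $e^{-r}+\varepsilon_n^{\mathrm{left}}$. Taking $C=\max\{C_0,1\}$ absorbs all constants, and the $+1$ term covers rounding. There is no real obstacle here. The only point needing care is that the leftover set is random, and the simultaneity of (10.3) over all $S$ handles exactly that.
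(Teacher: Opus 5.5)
Your proposal is correct and follows essentially the same route as the paper. Your $Y$ is exactly $n-S_{k_n}$ for the paper's functional (10.6), and the argument then runs identically: vertex-exposure bounded differences, one tail to bound the mean via the seed and the other to give the $e^{-r}$ deviation, followed by the simultaneous leftover coloring of Lemma~\ref{lemma-10.1-simultaneous-leftover-coloring} applied to the random leftover set. The only differences are cosmetic and do not affect the argument: you use $n$ exposure blocks instead of $n-1$, and you reuse the letter $r$ both for the number of blocks in (1.3) and for the deterministic tail parameter.
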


\begin{proof}
Let $\mathcal G_n$ be the simultaneous event in
Lemma~\ref{lemma-10.1-simultaneous-leftover-coloring}, and set
$\varepsilon_n^{\mathrm{left}}:=\mathbb P(\mathcal G_n^c)=o(1)$. This sequence is independent
of $k_n$, $\Lambda_n$, and $r$.

For $W\subseteq[n]$, let
$\zeta(W)=\zeta(G_n[W])$, with $\zeta(\varnothing)=0$, and define

\[
 S_{k_n}=\max\{|W|:W\subseteq[n],\ \zeta(W)\le k_n\}.  \tag{10.6}
\]

Expose the random graph in \(n-1\) independent vertex blocks, where
block \(v\) contains the edges from \(v\) to earlier vertices.
Changing one block changes $S_{k_n}$ by at most one. Indeed, after deleting
the affected vertex, every feasible induced set loses at most one vertex,
and the two graph configurations agree on all remaining edges. Thus a
maximizer in either configuration yields a feasible set of size at least one
less in the other configuration. Therefore (1.3) applies.

\Needspace{12\baselineskip}
Since $S_{k_n}=n$ exactly when $\zeta(G_n)\le k_n$,
(10.4) and the upper one-sided bounded-differences tail give

\[
 e^{-\Lambda_n}
 \le\Prob{S_{k_n}-\Exp{S_{k_n}}\ge n-\Exp{S_{k_n}}}
 \le\exp\!\left\{-\frac{2(n-\Exp{S_{k_n}})^2}{n-1}\right\}.
\]

Taking logarithms and rearranging gives

\[
 n-\Exp{S_{k_n}}\le\sqrt{(n-1)\Lambda_n/2}.              \tag{10.7}
\]

The lower tail with radius \(\sqrt{(n-1)r/2}\) gives, outside an event of
probability at most \(e^{-r}\),

\[
 n-S_{k_n}\le
 \sqrt{(n-1)\Lambda_n/2}+\sqrt{(n-1)r/2}.                \tag{10.8}
\]

\Needspace{7\baselineskip}
Choose a maximizing set \(W\) and put
\(V_{\mathrm{left}}=[n]\setminus W\). Combining a cocoloring of \(G_n[W]\)
using at most \(k_n\) parts with an ordinary coloring of
\(G_n[V_{\mathrm{left}}]\) gives

\[
 \zeta(G_n)\le k_n+\chi(G_n[V_{\mathrm{left}}]).         \tag{10.9}
\]

On $\mathcal G_n$ and the event in (10.8), apply (10.3) to
$V_{\mathrm{left}}$. Since $|V_{\mathrm{left}}|=n-S_{k_n}$, substitution into
(10.9), followed by an enlargement of the absolute constant $C$, gives the
number of additional parts displayed in (10.5). A union bound gives the
failure probability $e^{-r}+\varepsilon_n^{\mathrm{left}}$. Thus (10.5) holds
at every sufficiently large $n$ satisfying (10.4), while $C$ and
$\varepsilon_n^{\mathrm{left}}$ are independent of the three deterministic
parameters.
\end{proof}

Fix the absolute constant $C$ and the sequence $\varepsilon_n^{\mathrm{left}}$ from
Lemma~\ref{lemma-10.2-amplification-from-a-seed}, and apply that lemma to
(10.2). Put

\[
 r_n=\frac{\sqrt n}{(\log n)^2}.                              \tag{10.10}
\]

From $\Lambda_n=o(n/(\log n)^4)$, (10.10), and elementary asymptotics,
$r_n\to\infty$ and

\[
 \frac{\sqrt{n\Lambda_n}}{\log n}=o(n/(\log n)^3),\qquad
 \frac{\sqrt{nr_n}}{\log n}=o(n/(\log n)^3),\qquad
 n^{1/3}=o(n/(\log n)^3).                                       \tag{10.11}
\]

Define the deterministic sequence

\[
 a_n=C\left(
 \frac{\sqrt{n\Lambda_n}+\sqrt{nr_n}}{\log n}+n^{1/3}+1\right).
                                                               \tag{10.12}
\]

Then (10.11) and
Lemma~\ref{lemma-10.2-amplification-from-a-seed} give

\[
 a_n=o(n/(\log n)^3),\qquad
 \Prob{\zeta(G_n)>k_{\mathrm{co}}+a_n}
 \le e^{-r_n}+\varepsilon_n^{\mathrm{left}}\longrightarrow0. \tag{10.13}
\]

\subsection{Uniformity across the phase}
\label{subsec:uniformity-phase-v3}

Every constant in the root and first-moment estimates was chosen on the compact
set $K_*$, and the errors in Sections~5 and~7 were replaced by deterministic
phase-independent envelopes.  The three partial-diagonal ranges and the three
residual regimes are disjoint and exhaustive.  After finitely many additional
eventual conditions---among them $\alpha>8$, $U\ge2$,
$\rho_{\Sigma}\le1$, and $\tau_n^{\mathrm{end}}\le1$---the bare-skeleton bound
holds for the total high-skeleton sum of the selected four-size profile, while
the attachment bound holds uniformly over every feasible canonical high
skeleton. Finally,
$\mathcal G_n$ controls all leftover vertex sets simultaneously, while the
constants in Lemma~\ref{lemma-10.2-amplification-from-a-seed} are independent
of $k_n$, $\Lambda_n$, and $r$.  Taking the maximum of these finitely many
thresholds gives one deterministic threshold for the final argument.

\section{Final assembly and the quantitative constant}
\label{sec:final-assembly-v3}

We combine the chromatic lower location, the signed cocoloring upper location,
and the amplification estimate.

\subsection{The phase-resolved gap}

\begin{proof}[Proof of the main theorem]

Take $n$ beyond the common deterministic eventuality threshold of
Section~10.1.
Let $r_+(n)$ be the ordinary first-moment root and let
$r_4^{\mathrm{co}}(n)$ be the signed four-size root.
Section~\ref{the-four-size-signed-first-moment-advantage} gives
\begin{equation}
  r_+(n)-r_4^{\mathrm{co}}(n)
  =
  \left[
    \frac{(\log 2)^2}{4}A_4(\delta_n)+o(1)
  \right]
  \frac{n}{(\log n)^3}.
  \label{eq:phase-resolved-root-gap-v3}
\end{equation}
The signed profile is placed a fixed distance above its first-moment root:
\[
  k_{\mathrm{co}}
  =
  \left\lceil r_4^{\mathrm{co}}(n)\right\rceil+16.
\]
At this fixed value of $k_{\mathrm{co}}$, the tangent correction in
Section~\ref{the-four-size-signed-first-moment-advantage} changes only the four
type multiplicities by $O(1)$. Here $i\in\{2,3,4,5\}$,
$u_i=\alpha-i$, and $k_i$ is the number of classes of size $u_i$. The
correction preserves exactly both
\[
  \sum_i k_i=k_{\mathrm{co}}
  \qquad\text{and}\qquad
  \sum_i u_i k_i=n.
\]
Thus there is no additional correction to the total number of classes. Hence
\begin{equation}
  r_+(n)-k_{\mathrm{co}}
  =
  \left[
    \frac{(\log 2)^2}{4}A_4(\delta_n)+o(1)
  \right]
  \frac{n}{(\log n)^3}.
  \label{eq:near-root-retained-gap-v3}
\end{equation}
Indeed, (5.13) gives
$r_+(n)-k_{\mathrm{co}}
=(r_+(n)-r_4^{\mathrm{co}})-16+O(1)$.
The fixed displacement and the ceiling are absorbed by the displayed $o(1)$;
the tangent correction preserves $k_{\mathrm{co}}$ itself.

Section~\ref{a-valid-unrestricted-lower-location-for-chi} constructs a
deterministic integer $k_\chi^-$ such that
\[
  \mathbb P\bigl(\chi(G_n)>k_\chi^-\bigr)\longrightarrow1
\]
and $k_\chi^-=r_+(n)+O(\log n)$. Combining this with
\eqref{eq:near-root-retained-gap-v3} gives
\begin{equation}
  k_\chi^- - k_{\mathrm{co}}
  =
  \left[
    \frac{(\log 2)^2}{4}A_4(\delta_n)+o(1)
  \right]
  \frac{n}{(\log n)^3}.
  \label{eq:integer-location-gap-v3}
\end{equation}

The amplification result (10.13) gives a deterministic sequence
$a_n=o(n/(\log n)^3)$ such that
\[
  \mathbb P\bigl(\zeta(G_n)\le k_{\mathrm{co}}+a_n\bigr)
  \longrightarrow1.
\]
On this event and the chromatic lower event,
\[
  \chi(G_n)-\zeta(G_n)
  >k_\chi^- -k_{\mathrm{co}}-a_n.
\]
The deterministic locations used in this comparison are collected in
Figure~\ref{fig:final-root-geometry-v3}.

\begin{figure}[htbp]
\centering
\begin{tikzpicture}[
  x=1cm,
  y=1cm,
  >=Latex,
  every node/.style={font=\scriptsize}
]
  \draw[-{Latex[length=2mm]}] (-0.35,0) -- (12.0,0);
  \draw[blue!55!black,line width=1.6pt] (2.0,0) -- (10.6,0);

  \foreach \x in {0,1.0,2.0,10.6,11.5}
    \draw[line width=0.55pt] (\x,-0.10) -- (\x,0.10);

  \node[below=3pt] at (0,0) {$r_4^{\mathrm{co}}$};
  \node[above=3pt] at (1.0,0) {$k_{\mathrm{co}}$};
  \node[below=3pt] at (2.0,0) {$k_{\mathrm{co}}+a_n$};
  \node[above=3pt] at (10.6,0) {$k_\chi^-$};
  \node[below=3pt] at (11.5,0) {$r_+$};
  \node[text=blue!55!black,fill=white,inner sep=1.5pt] at (6.3,0.42)
    {usable separation};

  \draw[<->] (0,1.35) -- (11.5,1.35);
  \node[above=2pt,font=\scriptsize] at (5.75,1.35) {$\displaystyle
    r_+-r_4^{\mathrm{co}}
    =\left[\frac{(\log 2)^2}{4}A_4(\delta_n)+o(1)\right]
      \frac{n}{(\log n)^3}$};

  \draw[<->] (1.0,-1.30) -- (11.5,-1.30);
  \node[below=2pt,font=\scriptsize] at (6.25,-1.30) {$\displaystyle
    r_+-k_{\mathrm{co}}
    =\left[\frac{(\log 2)^2}{4}A_4(\delta_n)+o(1)\right]
      \frac{n}{(\log n)^3}$};
\end{tikzpicture}
\caption{Root geometry after rounding and amplification. For all sufficiently
large $n$,
$r_4^{\mathrm{co}}<k_{\mathrm{co}}<k_{\mathrm{co}}+a_n<k_\chi^-<r_+$.
Here $r_4^{\mathrm{co}}$ and $r_+$ are analytic roots,
$k_{\mathrm{co}}=\lceil r_4^{\mathrm{co}}\rceil+16$ is the number of classes
in the signed witness, and $k_{\mathrm{co}}+a_n$ and $k_\chi^-$ are the
deterministic locations used in the two probability events.
The thick segment is the usable separation;
$a_n=o(n/(\log n)^3)$ and
$r_+-k_\chi^-\in[\log n,\log n+2)$. The horizontal positions are schematic
and not to scale.}
\label{fig:final-root-geometry-v3}
\end{figure}

To make the final error explicit, put
\[
  s_n:=\frac{n}{(\log n)^3},\qquad
  \theta_n:=\frac{k_\chi^- - k_{\mathrm{co}}}{s_n}
  -\frac{(\log 2)^2}{4}A_4(\delta_n),\qquad
  \eta_n:=\frac{a_n}{s_n}.
\]
By \eqref{eq:integer-location-gap-v3} and (10.13),
$\theta_n\to0$ and $\eta_n\to0$.  Hence the nonnegative deterministic error
\[
  \varepsilon_n^{\mathrm{gap}}:=|\theta_n|+\eta_n
\]
tends to zero. On the event
\[
  \{\chi(G_n)>k_\chi^-\}
  \cap
  \{\zeta(G_n)\le k_{\mathrm{co}}+a_n\},
\]
\[
  \chi(G_n)-\zeta(G_n)
  > k_\chi^- - k_{\mathrm{co}}-a_n
  \ge
  \left[
    \frac{(\log 2)^2}{4}A_4(\delta_n)
    -\varepsilon_n^{\mathrm{gap}}
  \right]s_n.
\]
By (4.6) and (10.13), a union bound shows that this intersection has
probability $1-o(1)$. Consequently,
\begin{equation}
  \mathbb P\!\left(
    \chi(G_n)-\zeta(G_n)
    \ge
    \left[
      \frac{(\log 2)^2}{4}A_4(\delta_n)
      -\varepsilon_n^{\mathrm{gap}}
    \right]
    \frac{n}{(\log n)^3}
  \right)
  \longrightarrow1.
  \label{eq:phase-resolved-final-v3}
\end{equation}

On this tail, set $\varepsilon_n:=\varepsilon_n^{\mathrm{gap}}$ and define the
finitely many preceding terms arbitrarily as nonnegative reals. Thus
$(\varepsilon_n)_{n\ge2}$ is deterministic, nonnegative, and tends to zero.

Put $q:=\log 2$ and $\gamma_4:=\log(200/153)>0$.
The function $A_4$ is continuous on $[0,1]$, and
Lemma~\ref{lem:entropy-gap} gives $A_4(\delta)>\gamma_4$ at every point.
Hence compactness gives
\[
  m_4:=\min_{0\le\delta\le1}A_4(\delta)>\gamma_4.
\]
Since $\varepsilon_n^{\mathrm{gap}}\to0$, eventually
\[
  \varepsilon_n^{\mathrm{gap}}
  <\frac{q^2}{4}(m_4-\gamma_4).
\]
Therefore the coefficient in \eqref{eq:phase-resolved-final-v3} is at least
$q^2\gamma_4/4$ for all sufficiently large $n$.
It follows that
\[
  \mathbb P\!\left(
    \chi(G_n)-\zeta(G_n)
    \ge
    \frac{(\log 2)^2}{4}
    \log\!\left(\frac{200}{153}\right)
    \frac{n}{(\log n)^3}
  \right)
  \longrightarrow1.
\]
The displayed coefficient equals
\[
  \frac{(\log 2)^2}{4}
  \log\!\left(\frac{200}{153}\right)
  =0.032175871697936\ldots.
\]

This proves the main theorem. Since $n/(\log n)^3\to\infty$, the
chromatic--cochromatic difference tends to infinity with high probability
along the full sequence of integers.  We have proved the lower-bound direction
of the conjectured $n/(\log n)^3$ scale; a matching upper bound and the optimal
constant remain open.
\end{proof}

\section*{Formal verification and reproducibility}
A companion Lean~4 development kernel-checks the explicit full-sequence
uniform consequence of the main theorem~\citep{petkov-erdos625-lean}. Its
formal coefficient is
$((\log 2)^2/4)\log(200/153)$. The top-level declaration is
\texttt{Erdos625.\allowbreak erdos625}; its type is
\texttt{Erdos625.\allowbreak Erdos625Statement}. The phase-resolved refinement
involving $A_4(\delta_n)$ is proved in this manuscript and is not claimed as
part of the Lean theorem. The cited source revision pins Lean and Mathlib at
version~4.31.0 and contains the modular sources and generated single-file
closure. The cited replay revision contains the trust-boundary audit and a
clean-environment replay. Reproduction commands, checksums, and transcripts
are recorded in the companion documentation.

\paragraph{AI assistance disclosure.}
OpenAI GPT-5.6 Sol, accessed through ChatGPT and Codex, was used to probe the
written proof for counterexamples, audit symbolic consistency, and generate
candidate Lean code. Aristotle, Harmonic's
automated theorem-proving system \citep{achim-et-al-2025}, performed automated
Lean proof search and a clean-environment replay. Every formal output
incorporated into the released development was reviewed and rebuilt under the
pinned Lean toolchain. Machine-checked claims
rest on the Lean kernel, and manuscript-only claims on the written proof.
Samuil Petkov directed the work, selected the claims and arguments, reviewed
the manuscript and formalization, and assumes responsibility for the
accuracy, integrity, citations, and final submission.

\renewcommand{\bibsection}{\section*{References}}
\begingroup
\footnotesize
\bibliographystyle{plainnat}
\bibliography{references}
\endgroup

\end{document}